\documentclass[11pt]{article}

\usepackage[a4paper, total={6in, 8in}]{geometry}
\usepackage{setspace}
\usepackage{amsmath}
\usepackage{amsfonts}
\usepackage{amssymb}
\usepackage{mathtools}
\usepackage[english]{babel}
\usepackage{amsthm}
\usepackage{tikz-cd}

\usepackage{a4wide}

\usepackage{enumitem}
\usepackage{titling}

\usepackage{xcolor}

\DeclareMathOperator{\Ann}{Ann}
\DeclareMathOperator{\lAnn}{l.Ann}
\DeclareMathOperator{\rAnn}{r.Ann}
\DeclareMathOperator{\Z}{Z}
\DeclareMathOperator{\J}{J}

\DeclareMathOperator{\C}{C}
\DeclareMathOperator{\N}{N}

\DeclareMathOperator{\id}{id}

\DeclareMathOperator{\soc}{soc}

\newtheorem{corollary}{Corollary}[section]
\newtheorem{lemma}[corollary]{Lemma}
\newtheorem{theorem}[corollary]{Theorem}
\newtheorem{proposition}[corollary]{Proposition}

\theoremstyle{definition}
\newtheorem{definition}[corollary]{Definition}
\newtheorem{numberedexample}[corollary]{Example}
\newtheorem{problem}[corollary]{Problem}
\newtheorem{remark}[corollary]{Remark}

\date{}

\begin{document}
\title{On Minimal Noncommutative Rings}
\author{V. V. Bavula and N. Blacher}

\maketitle
\vspace*{-50pt}
\begin{abstract}
We study minimal noncommutative rings, that is noncommutative rings whose proper subrings and homomorphic images are all commutative. These rings were introduced by Bell and Danchev in order to test commutativity theorems. They raised the problems of describing all such rings in the finite and infinite cases. In the finite case, we give a classification into three pairwise disjoint classes, the first two of which are completely characterised. For the third class, we give a finite procedure which can produce any of (and only) the required rings. We also translate the problem into commutative algebra, in terms of finite local rings with small socle and a kind of cancellation property. Finally, we show that if an infinite minimal noncommutative ring exists, then it is a division algebra with very strange properties, and a counterexample to several longstanding conjectures.
\end{abstract}

\section{Introduction and notation}

In 1945, Jacobson \cite{X^n-X} proved that in a ring $R$ if each element $a$ satisfies $a^{n(a)}=a$ for some integer $n(a)>1$ then $R$ is commutative. This celebrated result inspired many further works on conditions forcing a ring to be commutative. Most notably, Herstein generalised Jacobson's result \cite{Herstein_GeneralisationX^n-X} and discovered many more polynomial identities which force commutativity. Different conditions were studied extensively, in particular by H. Bell. We refer the reader to \cite{PINTERLUCKE2007165} for a history of commutativity theorems.

More recently, a new approach was proposed by J. Bell and P. Danchev \cite{BellandDanchev}, inspired by the work of Belov-Kanel, Rowen and Vishne on affine representability \cite{MR4763155}. They showed \cite[Theorem~2.2]{BellandDanchev} that given sets $\mathcal{S}$ and $\mathcal{T}$ of polynomial identities, with $\mathcal{T}$ finite, if there exists a ring $R$ such that all elements of $\mathcal{S}$ are identities for $R$ and all elements of $\mathcal{T}$ are non-identities for $R$, then there exists a finite ring with the same two properties.

From the perspective of commutativity theorems, taking $\mathcal{T}=\lbrace XY-YX\rbrace$, the above result means that if there exists a noncommutative ring satisfying all the identities of a set $\mathcal{S}$, then there exists a finite noncommutative ring with the same property \cite[Theorem~2.4]{BellandDanchev}. Furthermore, subrings and homomorphic images inherit polynomial identities. This easy observation motivates the following definition.

\begin{definition}
A ring $R$ is called \textit{minimal noncommutative} if
\begin{itemize}[topsep=0pt]
\item[(1)] $R$ is not commutative, and
\item[(2)] every proper subring and homomorphic image of $R$ is commutative.
\end{itemize}
\end{definition}

Thus, to determine whether a set of polynomial identities forces the ring to be commutative, one need only check finite minimal noncommutative rings. In \cite[Theorem~2.5]{BellandDanchev} Bell and Danchev gave a coarse classification, which allowed them to produce an algorithm that decides whether a finite set of polynomial identities forces commutativity (see \cite[Section~3]{BellandDanchev}).

Their classification is divided into three classes. However, as they point out in \cite[Remark~2.6]{BellandDanchev}, not all rings that appear in these classes are minimal noncommutative, and they raise the question of giving a precise classification of finite minimal noncommutative rings. We approach this problem in Sections~\ref{FiniteCaseandIdentities},~\ref{IterativeProcedure}~and~\ref{Abelianisation}.

In Proposition~\ref{FirstTwoClasses} we refine the easier first two classes, excluding all superfluous rings. This yields the refined classification of Theorem~\ref{RefinedClassification} into three pairwise disjoint classes. Our focus then turns to the third class, for which subrings are always commutative (Proposition~\ref{SubringsInA_pCommutative}). Hence, only homomorphic images have to be considered, which leads us to presenting an iterative procedure from which any of (and only) the desired rings can be produced (Theorem~\ref{ProcedureAllAndOnly}). We use it to construct examples that illustrate the diversity found in the third class. We then describe a second approach, by considering abelianisations. We show in Theorem~\ref{MinimalRingGivesMGPair} that the classification problem can be translated entirely into commutative algebra, in terms of finite local rings with small socle and a kind of cancellation property (Proposition~\ref{IdealsInMinimalPairs}).

The question is raised in \cite[Remark~2.6]{BellandDanchev} whether a minimal noncommutative ring is necessarily finite. We study this problem in Section~\ref{InfiniteCase}. In Theorem~\ref{C22July22} we prove that an infinite minimal noncommutative ring must be a division algebra. Furthermore, we show that such a division algebra would exhibit very strange properties, which suggests that it cannot exist. It would constitute a counterexample to several important conjectures on division algebras, notably problems by Barbaumov (Remark~\ref{RemarkBarbaumov}), Latyshev (Remark~\ref{RemarkLatyshev}), 
Cohn (Proposition~\ref{AlgebraicIsCentral}), and the Makar-Limanov conjecture (Remark~\ref{RemarkMakar-Limanov}).

All rings considered in this paper are associative with $1$. Subrings of a given ring $R$ must contain the identity element of $R$. By a noncommutative ring we mean a ring which is not commutative. We write $F_q$ for the Galois field of order $q$. We denote inclusion by $\subset$ and strict inclusion by~$\subsetneq$. 

\section{Finite minimal noncommutative rings and their identities}\label{FiniteCaseandIdentities}

Our starting point is the coarse classification given by Bell and Danchev. They showed in \cite[Theorem~2.5]{BellandDanchev} that a finite minimal noncommutative ring must belong to one of the following three classes.

\begin{itemize}[topsep=0pt]
\item[(1)] For a prime $p$, the ring $U_p$ of upper triangular $2\times 2$ matrices over $F_p$, i.e.
\[U_p=\left\lbrace\begin{pmatrix}
a&b\\
0&c
\end{pmatrix}
:a,b,c\in F_p\right\rbrace.\]
\item[(2)] For a prime $p$, an integer $n\geq 2$ and $1\leq i\leq n-1$, the ring
\[B_{p,n,i}:=\left\lbrace\begin{pmatrix}
x^{p^i}&y\\
0&x
\end{pmatrix}
:x,y\in F_{p^n}\right\rbrace.\]
\item[(3)] For a prime $p$ and an integer $n\geq3$, the noncommutative homomorphic images of
\[A\coloneqq \mathbb{Z}\langle x,y\rangle/\left(I+(p,x,y)^n\right)\]
where $I=(p,x,y)[x,y]\mathbb{Z}\langle x,y\rangle+\mathbb{Z}\langle x,y\rangle[x,y](p,x,y)$.
\end{itemize}

The three classes in this classification correspond to the three classes in \cite[\S 3]{Redei_OneStepNoncommutativeFinite}, where finite non-unital noncommutative rings whose proper subrings are all commutative were classified. However, as pointed out in \cite[Remark~2.6]{BellandDanchev}, not all rings in these three classes are minimal noncommutative. They raised the question of giving a more precise classification of finite minimal noncommutative rings, which we set out to do. We start with the easier first two classes.

\begin{proposition}\label{FirstTwoClasses}
For any prime $p$, the following hold.
\begin{itemize}[topsep=0pt]
\item[(1)] The ring $U_p$ is minimal noncommutative.
\item[(2)] The ring $B_{p,n,i}$ is minimal noncommutative if and only if $n=q^m$ and $i=jq^{m-1}$ for some prime $q$ and integers $m\geq 1$ and $1\leq j\leq q-1$.
\end{itemize}
\end{proposition}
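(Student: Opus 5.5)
For (1), we will check the two conditions for $U_p$ directly. It is noncommutative because $e_{11}e_{12}=e_{12}$ while $e_{12}e_{11}=0$.

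For subrings, note that $|U_p|=p^3$ and any subring contains $F_p\cdot 1$. So a proper subring $S$ has order $p$ or $p^2$. If $|S|=p^2$, then $S=F_p+F_p a$ for some $a\notin F_p$, which is commutative. If $|S|=p$, then $S=F_p$.

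For homomorphic images, take a nonzero proper two-sided ideal $J$. The two-sided ideals of $U_p$ are $0$, $F_pe_{12}$, $\begin{pmatrix}*&*\\0&0\end{pmatrix}$, $\begin{pmatrix}0&*\\0&*\end{pmatrix}$ and $U_p$. Every nonzero ideal contains $F_pe_{12}$, since the multiples $e_{11}Je_{22}$ produce it. Hence every proper image is a quotient of $U_p/F_pe_{12}\cong F_p\times F_p$, which is commutative.

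For (2), write $B=B_{p,n,i}$ and $\sigma$ for the automorphism $x\mapsto x^{p^i}$ of $F_{p^n}$. The ring $B$ is noncommutative since $\sigma\neq\id$. Its unique nonzero proper ideal structure comes from $N=\begin{pmatrix}0&*\\0&0\end{pmatrix}$. We will show that any nonzero ideal $J$ contains $N$: pick an element of $J$ with a nonzero entry, and multiply on both sides by diagonal elements $\mathrm{diag}(\sigma(x),x)$ and by elements of $N$. Moreover $N$ is simple as a bimodule, because $\sigma(x)y$ ranges over all of $F_{p^n}$. Since $B/N\cong F_{p^n}$ is commutative, every proper homomorphic image is commutative, so the condition on images is automatic.

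The whole question therefore reduces to subrings. A subring $S$ has image $K=\pi(S)$ in $F_{p^n}$, where $K=F_{p^d}$ is a subfield with $d\mid n$. Then $S\cap N$ is a left module over $\sigma(K)=K$ and a right module over $K$, acting through $y\mapsto \sigma(x)y$ and $y\mapsto yx$ respectively.

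We claim $S$ is noncommutative iff $\sigma|_K\neq\id$ and $S\cap N\neq0$. For the forward direction, the commutator of $\mathrm{diag}(\sigma(x),x)+\cdots$ with $n_y$ is $(\sigma(x)-x)y$. For the converse, if $\sigma|_K=\id$, we need to check that $S$ is commutative; this holds after verifying that elements of $S$ with the same diagonal part differ by an element of $S\cap N$, and that $K$ acts centrally.

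The noncommutative subring $S_K=\{\mathrm{diag}(\sigma(x),x)+n_y: x\in K,\ y\in F_{p^n}\}$ exists for every $K$ on which $\sigma$ acts nontrivially. So $B$ is minimal iff $\sigma|_K=\id$ for every proper subfield $K\subsetneq F_{p^n}$, i.e. $n\mid id$ for each proper divisor $d$ of $n$. It suffices to check the maximal proper divisors $n/q$ for primes $q\mid n$.

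This is the main combinatorial step. The condition $n\mid i\,n/q$ means $q\mid i$. So we need $i$ divisible by $\prod_{q\mid n}q=\mathrm{rad}(n)$ with $1\le i\le n-1$. If $n$ has two distinct prime factors $q_1,q_2$, we can take $K=F_{p^{n/q_1}}$ and must show no valid $i$ works. In fact the condition is that $\mathrm{rad}(n)$ divides $i$, and such $i<n$ exist for non-prime-power $n$ as well, for example $n=12$, $i=6$.

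We must recheck this. Fixing $S_K$ does not capture all subrings: when $K=F_{p^n}$ itself, there are also subrings that are proper because $S\cap N$ is a proper $K$-sub-bimodule. The $F_{p^d}$-sub-bimodules of $N$ for a smaller field, together with proper intermediate subrings such as $\{\mathrm{diag}(\sigma(x),x)+n_y : x\in F_{p^n},\ y\in F_{p^e}\}$, can be closed if $\sigma(F_{p^e})\cdot F_{p^e}\subseteq F_{p^e}$. The real obstacle is to enumerate these correctly. The prime-power shape should come from requiring that every $F_{p^n}$-stable proper additive subgroup and every subfield $K$ give commutativity.

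The condition for $K=F_{p^{n/q}}$ is $q\mid i$. With $1\le i\le n-1$ and $n=q^m$, the condition "$\sigma$ has order $q$", i.e. $i=jq^{m-1}$, is what must emerge. The order of $\sigma$ is $n/\gcd(n,i)$, and the requirement becomes that $\sigma$ is trivial on all maximal subfields. This forces the fixed field of $\sigma$ to contain every maximal proper subfield. That is possible only if there is a unique maximal subfield, so $n=q^m$, and the fixed field is $F_{p^{q^{m-1}}}$, giving $i=jq^{m-1}$.

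Because the fixed field must be a proper subfield containing all maximal ones, the earlier claim $n=12$, $i=6$ is wrong: the fixed field $F_{p^6}$ does not contain $F_{p^4}$. The correct criterion is fixed-field containment, not divisibility, and I would carefully redo the $S_K$ computation with this criterion.
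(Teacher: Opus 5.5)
Your strategy for (2) is the paper's. The image condition is automatic because every nonzero ideal contains $N$ and $B/N\cong F_{p^n}$. Subrings are analysed through $K=\pi(S)$ and $S\cap N$. Necessity comes from a proper noncommutative subring over a proper subfield moved by $\sigma$; your $S_K$ works as well as the paper's $B_{p,k,i}$. Your closing fixed-field argument is correct and is a tidy substitute for the paper's choice of $k\mid n$, $k<n$, $k\nmid i$. The fixed field $F_{p^{\gcd(n,i)}}$ of $\sigma$ is proper and must contain every proper subfield, so $F_{p^n}$ has a unique maximal subfield, forcing $n=q^m$ and $\gcd(n,i)=q^{m-1}$.

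The lines before that argument are wrong, however, and should be deleted. $\sigma$ is the identity on $F_{p^d}$ iff $d\mid i$, not iff $n\mid id$. So the condition for $K=F_{p^{n/q}}$ is $(n/q)\mid i$, not $q\mid i$, and this slip survives in your last paragraph. Likewise $\mathrm{rad}(n)\mid i$ is not the criterion: $n=8$, $i=2$ satisfies it, yet $x\mapsto x^{p^2}$ moves $F_{p^4}$, so the proper subring $B_{p,4,2}\subsetneq B_{p,8,2}$ is noncommutative.

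The genuine gap is in the sufficiency direction. You assert that when $\pi(S)=F_{p^n}$ there may be proper subrings with $0\neq S\cap N\subsetneq N$, and you leave enumerating them as ``the real obstacle''. By your own criterion such an $S$ would be noncommutative, since $\sigma\neq\id$ on $F_{p^n}$. If it existed, no $B_{p,n,i}$ would be minimal.

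It does not exist, for a reason you already wrote down. $S\cap N$ is stable under left multiplication by $\sigma(x)$ for all $x\in\pi(S)=F_{p^n}$, and $\sigma$ is onto. So $S\cap N$ is an $F_{p^n}$-subspace of the one-dimensional space $N$, i.e.\ a sub-bimodule of the simple bimodule $N$. Hence either $S\cap N=0$, so $\pi|_S$ is injective and $S$ is commutative, or $S\cap N=N$ and $S=B$. This is exactly the paper's step ``$\ker(\pi)=0$ or $\ker(\pi)\cong F_{p^n}$ and $S=R$''.

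Your candidate $\{\mathrm{diag}(\sigma(x),x)+n_y : x\in F_{p^n},\ y\in F_{p^e}\}$ is not closed under multiplication for $e<n$, because $\mathrm{diag}(\sigma(x),x)\,n_1=n_{\sigma(x)}$. With the dichotomy in place, every proper subring has $\pi(S)$ a proper subfield, on which $\sigma$ is trivial under the hypothesis, and your claim finishes the proof.

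A minor point in (1): $e_{11}Je_{22}$ does not see elements of $J$ with zero $(1,2)$-entry, such as $e_{11}$. For $a\in J$ with diagonal entries $\alpha,\gamma$, use also $e_{11}ae_{12}=\alpha e_{12}$ and $e_{12}ae_{22}=\gamma e_{12}$.
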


\begin{proof}
Checking (1) is an easy exercise. We turn to (2) and first prove the condition is necessary. We will show that $B_{p,n,i}$ has a noncommutative proper subring if either $n$ is not a prime power, or $n=q^m$ but $i$ is not of the form $jq^{m-1}$. In either case, there is a natural number $1\leq k<n$ that divides $n$ but does not divide $i$. In particular, the field $F_{p^n}$ contains a proper subfield $F_{p^k}$. Then the ring $B_{p,k,i}=\left\lbrace\begin{pmatrix}
x^{p^i}&y\\
0&x
\end{pmatrix}
:x,y\in F_{p^k}\right\rbrace$ is a proper subring of $B_{p,n,i}$. We will show that $B_{p,k,i}$ is not commutative. Considering the commutator
\[\left[\begin{pmatrix}
x^{p^i}&0\\
0&x
\end{pmatrix},\begin{pmatrix}
0&1\\
0&0
\end{pmatrix}\right]=\begin{pmatrix}
0&x^{p^i}-x\\
0&0
\end{pmatrix}\]
it suffices to show that there is an element $x\in F_{p^k}$ such that $x^{p^i}\neq x$. Since $k$ does not divide $i$ we can write $i=\alpha k+\beta$ with $0<\beta<k$. Pick $x\in F_{p^k}$ such that $x^{p^\beta}\neq x$. Such an $x$ exists since the lowest degree monic polynomial identity in one variable satisfied by $F_{p^k}$ is $X^{p^k}-X$. Now
\[x^{p^{\alpha k}}=\left(x^{p^k}\right)^{p^{(\alpha-1)k}}=x^{p^{(\alpha-1)k}}=\cdots=x^{p^k}=x\]
and therefore
\[x^{p^i}=x^{p^{\alpha k+\beta}}=x^{p^{\alpha k}p^\beta}=\left(x^{p^{\alpha k}}\right)^{p^\beta}=x^{p^\beta}\neq x\]
as required.

Now we turn to the opposite implication. That is we show that $R=B_{p,n,i}$ is a minimal noncommutative ring when $n=q^m$ is a prime power and $i$ is of the form $jq^{m-1}$ for $1\leq j\leq q-1$. That $R$ is not commutative follows as above from the fact that $X^{p^i}-X$ is not an identity for the field $F_{p^n}$. We start by showing that every proper homomorphic image $R/I$ is commutative. Since every commutator in $R$ is a strictly upper triangular matrix, it is enough to show that every matrix of the form $\begin{pmatrix}
0&z\\
0&0
\end{pmatrix}$ is contained in $I$. Pick a nonzero element $\begin{pmatrix}
x^{p^i}&y\\
0&x
\end{pmatrix}$ of $I$. If $x=0$ then $y\neq 0$ and $I$ contains
\[\begin{pmatrix}
0&y\\
0&0
\end{pmatrix}\begin{pmatrix}
\left(y^{-1}z\right)^{p^i}&0\\
0&y^{-1}z
\end{pmatrix}=\begin{pmatrix}
0&z\\
0&0
\end{pmatrix}\]
as required. If $x\neq 0$ then $I$ contains
\[\begin{pmatrix}
x^{p^i}&y\\
0&x
\end{pmatrix}\begin{pmatrix}
0&x^{-p^i}z\\
0&0
\end{pmatrix}=\begin{pmatrix}
0&z\\
0&0
\end{pmatrix}\]
as required. Thus $I$ contains $[R,R]$, i.e. $R/I$ is commutative. It remains to prove that every proper subring of $R$ is commutative. Let $S$ be a subring of $R$. Consider the homomorphism $\pi:S\to F_{p^n}$ mapping each matrix in $S$ to its bottom-right entry. The image of $\pi$ must be a subfield $F_{p^k}$ of $F_{p^n}$, while its kernel must be an $F_{p^k}$-subspace of $F_{p^n}$.

First we consider the case when $\pi(S)=F_{p^n}$. Then either $\ker(\pi)=0$ and $S$ is commutative, or $\ker(\pi)\cong F_{p^n}$ and $S=R$. So we can now assume that $\pi(S)=F_{p^k}$ is a proper subfield of $F_{p^n}$. Since $n=q^m$, this means $k=q^l$ with $0\leq l\leq m-1$. In this case, since $i=jq^{m-1}$ we have $i=kh$ with $h\geq 1$. Therefore for any $x\in F_{p^k}$ we have
\[x^{p^i}=x^{p^{kh}}=\left(x^{p^k}\right)^{p^{k(h-1)}}=x^{p^{k(h-1)}}=\cdots=x^{p^k}=x.\]
Thus every element of $S$ is of the form $\begin{pmatrix}
x&y\\
0&x
\end{pmatrix}$ and it follows that $S$ is commutative, completing the proof.
\end{proof}

Combined with the fact that the three classes are pairwise disjoint, this yields our refined classification.

\begin{theorem}\label{RefinedClassification}
A ring is finite minimal noncommutative if and only if it belongs to one of the following three pairwise disjoint classes.
\begin{itemize}[topsep=0pt]
\item[(1)] For a prime $p$, the ring $U_p$ of upper triangular $2\times 2$ matrices over $F_p$, i.e.
\[U_p=\left\lbrace\begin{pmatrix}
a&b\\
0&c
\end{pmatrix}
:a,b,c\in F_p\right\rbrace.\]
\item[(2)] For primes $p$ and $q$, and integers $m\geq 1$ and $1\leq j\leq q-1$, the ring
\[B_{p,q^m,jq^{m-1}}:=\left\lbrace\begin{pmatrix}
x^{p^{jq^{m-1}}}&y\\
0&x
\end{pmatrix}
:x,y\in F_{p^{q^m}}\right\rbrace.\]
\item[(3)] For a prime $p$ and an integer $n\geq3$, the minimal noncommutative homomorphic images of
\[A\coloneqq \mathbb{Z}\langle x,y\rangle/\left(I+(p,x,y)^n\right)\]
where $I=(p,x,y)[x,y]\mathbb{Z}\langle x,y\rangle+\mathbb{Z}\langle x,y\rangle[x,y](p,x,y)$.
\end{itemize}
\end{theorem}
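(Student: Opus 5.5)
The theorem follows by combining the coarse classification of Bell and Danchev \cite[Theorem~2.5]{BellandDanchev} with Proposition~\ref{FirstTwoClasses}, and then checking that the three classes are pairwise disjoint. For the forward direction, let $R$ be a finite minimal noncommutative ring. By \cite[Theorem~2.5]{BellandDanchev}, $R$ is isomorphic to $U_p$, to some $B_{p,n,i}$, or to a noncommutative homomorphic image of $A$ for some $p$ and $n\geq 3$. If $R\cong B_{p,n,i}$, then Proposition~\ref{FirstTwoClasses}(2) forces $n=q^m$ and $i=jq^{m-1}$. If $R$ is an image of $A$, it is by assumption minimal noncommutative, so it lies in class (3). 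For the converse, $U_p$ and the listed rings $B_{p,q^m,jq^{m-1}}$ are minimal noncommutative by Proposition~\ref{FirstTwoClasses}. They are finite, with $p^3$ and $p^{2q^m}$ elements respectively. The rings in class (3) are minimal noncommutative by definition, and they are finite because $A$ is finite: it is spanned over $\mathbb{Z}/p^{n}$ (indeed $p^n\in(p,x,y)^n$) by the finitely many words of length less than $n$.

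Disjointness is the only step with real content, and I would separate the classes by ring invariants. Write $J$ for the Jacobson radical. For $U_p$, $J$ consists of the strictly upper triangular matrices and $U_p/J\cong F_p\times F_p$ is not local. For $B=B_{p,q^m,jq^{m-1}}$, $J$ is the square-zero ideal of matrices with $x=0$ and $B/J\cong F_{p^{q^m}}$ is a field. Also $J\cong F_{p^{q^m}}$ with left and right actions twisted by different powers of Frobenius. Both of these rings have characteristic $p$.

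Now let $R$ be in class (3). In $A$, and hence in $R$, the ideal $\mathfrak m=(p,x,y)$ is nilpotent, since $\mathfrak m^n=0$. Moreover $A/\mathfrak m\cong F_p$, so $R$ is local with residue field $F_p$. This rules out $U_p$, whose quotient by $J$ is not a field. It also rules out $B_{p,q^m,jq^{m-1}}$, whose residue field $F_{p^{q^m}}$ has $p^{q^m}>p$ elements because $m\geq1$.

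Finally, $U_p$ and $B_{p,q^m,jq^{m-1}}$ are distinct from each other, again since one has local quotient by $J$ and the other does not. Within class (2), different parameters give non-isomorphic rings: $p$ is the characteristic, $q^m$ is recovered from $|B/J|=p^{q^m}$, and $j$ is recovered from the twisted bimodule structure of $J$. This last point is not needed for the disjointness claim itself. The main obstacle is simply verifying that every ring in class (3) is local with residue field $F_p$, which follows from the nilpotency of $(p,x,y)$ in $A$.
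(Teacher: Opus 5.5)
Your proof is correct. The reduction is the same as the paper's: Bell--Danchev plus Proposition~\ref{FirstTwoClasses}, leaving only disjointness. The difference is in how you separate the classes.

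\textbf{The paper's route.} It distinguishes classes (1) and (2) by cardinality, $p^3$ versus $p^{2q^m}$. It separates class (3) from the other two by showing that every commutator in a homomorphic image of $A$ is central, since $[A,A]=F_p[x,y]$ is annihilated on both sides by $(p,x,y)$. It then exhibits explicit noncentral commutators in $U_p$ and in $B_{p,q^m,jq^{m-1}}$.

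\textbf{Your route.} You use the single isomorphism invariant $R/J(R)$, which is $F_p\times F_p$, $F_{p^{q^m}}$ with $q^m\geq 2$, and a prime field, respectively. When the primes differ, the relevant point is that $|R/J(R)|$ is prime in class (3) but not in class (2). For class (3), this only requires that the image of $(p,x,y)$ is a nilpotent ideal with quotient $F_p$, which is nonzero because $R\neq 0$. So you avoid any commutator computation, and you also record that every ring in class (3) is local.

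\textbf{What each buys.} The paper's centrality argument is not wasted: it is exactly what the paper reuses immediately afterwards, in Lemma~\ref{CommutatorIdealofR} and the identities of Propositions~\ref{CommonIdentitiesMinNonco} and~\ref{CommonFamiliesIdentities}. On the other hand, you are more explicit than the paper about the converse direction, namely that $A$, and hence every ring in class (3), is finite. Your remark about recovering $j$ from the Frobenius twist of the bimodule $J$ is true but, as you say, not needed for disjointness.
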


\begin{proof}
By Bell and Danchev's classification, and Proposition~\ref{FirstTwoClasses}, we only need to show that the three classes are pairwise disjoint. It is clear for the first two classes, as a ring $U_p$ has order $p^3$ while a ring $B_{p,q^m,jq^{m-1}}$ has order $p^{2q^m}$.

Now, looking at the third class, consider the ring $A$. It is clear that the ring $A/([x,y])$ is commutative, and therefore the commutator ideal $[A,A]$ coincides with $([x,y])$. Since $[x,y]$ is annihilated by the maximal ideal $(p,x,y)$ both on the left and on the right, we have $[A,A]=([x,y])=F_p[x,y]$, which is central. If $R$ is a homomorphic image of $A$, then the ideal $[R,R]$ is the image of $[A,A]$, and hence $[R,R]$ must be central in $R$. Thus, in the third class, every commutator is a central element. However, in a ring $U_p$, the commutator
\[\left[\begin{pmatrix}
1&0\\
0&0
\end{pmatrix},\begin{pmatrix}
0&1\\
0&0
\end{pmatrix}\right]=\begin{pmatrix}
0&1\\
0&0
\end{pmatrix}\]
is not central. It follows that the first and third class are disjoint. Similarly, in $B_{p,q^m,jq^{m-1}}$, picking $x\in F_{p^{q^m}}$ such that $x\neq x^{p^{jq^{m-1}}}$ and any nonzero $y\in F_{p^{q^m}}$, one has
\[\left[\begin{pmatrix}
x^{p^{jq^{m-1}}}&0\\
0&x
\end{pmatrix},\begin{pmatrix}
0&y\\
0&0
\end{pmatrix}\right]=\begin{pmatrix}
0&\left(x^{p^{jq^{m-1}}}-x\right)y\\
0&0
\end{pmatrix}\]
which is not central. Therefore, the second and third classes are disjoint.
\end{proof}

We fix the notation of Theorem~\ref{RefinedClassification} for the rest of the paper. With the first two classes being settled, it remains to determine which homomorphic images of the ring $A$ are minimal noncommutative. Before turning to this problem, which turns out to be much more interesting than the first two classes, we use our refined classification to obtain some identities common to all three classes. We first collect a few observations about the third class.

\begin{lemma}\label{CentreofA}
The centre of the ring $A$ is $\Z(A)=\mathbb{Z}/p^n\mathbb{Z}+(p,x,y)^2$. 
\end{lemma}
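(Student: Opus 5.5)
Write $\mathfrak m=(p,x,y)$ for the maximal ideal of $A$; its image in $A$ is nilpotent, since $\mathfrak m^n=0$. I will show both inclusions. The first key observation is that $\mathfrak m\,[x,y]=[x,y]\,\mathfrak m=0$ in $A$, by the definition of $I$. Hence for $a,b\in\mathfrak m$ and any $c\in A$ we get $[ab,c]=a[b,c]+[a,c]b$. The commutator $[b,c]$ lies in $[A,A]=F_p[x,y]$ (as computed in the proof of Theorem~\ref{RefinedClassification}), so it is killed by $a\in\mathfrak m$ on the left. Likewise $[a,c]b=0$. So every element of $\mathfrak m^2$ is central. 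The integers $\mathbb Z/p^n\mathbb Z$ are trivially central; note that $p^n\in\mathfrak m^n$, so the image of $\mathbb Z$ in $A$ is a quotient of $\mathbb Z/p^n\mathbb Z$. This gives $\mathbb Z/p^n\mathbb Z+\mathfrak m^2\subset\Z(A)$.

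For the reverse inclusion, every element of $A$ can be written as $k+\alpha x+\beta y+u$ with $k\in\mathbb Z$, $\alpha,\beta\in\{0,\dots,p-1\}$ and $u\in\mathfrak m^2$. This holds because $A/\mathfrak m\cong F_p$ and $\mathfrak m/\mathfrak m^2$ is spanned over $F_p$ by the images of $p,x,y$; we absorb $p$ into the integer part. If such an element is central, then since $k+u$ is central, $z=\alpha x+\beta y$ is central. Now $[z,x]=-\beta[x,y]$ and $[z,y]=\alpha[x,y]$. So it suffices to know that $[x,y]\neq 0$ in $A$, and more precisely that $\lambda[x,y]=0$ with $\lambda\in\mathbb Z$ forces $p\mid\lambda$. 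Granting this, $\alpha\equiv\beta\equiv 0 \pmod p$, so the element lies in $\mathbb Z/p^n\mathbb Z+\mathfrak m^2$.

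The main obstacle is this nonvanishing: one must show $[x,y]\notin I+\mathfrak m^n$ in $\mathbb Z\langle x,y\rangle$. I would do this by exhibiting a noncommutative homomorphic image of $A$ in which $[x,y]$ maps to a nonzero element of order $p$. The cleanest choice uses $n\ge3$. Take the $F_p$-algebra $F_p\langle x,y\rangle/(\text{all monomials of degree }\ge3,\ x^2,\ y^2,\ yx)$. It has basis $1,x,y,xy$, and there $[x,y]=xy\neq0$ is annihilated by $x$ and $y$ on both sides. We must check that $I$ and $\mathfrak m^n$ map to zero. The elements $p$ and $p^n$ map to $0$, and $\mathfrak m^n\subset\mathfrak m^3$ maps to zero. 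For $I$, each generator $m[x,y]$ or $[x,y]m$ with $m\in\{p,x,y\}$ maps to zero, since $xy$ is killed by $p,x,y$. Thus $[x,y]$ has nonzero image of order $p$, and the proof is complete.
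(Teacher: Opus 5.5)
Your proof is correct and follows essentially the paper's route. Centrality of $(p,x,y)^2$ comes from $(p,x,y)$ annihilating $[x,y]$ on both sides; your identity $[ab,c]=a[b,c]+[a,c]b$ is a compact form of the paper's letter-by-letter commutation of monomials. The reverse inclusion comes from writing an element as $k+\alpha x+\beta y+u$ with $u\in(p,x,y)^2$. What you add is a proof, via the image $F_p\langle x,y\rangle/(x^2,y^2,yx,\text{monomials of degree}\ge 3)$, that $[x,y]$ is nonzero of additive order $p$ in $A$, which the paper's ``it is clear'' tacitly assumes.
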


\begin{proof}
Considering the quotient $A/(p,x,y)^2=\mathbb{Z}/p^2\mathbb{Z}\oplus x F_p \oplus y F_p$, it is clear that the centre cannot be strictly larger than what we claim. So we need only prove that the ideal $(p,x,y)^2$ is central in $A$. Take an arbitrary monomial in $(p,x,y)^2$, i.e. a product $a_1\cdots a_h$, with each $a_i\in\lbrace p,x,y\rbrace$ and $h\geq 2$. For each $1\leq j\leq h$, we have
\begin{align*}
a_1\cdots a_j x a_{j+1}\cdots a_h&=a_1\cdots [a_j, x]a_{j+1}\cdots a_h+a_1\cdots a_{j-1} x a_{j}\cdots a_h\\
&=a_1\cdots a_{j-1} x a_{j}\cdots a_h
\end{align*}
since $[a_j, x]$ is either zero or $[y,x]$, which is annihilated on both sides by any $a_i$. It follows that $a_1\cdots a_hx=xa_1\cdots a_h$ and similarly $a_1\cdots a_h$ commutes with $y$. Thus, every monomial and hence every element of $(p,x,y)^2$ is central in $A$.
\end{proof}

So the centre of $A$, and hence of every homomorphic image of $A$, is very large.

\begin{lemma}\label{A3GenOverCentre}
As a module over its centre $\Z(A)$, the ring $A$ is generated by $1,x$ and $y$. In particular, every element of $A$ can be written as $\alpha x+\beta y+z$
where $\alpha,\beta\in\lbrace 0,\ldots,p-1\rbrace$ and $z\in\Z(A)$.
\end{lemma}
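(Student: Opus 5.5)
The plan is to use Lemma~\ref{CentreofA}, which gives $\Z(A)=\mathbb{Z}/p^n\mathbb{Z}+(p,x,y)^2$. As an abelian group (indeed as a $\mathbb{Z}/p^n\mathbb{Z}$-module), $A$ is spanned by the images of the noncommutative monomials in $x$ and $y$ of all degrees, since $p$ is central in $\mathbb{Z}\langle x,y\rangle$. I would sort these monomials by degree and show that each one lies in the $\Z(A)$-span of $1,x,y$. The stronger second statement then follows by absorbing multiples of $p$ into the centre.

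Concretely, take an arbitrary element $a\in A$. Write it as $a=c+\gamma x+\delta y+w$, where $c\in\mathbb{Z}/p^n\mathbb{Z}$ comes from the empty monomial, $\gamma,\delta\in\mathbb{Z}$ collect the coefficients of the degree-one monomials, and $w$ is a $\mathbb{Z}$-combination of monomials of degree at least $2$. Every monomial of degree at least $2$ in $x,y$ lies in $(p,x,y)^2$, so $w\in(p,x,y)^2\subset\Z(A)$ by Lemma~\ref{CentreofA}. Also $c\in\Z(A)$.

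Next I reduce the coefficients $\gamma,\delta$ modulo $p$. Write $\gamma=\alpha+p\gamma'$ with $\alpha\in\lbrace 0,\ldots,p-1\rbrace$, and similarly $\delta=\beta+p\delta'$. Then $p\gamma' x=\gamma'(px)$ with $px\in(p,x,y)^2$, so this term is central by Lemma~\ref{CentreofA}; the same holds for $p\delta' y$. Collecting all central pieces into
\[z=c+w+p\gamma'x+p\delta'y\in\Z(A),\]
we obtain $a=\alpha x+\beta y+z$, which is the second claim. The first claim follows immediately, since $z=z\cdot 1$ and $\alpha,\beta$ are integers, hence central.

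I do not expect a genuine obstacle. The only point needing care is the identification of $(p,x,y)^2$: it must contain both $px$ and all words of length $\ge 2$, and these are exactly the products of at least two generators from $\lbrace p,x,y\rbrace$ used in the proof of Lemma~\ref{CentreofA}. We also use that the image of $\mathbb{Z}$ in $A$ is $\mathbb{Z}/p^n\mathbb{Z}$, which is central.
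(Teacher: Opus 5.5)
Your proof is correct and takes essentially the same route as the paper, which simply observes that $A=\mathbb{Z}/p^n\mathbb{Z}+(x,y)$ and invokes Lemma~\ref{CentreofA}. Your split into constant, degree-one and degree-$\geq 2$ parts, with $px,py\in(p,x,y)^2$ absorbed into the centre, is exactly the detail behind that one-line argument.
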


\begin{proof}
This follows immediately from $A=\mathbb{Z}/p^n\mathbb{Z}+(x,y)$ and Lemma~\ref{CentreofA}.
\end{proof}

\begin{lemma}\label{CommutatorIdealofR}
Let $R$ be a homomorphic image of $A$. Then $[R,R]=F_p[x,y]$. In particular, $[R,R]$ is central and $[R,R]^2=0$.
\end{lemma}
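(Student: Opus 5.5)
We argue by passing everything through the quotient map. Fix a surjective homomorphism $\varphi\colon A\to R$ and keep writing $x,y,p$ for their images in $R$. The commutator ideal $[R,R]$ is the two-sided ideal generated by all commutators $[a,b]$ with $a,b\in R$. Since $\varphi$ is surjective, every commutator in $R$ is the image of a commutator in $A$. Hence $[R,R]=\varphi([A,A])$, exactly as observed in the proof of Theorem~\ref{RefinedClassification}. The work therefore reduces to computing $[A,A]$ precisely and then pushing the result forward.

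First we determine $[A,A]$. By Lemma~\ref{A3GenOverCentre}, every element of $A$ has the form $\alpha x+\beta y+z$ with $z\in\Z(A)$ and $\alpha,\beta$ integers. For two such elements $a=\alpha x+\beta y+z$ and $b=\gamma x+\delta y+w$, bilinearity and centrality of $z,w$ give
\[[a,b]=(\alpha\delta-\beta\gamma)[x,y].\]
So every commutator is an integer multiple of $[x,y]$. Next we describe the ideal generated by $[x,y]$. By the definition of $I$, the element $[x,y]$ is annihilated on both sides by $p$, $x$ and $y$, hence by the maximal ideal $\mathfrak m=(p,x,y)$. Since $A=\mathbb{Z}/p^n\mathbb{Z}+\mathfrak m$, we get $A[x,y]A=\mathbb{Z}[x,y]$, and $p[x,y]=0$ reduces this to $F_p[x,y]$. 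Thus $[A,A]=F_p[x,y]$.

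Applying $\varphi$ gives $[R,R]=F_p[x,y]$ in $R$. This set is either $0$ or one-dimensional over $F_p$, depending on whether the image of $[x,y]$ vanishes.

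It remains to prove the two consequences. For centrality, $[x,y]$ lies in $\mathfrak m^2$, which is central by Lemma~\ref{CentreofA}. Images of central elements under a surjection are central, so $[R,R]$ is central in $R$. Alternatively, $[x,y]$ commutes with $x$ and $y$ because $x[x,y]=0=[x,y]x$, and likewise for $y$. For the square, $[x,y]^2=0$ because $[x,y]\in\mathfrak m$ and $\mathfrak m$ annihilates $[x,y]$. Hence $[R,R]^2=F_p[x,y]^2=0$.

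No step here is a real obstacle. The only care needed is to distinguish the ideal $[R,R]$ from the set of commutators, and to note that the $\mathbb{Z}/p^n\mathbb{Z}$ part of $A$ acts on $[x,y]$ only through $F_p$.
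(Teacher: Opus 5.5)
Your proof is correct and follows the paper's argument. You reduce to $R=A$ via the surjection, identify $[A,A]$ with $([x,y])=F_p[x,y]$ because $(p,x,y)$ annihilates $[x,y]$ on both sides, and read off centrality and $[x,y]^2=0$ from that same annihilation. The only cosmetic difference is that the paper gets $[A,A]=([x,y])$ from the commutativity of $A/([x,y])$, whereas you compute each commutator directly as $(\alpha\delta-\beta\gamma)[x,y]$ via Lemma~\ref{A3GenOverCentre}; both work.
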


\begin{proof}
Since a commutator in $R$ is the image of a commutator in $A$, it is enough to prove the statement for $R=A$. As we showed in the proof of Theorem~\ref{RefinedClassification}, $[A,A]=F_p[x,y]$ which is central. Moreover, since $(p,x,y)[x,y]=0$ in $A$, we obtain $[x,y]^2=0$, that is $[A,A]^2=0$.
\end{proof}

This means in particular that, despite not being rings of $2\times 2$ upper triangular matrices (unlike the first two classes), rings in the third class share polynomial identity features of such rings.

\begin{proposition}\label{CommonIdentitiesMinNonco}
Let $R$ be a finite minimal noncommutative ring. Then $R$ satisfies
\begin{itemize}[topsep=0pt]
\item[(1)] every alternating multilinear polynomial identity of degree at least 4;
\item[(2)] the identity $[X_1,X_2]X_3[X_4,X_5]=0$.
\end{itemize}
\end{proposition}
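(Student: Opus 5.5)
By Theorem~\ref{RefinedClassification}, it suffices to check (1) and (2) separately on each of the three classes. The plan is to use one structural feature for the first two classes and another for the third.

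For the first two classes, $R$ embeds in the ring $T$ of upper triangular $2\times 2$ matrices over a commutative ring, namely $F_{p^n}$ (with $U_p\subset T_2(F_p)$ and $B_{p,n,i}\subset T_2(F_{p^n})$). Write $J$ for the strictly upper triangular part of $T$, so $J^2=0$. Then:
\begin{itemize}[topsep=0pt]
\item[(a)] Every commutator in $T$ has equal (zero) diagonal entries, so it lies in $J$. Hence $[X_1,X_2]X_3[X_4,X_5]\in J\,T\,J=J^2=0$, which gives (2).
\item[(b)] For (1), let $f(X_1,\dots,X_d)$ be alternating and multilinear with $d\ge 4$. Since $f$ is multilinear, it suffices to evaluate it on a spanning set of $R$ over its prime field. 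Because $f$ is alternating, it vanishes whenever two arguments coincide or, more generally, whenever the arguments are linearly dependent over the prime subfield $F_p$. But $R$ may have dimension larger than $3$ over $F_p$; for instance $B_{p,n,i}$ has dimension $2n$. So I would instead argue over the field $F_{p^n}$: extend scalars to $T_2(F_{p^n})$, which has dimension $3$ over $F_{p^n}$. Any alternating multilinear polynomial is $F_{p^n}$-multilinear, since it has integer coefficients and $F_{p^n}$ is central in the scalar sense. So evaluating at $d\ge4$ elements of a $3$-dimensional $F_{p^n}$-space gives $0$. The identity therefore holds on $T_2(F_{p^n})\supset R$.
\end{itemize}
I expect this extension-of-scalars step to be the main point needing care, since $R$ itself is only an $F_p$-space and not an $F_{p^n}$-subspace. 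The argument avoids this by proving the identity on the larger ring $T_2(F_{p^n})$ and restricting to $R$.

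For the third class, I would use Lemmas~\ref{A3GenOverCentre} and~\ref{CommutatorIdealofR}; it suffices to work in $R$, a homomorphic image of $A$.
\begin{itemize}[topsep=0pt]
\item[(a)] By Lemma~\ref{CommutatorIdealofR}, $[R,R]$ is central and $[R,R]^2=0$. Hence $[X_1,X_2]X_3[X_4,X_5]=X_3[X_1,X_2][X_4,X_5]=0$, which gives (2).
\item[(b)] For (1), by multilinearity and Lemma~\ref{A3GenOverCentre}, each argument can be written as a $Z(R)$-combination of $1,x,y$. A central coefficient can be pulled out of every monomial regardless of position, so $f$ is $Z(R)$-multilinear. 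It therefore suffices to evaluate $f$ on arguments taken from $\{1,x,y\}$. Since $d\ge4>3$, some argument is repeated, and because $f$ is alternating, $f$ vanishes there. Hence the whole evaluation is $0$.
\end{itemize}
Here the subtle point is that an alternating multilinear polynomial with integer coefficients is automatically linear over central elements. This holds because every monomial of $f$ contains each variable exactly once, so a central factor can be moved to the front of each monomial.
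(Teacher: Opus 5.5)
Your proof is correct and follows the paper's argument. For (2) you use that commutators are strictly upper triangular in the first two classes, and that $[R,R]$ is a central square-zero ideal in the third; for (1) you use that $R$ is, or embeds in a ring that is, generated by three elements over a central subring. The only difference is that you prove inline the standard fact the paper cites from McConnell--Robson (expand over central coefficients, then use pigeonhole and the alternating property), whereas the paper simply quotes it.
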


\begin{proof}
(1) It is well-known that if a ring is generated by $m$ elements as a module over its centre, then it satisfies every alternating multilinear polynomial identity of degree at least $m+1$ (see e.g. \cite[Corollary~13.1.13(i)]{McConnell-Robson}). This is the case for rings $U_p$ in the first class, and for rings in the third class by Lemma~\ref{A3GenOverCentre}. For the second class, a ring $B_{p,q^m,jq^{m-1}}$ is a subring of the ring of $2\times 2$ upper triangular matrices over $F_{p^{q^m}}$, and it inherits its identities. This includes all alternating multilinear polynomial identities of degree at least $4$, since the overring is generated by $3$ elements over its centre.

(2) In the first two classes, this follows from the fact that a commutator is a strictly upper triangular matrix. In the third class, we have $[R,R]^2=0$ by Lemma~\ref{CommutatorIdealofR}.
\end{proof}

From the perspective of commutativity-forcing sets of polynomial identities, this implies in particular that the identities of Proposition~\ref{CommonIdentitiesMinNonco} are redundant.

\begin{corollary}
Let $\mathcal{S}$ be a set of polynomial identities and let $\mathcal{S}'$ be a set of alternating multilinear polynomial identities of degree at least 4. There is a noncommutative ring which satisfies all the identities of $\mathcal{S}$ if and only if there is a noncommutative ring which satisfies all the identities of $\mathcal{S}\cup\mathcal{S}'\cup\left\lbrace[X_1,X_2]X_3[X_4,X_5]\right\rbrace$.
\end{corollary}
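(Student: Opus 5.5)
The easy implication ($\Leftarrow$) is immediate: a ring satisfying $\mathcal{S}\cup\mathcal{S}'\cup\lbrace[X_1,X_2]X_3[X_4,X_5]\rbrace$ satisfies in particular every identity of $\mathcal{S}$.

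For ($\Rightarrow$), suppose $R$ is a noncommutative ring satisfying all identities of $\mathcal{S}$. I would first reduce to the finite case and then descend to a minimal noncommutative ring. By \cite[Theorem~2.4]{BellandDanchev}, there is a finite noncommutative ring $R_0$ satisfying all identities of $\mathcal{S}$. This is the instance of \cite[Theorem~2.2]{BellandDanchev} with $\mathcal{T}=\lbrace XY-YX\rbrace$, which is finite, so no finiteness assumption on $\mathcal{S}$ is needed.

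Next I would pass from $R_0$ to a finite minimal noncommutative ring. Consider the set of noncommutative rings obtained from $R_0$ by a finite chain of operations, each step taking a subring or a homomorphic image of the previous ring. This set is nonempty, since it contains $R_0$, and every member is finite. Choose a member $M$ of minimal cardinality. If $M$ had a noncommutative proper subring, or a noncommutative proper homomorphic image, that ring would be strictly smaller than $M$ and still in the set, which is a contradiction. Here a proper homomorphic image means a quotient by a nonzero ideal; it is strictly smaller because $M$ is finite. Hence $M$ is minimal noncommutative and finite. Moreover, polynomial identities pass to subrings and homomorphic images, as noted in the introduction, so $M$ still satisfies every identity of $\mathcal{S}$.

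Finally, Proposition~\ref{CommonIdentitiesMinNonco} applies to $M$. It shows that $M$ satisfies every alternating multilinear identity of degree at least $4$, so in particular all of $\mathcal{S}'$, and also the identity $[X_1,X_2]X_3[X_4,X_5]=0$. Therefore $M$ is a noncommutative ring satisfying all identities of $\mathcal{S}\cup\mathcal{S}'\cup\lbrace[X_1,X_2]X_3[X_4,X_5]\rbrace$.

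The only step needing care is the descent to a minimal noncommutative ring. The point is that we iterate taking subrings and images; a single step does not suffice, since a minimal subring might still have noncommutative images. Finiteness makes the minimal-cardinality argument work. The remaining subtlety is that $\mathcal{S}$ may be infinite, but only the finiteness of $\mathcal{T}$ is required in \cite[Theorem~2.2]{BellandDanchev}.
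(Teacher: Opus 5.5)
Your proof is correct and follows the same route as the paper, which gives no separate proof and instead relies on its earlier remark that only finite minimal noncommutative rings need to be checked: use Bell--Danchev to get a finite noncommutative ring satisfying $\mathcal{S}$, pass to a finite minimal noncommutative ring, then apply Proposition~\ref{CommonIdentitiesMinNonco}. Your minimal-cardinality descent is a sound way of making that middle step explicit.
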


In addition to these identities common to all three classes, we have a family of identities for which each class satisfies a slightly different variant.

\begin{proposition}\label{CommonFamiliesIdentities}
(1) A ring $U_p$ in the first class satisfies the identities
\begin{align*}
\left(X_1^p-X_1\right)X_2[X_3,X_4]&=0\tag{1.a}\\ [X_1,X_2]X_3\left(X_4^p-X_4\right)&=0\tag{1.b}\\
\left(X_1^p-X_1\right)X_2\left(X_3^p-X_3\right)&=0\tag{1.c}\\
\intertext{(2) A ring $B_{p,q^m,jq^{m-1}}$ in the second class satisfies the identities}
\left(X_1^{p^{q^{m}}}-X_1\right)X_2[X_3,X_4]&=0\tag{2.a}\\ [X_1,X_2]X_3\left(X_4^{p^{q^{m}}}-X_4\right)&=0\tag{2.b}\\
\left(X_1^{p^{q^{m}}}-X_1\right)X_2\left(X_3^{p^{q^{m}}}-X_3\right)&=0\tag{2.c}\\
\intertext{(3) A ring $R$ in the third class for prime $p$ and integer $n\geq 3$ satisfies the identities}
\left(X_1^i-X_1\right)X_2[X_3,X_4]&=0\tag{3.a}\\ [X_1,X_2]X_3\left(X_4^i-X_4\right)&=0\tag{3.b}\\
\left(X_1^i-X_1\right)X_2\left(X_3^i-X_3\right)X_4\cdots X_{2n-2}\left(X_{2n-1}^i-X_{2n-1}\right)&=0\tag{3.c}\\
[[X_1,X_2],X_3]&=0\tag{3.d}
\intertext{ where $i=p^{n-1}(p-1)+1$.}
\end{align*}
\end{proposition}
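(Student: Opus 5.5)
We prove the three parts separately, each time using the matrix or ideal structure to show that the relevant factors fall into ideals whose products vanish.

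\emph{The first two classes.} Write $R$ for $U_p$ or $B_{p,q^m,jq^{m-1}}$ and set $N$ for the ideal of strictly upper triangular matrices in $R$. Let $\pi_1,\pi_2$ be the maps sending a matrix to its top-left and bottom-right diagonal entries. Both are ring homomorphisms into $F_p$, respectively into $F_{p^{q^m}}$. Put $e=p$ in case (1) and $e=p^{q^m}$ in case (2). Every element $a$ of these fields satisfies $a^e=a$. Since $\pi_1,\pi_2$ are homomorphisms, $\pi_k(X^e-X)=0$ for $k=1,2$, so $X^e-X\in N$ for every $X\in R$. Every commutator $[X_3,X_4]$ also lies in $N$, because the diagonal part of $R$ is commutative. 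Finally $NRN=0$: for $n,n'\in N$ and any $r$, the product $nrn'$ is strictly upper triangular times upper triangular times strictly upper triangular, which is $0$ for $2\times 2$ matrices. All of (1.a)--(2.c) are of the form $n\,X\,n'$ with $n,n'\in N$, so they hold.

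\emph{The third class.} Let $\mathfrak m=(p,x,y)$ in $R$, a homomorphic image of $A$. We have $\mathfrak m^n=0$ and $R/\mathfrak m=F_p$, so $R$ is local.

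For any $X\in R$ we show $X^i-X\in\mathfrak m^{?}$ with enough depth, in fact in $\mathfrak m^{n-1}$ or better. If $X\in\mathfrak m$, then $X^i\in\mathfrak m^i=0$ since $i\ge n$, so $X^i-X=-X\in\mathfrak m$. If $X\notin\mathfrak m$, then $X$ is a unit in the finite local ring $R$. The unit group has order $|R^\times|=(p-1)|1+\mathfrak m|$, and $|1+\mathfrak m|$ is a power of $p$. A sharper fact is that $1+\mathfrak m$ has exponent dividing $p^{n-1}$, because $(1+u)^{p^k}\in 1+\mathfrak m^{k+1}$ by the binomial theorem together with $p\in\mathfrak m$. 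This needs care, since $u$ and $p$ commute but elements of $\mathfrak m$ need not commute among themselves. Here we only raise a single element to powers, so the expansion is commutative. Thus $X^{p^{n-1}(p-1)}=1$, giving $X^i=X$.

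So in all cases $X^i-X\in\mathfrak m$. By Lemma~\ref{CommutatorIdealofR}, $[R,R]=F_p[x,y]$ is annihilated on both sides by $\mathfrak m$, since $\mathfrak m[x,y]=[x,y]\mathfrak m=0$ in $A$. Writing $X_2=\lambda+m$ with $\lambda\in\mathbb Z$ and $m\in\mathfrak m$, we get
\[(X_1^i-X_1)X_2[X_3,X_4]=(X_1^i-X_1)\lambda[X_3,X_4]+(X_1^i-X_1)m[X_3,X_4]=0,\]
because $X_1^i-X_1\in\mathfrak m$ kills the commutator and $m$ kills it too. This gives (3.a), and (3.b) is symmetric.

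For (3.c), the expression contains $n$ factors $X_k^i-X_k\in\mathfrak m$, interleaved with arbitrary elements. Since $\mathfrak m$ is a two-sided ideal, the product lies in $\mathfrak m^n=0$.

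Identity (3.d) follows directly from the centrality of commutators in Lemma~\ref{CommutatorIdealofR}.

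\emph{Main obstacle.} The main obstacle is the exponent claim for $1+\mathfrak m$. We plan to show by induction that $(1+u)^{p^k}-1\in\mathfrak m^{k+1}$ for $u\in\mathfrak m$. For the induction step, write $(1+w)^p=1+pw+\binom p2w^2+\dots+w^p$ with $w\in\mathfrak m^{k+1}$ commuting with itself. The terms $pw$ and $\binom{p}{j}w^j$ lie in $\mathfrak m^{k+2}$ because $p\in\mathfrak m$, and $w^p$ lies in $\mathfrak m^{p(k+1)}\subset\mathfrak m^{k+2}$.
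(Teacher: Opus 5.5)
Your proof is correct. For the first two classes it is the paper's argument: $X^e-X$ and every commutator are strictly upper triangular, and $NRN=0$.

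For the third class the overall structure is also the paper's. Each factor $X^i-X$ lies in $\mathfrak m=(p,x,y)$. This ideal kills $[R,R]=F_p[x,y]$ on both sides and satisfies $\mathfrak m^n=0$, while (3.d) is Lemma~\ref{CommutatorIdealofR}.

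The genuine difference is how you show $X^i-X\in\mathfrak m$.
\begin{itemize}
\item The paper writes $a=\alpha+w$ with $\alpha\in\lbrace 0,\ldots,p-1\rbrace$ and $w\in\mathfrak m$. It applies Euler's theorem in $\mathbb{Z}/p^n\mathbb{Z}$ (the characteristic of $A$) to get $\alpha^i=\alpha$, then expands $(\alpha+w)^i$ binomially using that $\alpha$ is central. No case split and no knowledge of the unit group is needed.
\item You split into $X\in\mathfrak m$ and $X$ a unit, and prove that $R^\times$ has exponent dividing $p^{n-1}(p-1)$. This uses $X^{p-1}\in 1+\mathfrak m$, which comes from $R^\times/(1+\mathfrak m)\cong F_p^\times$ and which you leave implicit. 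It also uses your induction $(1+u)^{p^k}\in 1+\mathfrak m^{k+1}$, which is correct.
\end{itemize}

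Your route yields a sharper fact: $X^i=X$ for units and $X^i=0$ for non-units, which explains the particular exponent $i$. It is, however, more than the identities need. Since $R/\mathfrak m\cong F_p$ and $i\equiv 1\pmod{p-1}$, Fermat's little theorem in the residue field gives $X^i-X\in\mathfrak m$ at once. The same remark applies to the paper's use of Euler modulo $p^n$. So your ``main obstacle'' can be bypassed entirely.

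One small slip: your aside that $X^i-X$ lies ``in $\mathfrak m^{n-1}$ or better'' is false. For $X=x$ you get $X^i-X=-x$, which is not central, whereas $\mathfrak m^2$ is central by Lemma~\ref{CentreofA}. This is harmless, since you only ever use membership in $\mathfrak m$.
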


\begin{proof}
The identities (1.a), (1.b) and (1.c) hold in $U_p$ because every commutator is a strictly upper triangular matrix, and so is $x^p-x$ for every matrix $x\in U_p$.

Similarly, the identities (2.a), (2.b) and (2.c) hold in $B_{p,q^m,jq^{m-1}}$ because every commutator is a strictly upper triangular matrix, and so is $x^{p^{q^m}}-x$ for every matrix $x\in B_{p,q^m,jq^{m-1}}$.

For (3), Lemma~\ref{CommutatorIdealofR} implies that (3.d) holds. It suffices to show that the ring $A$ satisfies the identities (3.a), (3.b) and (3.c). Every element $a$ in $A$ can be written as $a=\alpha + w$ where $0\leq \alpha\leq p-1$ and $w\in (p,x,y)$. Now, Euler's phi function yields $1=\alpha^{\phi(p^n)}=\alpha^{p^{n-1}(p-1)}=\alpha^{i-1}$ modulo $p^n$. Since the characteristic of $A$ is $p^n$, we have $\alpha^i=\alpha$ in $A$. Thus,
\[a^i=(\alpha+w)^i=\alpha^i+w'=\alpha+w'\]
for some $w'$ in $(p,x,y)$. It follows that $a^i-a=w'-w$ belongs to $(p,x,y)$. In particular, $(a^i-a)[A,A]$ is contained in $(p,x,y)[A,A]$. By Lemma~\ref{CommutatorIdealofR} this is equal to $(p,x,y)[x,y]F_p$, which is zero by definition of $A$. This proves (3.a), and similarly $[A,A](a^i-a)=0$, i.e. (3.b) also holds. Moreover, since we have shown that $a^i-a$ belongs to the ideal $(p,x,y)$, it follows that any evaluation of the polynomial
\[\left(X_1^i-X_1\right)X_2\left(X_3^i-X_3\right)X_4\cdots X_{2n-2}\left(X_{2n-1}^i-X_{2n-1}\right)\]
in $A^{2n-1}$ will yield an element of $(p,x,y)^n=0$.
\end{proof}

The identities of Propositions~\ref{CommonIdentitiesMinNonco}~and~\ref{CommonFamiliesIdentities} are the general forms of those that appear in \cite[Proposition~5.2]{BellandDanchev}, where they were shown to generate all the redundant identities in the special case of \textit{homogeneous multilinear} polynomials forcing commutativity. Propositions~\ref{CommonIdentitiesMinNonco}~and~\ref{CommonFamiliesIdentities} then constitute a partial generalisation to the general case.

Though the first two classes of Theorem~\ref{RefinedClassification} are easy to understand, the third one requires more study, as most homomorphic images of the ring $A$ are not minimal noncommutative. The definition of minimal noncommutative rings requires commutativity for both subrings and homomorphic images. We will see that in the third class the condition on subrings is always satisfied.

\begin{lemma}\label{ATwistedPolynomial}
Every element of $A$ can be uniquely written as
\[\alpha yx +\sum_{k=0}^{n-1}\sum_{0\leq s+t\leq k}\alpha_{s,t}^{(k)}p^sx^ty^{k-s-t}\]
where $\alpha,\alpha_{s,t}^{(k)}\in\lbrace 0,\ldots,p-1\rbrace$.
\end{lemma}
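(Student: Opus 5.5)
We prove existence and uniqueness separately. Uniqueness will follow from a counting argument, by comparing the number of such expressions with $|A|$ computed independently via a concrete model of $A$.

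\textbf{Existence.} The ideal $(p,x,y)^k$ is spanned additively by monomials $a_1\cdots a_h$ with $h\geq k$ and $a_i\in\{p,x,y\}$. Since $p$ is central, every such monomial equals $p^s w$, where $w$ is a word in $x,y$. By Lemma~\ref{CentreofA}, every monomial of degree $\geq 2$ is central, and commutators $[a_j,a_{j+1}]$ vanish inside products of length $\geq 3$, because $(p,x,y)[x,y]=0=[x,y](p,x,y)$. So any monomial of total degree $\geq 3$ can be reordered freely to $p^sx^ty^{k-s-t}$. In degree $2$ the only non-normal word is $yx$, and we keep it as a basis element. (We may equally use $yx=xy-[x,y]$, but we treat $yx$ as the extra symbol.) We then run a filtration argument downward in $k$. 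Each element of $A$ is an integer combination of the normal monomials of degree $<n$, together with $yx$, since degree $\geq n$ vanishes. Integer coefficients are reduced to $\{0,\ldots,p-1\}$ by pushing multiples of $p$ into the next degree: $p\cdot p^sx^ty^{k-s-t}=p^{s+1}x^ty^{k-s-t}$, and $p\cdot yx=p\,xy$ since $p[y,x]=0$. Induct on $k$ from $0$ up to $n-1$. At degree $n$ everything is zero, so the process terminates.

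\textbf{Uniqueness.} The number of expressions is $p^{1+N}$, where $N=\sum_{k=0}^{n-1}\binom{k+2}{2}$ counts the triples $(s,t,k)$. Surjectivity is already established, so it suffices to show $|A|\geq p^{1+N}$. For this we build an explicit ring $B$ of that order generated by two elements satisfying the defining relations. Let $C=\mathbb{Z}[x,y]/(p,x,y)^n$, the commutative truncated polynomial ring. Its elements are uniquely $\sum \alpha^{(k)}_{s,t}p^sx^ty^{k-s-t}$, giving $|C|=p^N$; this follows from the standard fact that $\mathbb{Z}[x,y]/(p,x,y)^n$ has associated graded $F_p[\bar p,x,y]$ truncated in degree $<n$. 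We then form $B=C\oplus F_p c$ with $c$ central, $c\cdot(p,x,y)C=0$ and $c^2=0$. The multiplication is twisted by a cocycle: on the lifts of $x,y$ we define $y\ast x=yx$ and $x\ast y=xy+c$, so that $[x,y]=c$. Concretely, $B$ can be realised as a quotient of $\mathbb{Z}\langle x,y\rangle$ in which we check $|B|=p^{N+1}$ directly. Alternatively, we can realise $B$ as a ring of $3\times3$-type matrices over $C$: use the Heisenberg-type construction with $\phi(x)=$ the operator acting on $C\oplus C$ whose off-diagonal entry carries the bilinear form $(u,v)\mapsto$ (the coefficient of $x$ in $u$)$\cdot$(the coefficient of $y$ in $v$) mod $p$. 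Associativity then reduces to the bilinearity of this form, and the form vanishes as soon as either argument lies in $(p,x,y)^2$ or involves a factor of $p$. $B$ satisfies $I+(p,x,y)^n=0$, so it is a quotient of $A$, giving $|A|\geq p^{N+1}$.

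\textbf{Main obstacle.} The main obstacle is this lower bound, i.e.\ constructing $B$ rigorously. The key steps are verifying associativity of the twisted product and checking that $c\neq 0$ is independent of $C$. We will define the product on $C\oplus F_p$ by
\[(u,\lambda)(v,\mu)=\bigl(uv,\ \bar u\,\bar v\,\lambda'\bigr),\]
where the second component is $\lambda\bar v_0+\bar u_0\mu+\omega(u,v)$. Here $\bar u_0$ denotes $u$ mod $(p,x,y)$, and $\omega(u,v)$ equals (the $x$-coefficient of $u$)$\cdot$(the $y$-coefficient of $v$) mod $p$. This $\omega$ is a $2$-cocycle because it is bilinear and factors through $(p,x,y)/(p,x,y)^2$, which makes the associativity check routine.
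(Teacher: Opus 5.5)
Your proof is correct. Its existence half uses the paper's mechanism: the paper takes a word $x^uy^vxh$ with $u$ maximal and uses centrality of monomials of degree $\ge 2$ (Lemma~\ref{CentreofA}) to move an $x$ further left. You instead swap adjacent letters, noting that in a word of length $\ge 3$ the correction term has $[x,y]$ next to another letter and so vanishes. You also make explicit the carrying of multiples of $p$ into the next degree, which the paper leaves tacit.

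The genuine difference is uniqueness. The paper's proof reduces everything to spanning ("it suffices to show\ldots") and never establishes linear independence. Your count $|A|\ge p^{N+1}$ does prove it, via the model $B=C\oplus F_p$ with product $(u,\lambda)(v,\mu)=(uv,\ \lambda\epsilon(v)+\epsilon(u)\mu+\xi(u)\eta(v))$, where $\epsilon(u)=u \bmod (p,x,y)$. Together with spanning, it shows $A\cong B$ as rings, so it upgrades the additive isomorphism of Lemma~\ref{AModuleIsomorphism} to a ring isomorphism. The paper's route is shorter; yours proves the full statement.

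When writing it up, you should tighten a few points.
\begin{itemize}
\item Keep only this explicit construction. Realising $B$ as a quotient of $\mathbb{Z}\langle x,y\rangle$ and ``checking its order directly'' is circular, since that quotient is $A$. The Heisenberg-matrix description is never actually specified.
\item Give the real reason for associativity. The maps $\xi(u)=\frac{\partial u}{\partial x}(0,0)$ and $\eta(u)=\frac{\partial u}{\partial y}(0,0)$ mod $p$ are well defined on $C$. They satisfy the Leibniz rule $\xi(uv)=\xi(u)\epsilon(v)+\epsilon(u)\xi(v)$, and likewise for $\eta$. It is this rule, not bilinearity alone, that makes both bracketings of a triple product agree.
\item Record that $X=(x,0)$ and $Y=(y,0)$ generate $B$, because $[X,Y]=(0,1)$.
\item Record that $(0,1)$ is annihilated on both sides by $\mathfrak m\oplus F_p$, where $\mathfrak m=(p,x,y)C$.
\item Record that any product of three elements of $\mathfrak m\oplus F_p$ has zero second coordinate. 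Since the first coordinates lie in $(p,x,y)^nC=0$ and $n\ge 3$, the image of $(p,x,y)^n$ in $B$ is $0$.
\end{itemize}
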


\begin{proof}
It suffices to show that, apart from $yx$, every monomial in $A$ in the variables $x$ and $y$ is of the form $x^ay^b$. If not, take a counterexample monomial $g$, i.e. a monomial $g$ in the ideal $(x,y)^3$ of $A$, such that every way to write $g$ in $A$ contains $yx$. Write
\[g=x^uy^vxh\]
where $u\geq 0$ is taken as large as possible, $v\geq 1$ and $h$ is a monomial. Since $g$ has degree at least 3, then either $u+v\geq 2$ or $h$ has degree at least one. In the first case, $x^uy^v$ is central by Lemma~\ref{CentreofA} and hence $g=x^uy^vxh=xx^uy^vh=x^{u+1}y^vh$. Since every expression equal to $g$ contains $yx$, the monomial $h$ must contain an $x$. But then $g=x^{u+1}y^vh$ contradicts our choice of $u$. In the second case where $h$ has degree at least one then $xh$ is central and $g=x^uy^vxh=x^uxhy^v=x^{u+1}hy^v$ which again contradicts our choice of $u$. It follows that such a $g$ does not exist, as required.
\end{proof}

\begin{proposition}\label{SubringsInA_pCommutative}
Let $R$ be a homomorphic image of $A$ and $T$ be a proper subring of $R$. Then $T$ is commutative.
\end{proposition}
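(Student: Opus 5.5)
The idea is to show that a noncommutative subring $T$ of $R$ must already contain the images of $x$ and $y$ up to central corrections, and that those corrections generate nothing new, forcing $T=R$. We argue by contraposition: assume $T\subset R$ is a noncommutative subring and prove $T=R$.

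Let $\bar x,\bar y$ be the images of $x,y$ in $R$, and $Z=\Z(R)$. By Lemma~\ref{A3GenOverCentre}, every element of $R$ is $\alpha\bar x+\beta\bar y+z$ with $\alpha,\beta\in\{0,\dots,p-1\}$ and $z$ in the image of $\Z(A)$, which is central in $R$. For two such elements, bilinearity of the commutator and the fact that $[\bar x,\bar x]=0$ give
\[[\alpha\bar x+\beta\bar y+z,\ \alpha'\bar x+\beta'\bar y+z']=(\alpha\beta'-\beta\alpha')[\bar x,\bar y].\]
Since $T$ is noncommutative, $[\bar x,\bar y]\neq0$, and $T$ contains two elements $a=\alpha\bar x+\beta\bar y+z$ and $b=\alpha'\bar x+\beta'\bar y+z'$ with $\det\begin{pmatrix}\alpha&\beta\\ \alpha'&\beta'\end{pmatrix}\not\equiv0 \pmod p$. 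Taking suitable $\mathbb{Z}$-linear combinations of $a$ and $b$ (invert the matrix mod $p$), $T$ contains elements $u=\bar x+c$ and $v=\bar y+d$, where $c,d$ lie in the image of $\Z(A)=\mathbb{Z}/p^n\mathbb{Z}+(p,x,y)^2$. Subtracting integers, we may assume $c,d\in\mathfrak m^2$, where $\mathfrak m$ is the image of the maximal ideal $(p,x,y)$. (The multiples of $p$ coming from inverting mod $p$ go into $c,d$ as well: they lie in $p\,\mathfrak m\subset\mathfrak m^2$, or are integers.)

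Next we show that the subring $S\subset T$ generated by $u$ and $v$ is all of $R$. Since $R$ is generated as a ring by $\bar x,\bar y$, it suffices to show $\bar x,\bar y\in S$. The map $\mathbb{Z}\langle X,Y\rangle\to R$ with $X\mapsto u$, $Y\mapsto v$ has image $S$. Note that $\mathfrak m^n=0$ and that $\mathfrak m$ is generated by $p,u,v$ modulo $\mathfrak m^2$. The key step is to show by downward induction that $\mathfrak m^k\subset S$ for all $k\geq 1$: $\mathfrak m^k$ is spanned by products of $k$ factors from $\{p,\bar x,\bar y\}$. Replace each $\bar x$ by $u-c$ and each $\bar y$ by $v-d$ and expand. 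The product of $p$'s, $u$'s and $v$'s lies in $S$, and every other term contains some $c$ or $d$ and hence lies in $\mathfrak m^{k+1}$, which is in $S$ by induction. The base case is $\mathfrak m^n=0$. With $k=2$ we get $c,d\in\mathfrak m^2\subset S$, hence $\bar x=u-c\in S$ and $\bar y=v-d\in S$. So $S=R$, and therefore $T=R$, contradicting properness.

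The main obstacle is the bookkeeping in the first step: making sure the corrections $c,d$ genuinely lie in $\mathfrak m^2$ rather than merely in the centre, so that the filtration argument by powers of $\mathfrak m$ applies. This follows from the explicit description of $\Z(A)$ in Lemma~\ref{CentreofA}, with integer parts removed since $1\in T$, and from the fact that $p\,\mathfrak m\subset\mathfrak m^2$.
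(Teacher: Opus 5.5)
Your proof is correct and follows the same two steps as the paper. First, invert the coefficient matrix mod $p$ to get $\bar x+c,\ \bar y+d\in T$ with $c,d\in\mathfrak m^2$; second, descend the $\mathfrak m$-adic filtration, which the paper phrases as a contradiction with the largest $k$ such that $(p,x,y)^k\not\subset S$. The one difference is a small improvement: by expanding arbitrary words in $p,\bar x,\bar y$ instead of normal-form monomials $p^ix^jy^{k-i-j}$, you avoid Lemma~\ref{ATwistedPolynomial} and the separate $yx$ case. Note only that $\mathfrak m^k$ is spanned by words of length exactly $k$ merely modulo $\mathfrak m^{k+1}$, which your induction hypothesis already covers.
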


\begin{proof}
The preimage of $T$ under the natural projection of $A$ onto $R$ is a proper subring of $A$, so it is enough to show the statement for $R=A$. Let $S$ be a noncommutative subring of $A$. We will show that $S=A$. Take two noncommuting elements $a_1,a_2$ in $S$. As in Lemma~\ref{A3GenOverCentre}, write
\[a_1=\alpha_1 x +\beta_1 y+z_1\]
where $\alpha_1,\beta_1\in\lbrace0,\ldots,p-1\rbrace$ and $z_1\in\Z(A)$. By Lemma~\ref{CentreofA}, $\Z(A)=\mathbb{Z}/p^n\mathbb{Z}+(p,x,y)^2$ and since $S$ contains $\mathbb{Z}/p^n\mathbb{Z}$, we can assume without loss of generality that $z_1\in(p,x,y)^2$. Similarly, write
\[a_2=\alpha_2x+\beta_2y+z_2\]
with $\alpha_2,\beta_2\in\lbrace0,\ldots p-1\rbrace$ and $z_2\in (p,x,y)^2$. Since
\[0\neq [a_1,a_2]=(\alpha_1\beta_2-\alpha_2\beta_1)[x,y]\]
we can further assume without loss of generality that $\alpha_1\neq 0$ and $\beta_2\neq 0$.

The natural numbers $\alpha_1$ and $p^n$ being coprime, there are integers $u$ and $v$ such that $u\alpha_1=1+vp^n$. Then $S$ contains
\[ua_1=u\alpha_1 x+u\beta_1y+uz_1=(1+vp^n)x+u\beta_1y+uz_1=x+u\beta_1y+uz_1\]
i.e. we can take $\alpha_1=1$. Similarly, without loss of generality we can take $\beta_2=1$. Then $S$ contains
\[a_1-\beta_1a_2=x+\beta_1y+z_1-\beta_1(\alpha_2x+y+z_2)=(1-\beta_1\alpha_2)x+(z_1-\beta_1z_2).\]
Since $0\neq [a_1,a_2]=(1-\alpha_2\beta_1)[x,y]$ and $p[x,y]=0$, we know that $p$ does not divide $(1-\beta_1\alpha_2)$. So these two numbers are coprime and there exists some integer $w$ such that $w(1-\beta_1\alpha_2)=1 \mod p^n$. Then $S$ contains
\[w(a_1-\beta_1a_2)=w(1-\beta_1\alpha_2)x+w(z_1-\beta_1z_2)=x+w(z_1-\beta_1z_2).\]
In particular, $S$ contains an element of the form $x+s_1$ where $s_1\in (p,x,y)^2$. Similarly, looking at $a_2-\alpha_2a_1$ yields an element $y+s_2$ in $S$ with $s_2\in(p,x,y)^2$.

Now, we assume that $S$ is a proper subring of $A$ and seek a contradiction. Since $A$ is generated as a ring by $x$ and $y$, the ideal $(p,x,y)$ cannot be contained in $S$. On the other hand, $(p,x,y)^n=0$ is in $S$ so we can pick $k\geq 1$ such that $(p,x,y)^k\not\subset S$ but $(p,x,y)^{k+1}\subset S$.

Pick a monomial $r$ in the variables $p,x$ and $y$ in $(p,x,y)^k\setminus S$. By Lemma~\ref{ATwistedPolynomial}, $r$ is of the form $p^ix^jy^{k-i-j}$ unless $r=yx$. We first deal with the latter case. If $r=yx$ then $k=2$ which means $(p,x,y)^3\subset S$. Recall that $S$ contains $x+s_1$ and $y+s_2$ with $s_1,s_2\in(p,x,y)^2$. Then $S$ contains
\[(y+s_2)(x+s_1)=yx+(ys_1+s_2x+s_2s_1).\]
The second summand belongs to $(p,x,y)^3$ and hence to $S$. Therefore $r=yx$ belongs to $S$, contradicting our choice of $r$. Thus only the case where $r=p^ix^jy^{k-i-j}$ remains. Now $S$ contains
\[p^i(x+s_1)^j(y+s_2)^{k-i-j}=p^ix^jy^{k-i-j}+q=r+q\]
where $q$ is contained in $(p,x,y)^{k+1}$, using again that $s_1,s_2\in(p,x,y)^2$. Our choice of $k$ then implies that $q\in S$. Therefore $r$ belongs to $S$ which is absurd. This is the desired contradiction and it shows that $S=A$.
\end{proof}

This has the following immediate consequences.

\begin{corollary}\label{OnlyHomoImages}
Let $R$ be a noncommutative homomorphic image of $A$. The following are equivalent:
\begin{itemize}[topsep=0pt]
\item[(i)] $R$ is minimal noncommutative;
\item[(ii)] every proper homomorphic image of $R$ is commutative;
\item[(iii)] every nonzero two-sided ideal of $R$ contains $[x,y]$.
\end{itemize}
\end{corollary}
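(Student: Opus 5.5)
The plan is to prove (i)$\Rightarrow$(ii)$\Rightarrow$(iii)$\Rightarrow$(i).

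\emph{(i)$\Rightarrow$(ii).} This is immediate from the definition of a minimal noncommutative ring.

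\emph{(ii)$\Rightarrow$(i).} $R$ is noncommutative by hypothesis. Proposition~\ref{SubringsInA_pCommutative} shows that every proper subring of a homomorphic image of $A$ is commutative. So (ii) supplies exactly the missing half of the definition.

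\emph{(ii)$\Leftrightarrow$(iii).} This uses Lemma~\ref{CommutatorIdealofR}: the commutator ideal is $[R,R]=F_p[x,y]$, the additive group generated by the image of $[x,y]$. It is the smallest ideal with commutative quotient.
\begin{itemize}[topsep=0pt]
\item For a nonzero ideal $J$ of $R$, the quotient $R/J$ is proper, hence commutative by (ii). Therefore $[R,R]\subset J$, and in particular $[x,y]\in J$.
\item Conversely, assume (iii) and take any proper homomorphic image $R/J$ with $J\neq 0$. Then $[x,y]\in J$. Since $J$ is an additive subgroup, it contains $F_p[x,y]=[R,R]$. Hence $R/J$ is commutative, which is (ii).
\end{itemize}

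The only point needing care is the identification $[R,R]=F_p[x,y]$ in the second step. Without it, an ideal containing the single element $[x,y]$ might not obviously contain every commutator. Lemma~\ref{CommutatorIdealofR} settles this, so no step is a genuine obstacle.
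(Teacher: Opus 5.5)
Your proof is correct and follows the paper's own argument: (i)$\Rightarrow$(ii) by definition, (ii)$\Leftrightarrow$(iii) via Lemma~\ref{CommutatorIdealofR} identifying $[R,R]$ with $F_p[x,y]$, and (ii)$\Rightarrow$(i) via Proposition~\ref{SubringsInA_pCommutative}. One small presentational point: you announce the cycle (i)$\Rightarrow$(ii)$\Rightarrow$(iii)$\Rightarrow$(i) but actually prove (i)$\Rightarrow$(ii), (ii)$\Rightarrow$(i) and (ii)$\Leftrightarrow$(iii); these still establish all the equivalences.
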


\begin{proof}
By definition, (i) implies (ii). By Lemma~\ref{CommutatorIdealofR}, an ideal $I$ contains $[x,y]$ if and only if it contains $[R,R]$, which is equivalent to $R/I$ being commutative. Thus (ii) and (iii) are equivalent. That (ii) implies (i) follows from Proposition~\ref{SubringsInA_pCommutative}.
\end{proof}

So finding minimal noncommutative rings in the third class boils down to finding those homomorphic images of $A$ with $F_p[x,y]$ as their unique minimal ideal. We will show how this can be done iteratively.

\section{Iterative procedure}\label{IterativeProcedure}

\textit{Step 0:} We start with $R_0:=A$ and define
\[A_0\coloneqq\left(\lAnn_{R_0}(p,x,y)\right)\cap\left(\rAnn_{R_0}(p,x,y)\right)=F_p[x,y]+(p,x,y)^{n-1}.\]
Then for each $0\neq w\in A_0$, the ideal generated by $w$ is $F_p w$. Following Corollary~\ref{OnlyHomoImages}, we want these minimal ideals to be contained in $F_p[x,y]$. This will be done in three steps. 

\begin{itemize}[topsep=0pt]
\item[(a)] Extend $\left\lbrace[x,y]\right\rbrace$ to a basis $\left\lbrace[x,y]\right\rbrace\sqcup V_0$ for the $F_p$-vector space $A_0$.

\item[(b)] For each $v\in V_0$, pick a scalar $m_v\in F_p$ and let $I_0$ be the ideal $(v-m_v[x,y])_{v\in V_0}.$

\item[(c)] We then set $R_1\coloneqq R_0/I_0$.
\end{itemize}

By factoring out $I_0$, we unified all the minimal ideals of $R_0$ into $F_p[x,y]$. However doing so may have created new minimal ideals, and we must therefore repeat the process. 

\textit{Step 1:} Take
\[A_1\coloneqq\left(\lAnn_{R_1}(p,x,y)\right)\cap\left(\rAnn_{R_1}(p,x,y)\right)\subset F_p[x,y]+(p,x,y)^{n-2}.\]

\begin{itemize}[topsep=0pt]
\item[(a)] Extend $\left\lbrace [x,y]\right\rbrace$ to a basis $\left\lbrace [x,y]\right\rbrace\sqcup V_1$ for $A_1$. 

\item[(b)] Again, for each $v\in V_1$, pick a scalar $m_v\in F_p$ and let $I_1$ be the ideal $(v-m_v[x,y])_{v\in V_1}$ of $R_1$.

\item[(c)] We then set $R_2\coloneqq R_1/I_1$.
\end{itemize}

\textit{Step $N$:} Repeating the process yields a chain of ideals $I_0\subset I_1\subset\ldots$ which must stop, after $N$ steps say. The corresponding chain of homomorphic images
\[A=R_0\twoheadrightarrow R_1\twoheadrightarrow \ldots\twoheadrightarrow R_N\]
stops at a ring $R_N$ which satisfies 
\[\left(\lAnn_{R_N}(p,x,y)\right)\cap\left(\rAnn_{R_N}(p,x,y)\right)=F_p[x,y].\]

\begin{theorem}\label{ProcedureAllAndOnly}
A homomorphic image $R$ of $A$ is minimal noncommutative if and only if it can be obtained through the iterative procedure above.
\end{theorem}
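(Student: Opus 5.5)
We prove the two implications separately. Throughout we use Corollary~\ref{OnlyHomoImages}: a noncommutative homomorphic image $R$ of $A$ is minimal noncommutative if and only if every nonzero ideal of $R$ contains $[x,y]$. Let $\mathfrak m=(p,x,y)$ denote the image of the maximal ideal in any homomorphic image, and let $S(R)=\lAnn_R(\mathfrak m)\cap\rAnn_R(\mathfrak m)$ be the two-sided socle. We first record the key structural fact. Since $\mathfrak m$ is nilpotent ($\mathfrak m^n=0$), every nonzero ideal $J$ of $R$ meets $S(R)$ nontrivially: take $k$ maximal with $\mathfrak m^kJ\neq0$ and then refine on the other side. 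Conversely, every element $w$ of $S(R)$ generates the ideal $F_pw$, because $R=\mathbb{Z}/p^n\mathbb{Z}+\mathfrak m$ and $pw=0$. Hence the condition of Corollary~\ref{OnlyHomoImages}(iii) is equivalent to
\[
[x,y]\neq 0 \quad\text{and}\quad S(R)=F_p[x,y].
\]

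\emph{Procedure gives minimal rings.} We check inductively that $[x,y]\notin I_k$ at each step. Because $\{[x,y]\}\sqcup V_k$ is a basis of $A_k$ and $A_k$ is annihilated by $\mathfrak m$ on both sides, the ideal $I_k$ generated by the elements $v-m_v[x,y]$ is exactly their $F_p$-span. This span meets $F_p[x,y]$ trivially by linear independence, so $[x,y]\neq0$ in $R_{k+1}$. The terminal ring $R_N$ then satisfies $S(R_N)=F_p[x,y]$ with $[x,y]\ne0$, and is therefore minimal noncommutative by the criterion above. We must also justify that the process terminates. Each $I_k$ is nonzero whenever $V_k\neq\emptyset$, so the rings strictly shrink, and since $A$ is finite the process stops.

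\emph{Minimal rings arise from the procedure.} Let $R=A/J$ be minimal noncommutative. We construct the steps so that $I_k\subset J_k$, where $J_k$ is the image of $J$ in $R_k$. At step $k$, suppose $R_k\twoheadrightarrow R$ has kernel $J_k$. For each $v$ in a complement $V_k$ of $F_p[x,y]$ in $A_k$, its image $\bar v$ in $R$ is still annihilated by $\mathfrak m$ on both sides, so $\bar v\in S(R)=F_p[x,y]$. Thus $\bar v=m_v[x,y]$ for a unique $m_v\in F_p$; uniqueness holds since $[x,y]\neq0$ in $R$. Choosing these $m_v$ gives $I_k\subset J_k$, so $R_{k+1}$ still surjects onto $R$. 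The procedure ends at some $R_N$ with $S(R_N)=F_p[x,y]$ which surjects onto $R$. If the kernel $J_N$ of $R_N\twoheadrightarrow R$ were nonzero, it would meet $S(R_N)=F_p[x,y]$ nontrivially, forcing $[x,y]\in J_N$ and hence $[x,y]=0$ in $R$, a contradiction. So $R\cong R_N$.

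The main obstacle is making rigorous the socle lemma, namely that every nonzero ideal meets the two-sided annihilator. The plan is to take a nonzero $w\in J$ and multiply repeatedly by generators $p$, $x$, $y$ on the left while the product stays nonzero. Nilpotency of $\mathfrak m$ makes this stop at an element of $\lAnn$ lying in $J$, and the same process is then applied on the right. Right multiplication preserves left annihilation, since $\lAnn_R(\mathfrak m)$ is a right ideal, so the final element lies in both annihilators. A secondary subtlety is that the basis choice of $V_k$ in the procedure is arbitrary. This is harmless because $I_k$ depends only on the linear map $A_k\to F_p[x,y]$ determined by the $m_v$, and any such map can be realized by some choice of scalars.
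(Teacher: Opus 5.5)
Your proof is correct and follows essentially the same route as the paper. You show noncommutativity of $R_N$ by identifying each $I_k$ with an $F_p$-span and using the linear independence of $\{[x,y]\}\sqcup V_k$. You use the two-sided nilpotency refinement to show that every nonzero ideal meets $\lAnn_R(p,x,y)\cap\rAnn_R(p,x,y)$. For the converse, you run the procedure with scalars $m_v$ read off from $R$. The only cosmetic difference is at the end: you package the socle argument as a standalone criterion and apply it directly to the kernel of $R_N\twoheadrightarrow R$, whereas the paper invokes the already-proved minimality of $R_N$ at that point.
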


\begin{proof}
We start by showing that a ring obtained as above is minimal noncommutative. We first show that $R\coloneqq R_N=R_{N-1}/I_{N-1}$ is not commutative. Otherwise, the commutator $[x,y]$ belongs to $I_{N-1}$. Take $k$ minimal such that $[x,y]$ belongs to $I_k$. Clearly $k\geq 1$. Then in the ring $R_k=R_{k-1}/I_{k-1}$, we have
\[[x,y]\in I_k=(v-m_v[x,y])_{v\in V_k}=\sum_{v\in V_k}F_p(v-m_v[x,y])\]
where the last equality follows from the fact that each generator of $I_k$ annihilates $(p,x,y)$ on both sides. Write $[x,y]=\sum_{v\in V_k}\alpha_v(v-m_v[x,y])$. Then in the ring $R_k$
\[\left(1+\sum_{v\in V_k}\alpha_v m_v\right)[x,y]-\sum_{v\in V_k}\alpha_vv=0.\]
But by construction, $\lbrace[x,y]\rbrace\sqcup V_k$ is a basis for the $F_p$-vector space $A_k$. In particular the generators must be linearly independent and hence $\alpha_v=0$ for all $v\in V_k$. This implies that $[x,y]=0$ in $R_k$, i.e. $I_{k-1}$ contains $[x,y]$, which is absurd by our choice of $k$. This contradiction shows that $R$ is not commutative.

By Corollary~\ref{OnlyHomoImages}, the ring $R$ is then minimal noncommutative if and only if each of its nontrivial ideals contains $[x,y]$. We will show that $[x,y]$ is indeed contained in the ideal generated by any element $0\neq w\in R$. Since $(p,x,y)^n=0$, we can take $1\leq t\leq n$ minimal such that
\[(p,x,y)^tw=0.\]
By minimality of $t$ there is $r\in(p,x,y)^{t-1}$ such that $rw\neq 0$. Similarly, we take $1\leq t'\leq n$ minimal such that
\[rw(p,x,y)^{t'}=0\]
and pick an $s\in(p,x,y)^{t'-1}$ such that $rws\neq 0$. By construction $rws$ belongs to
\[\left(\lAnn_R(p,x,y)\right)\cap\left(\rAnn_R(p,x,y)\right)=F_p[x,y]\]
where the equality holds because $R$ was obtained as the output of our iterative procedure. In particular, there is $0\neq\beta\in F_p$ such that $rws=\beta[x,y]$ and hence $[x,y]=\beta^{-1}rws$ belongs to the ideal $RwR$ as required.

It remains to prove the other implication in the statement, i.e. that every minimal noncommutative homomorphic image $R$ of $A$ can be obtained via the procedure. By Corollary~\ref{OnlyHomoImages}, $F_p[x,y]$ is the unique minimal ideal of $R$. Thus the projection $A\twoheadrightarrow R$ maps
\[A_0\coloneqq \left(\lAnn_{A}(p,x,y)\right)\cap\left(\rAnn_{A}(p,x,y)\right)=([x,y])+(p,x,y)^{n-1}\]
to $F_p[x,y]$. Extending $\left\lbrace[x,y]\right\rbrace$ to a basis $\left\lbrace[x,y]\right\rbrace\sqcup V_0$ for $A_0$, this means that every $v\in V_0$ is mapped to a (possibly zero) scalar multiple $m_v[x,y]$ where $m_v\in F_p$. In particular, $R$ is a homomorphic image of $R_1=A/\left(v-m_v[x,y]\right)_{v\in V_0}$. Repeating the process, i.e. following the procedure with scalars $m_v$ determined by $R$, we obtain a chain of homomorphic images
\[A\twoheadrightarrow R_1\twoheadrightarrow R_2\twoheadrightarrow\ldots\]
such that, by construction, $R$ is a homomorphic image of every $R_i$. As usual the chain stops, say after $N$ steps. As shown by the first part of the proof, the ring $R_N$ obtained in this way is minimal noncommutative. Therefore, since $R$ is not commutative, the homomorphism $R_N\twoheadrightarrow R$ implies that $R_N=R$ as required.
\end{proof}

In practice, the procedure is easy to follow. At each step, only the substep (a) requires some work, to find a basis for the annihilator $A_i$ of $(p,x,y)$. However, this only involves solving systems of linear equations in $F_p$. The following technical results aim at narrowing the pool of elements amongst which the basis of $A_i$ is to be found, which will yield an upper bound on the number of steps $N$ to complete the procedure. Lemma~\ref{FirstStepsSymmetric} deals with the first $n-2$ steps, Lemma~\ref{Stepn-2} deals with the $(n-1)^\text{th}$ step (if it exists), and Proposition~\ref{AtMostn-1} shows that there cannot be more steps, that is $N\leq n-1$.

\begin{lemma}\label{FirstStepsSymmetric}
At each step $R_i$ of the procedure such that $0\leq i\leq n-3$, we have
\[\lAnn_{R_i}(x)=\rAnn_{R_i}(x)\text{ and }\lAnn_{R_i}(y)=\rAnn_{R_i}(y)\]
and $\Ann_{R_i}(p,x,y)$ is contained in $(p,x,y)^{n-1-i}+F_p[x,y]$.
\end{lemma}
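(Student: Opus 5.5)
We argue by induction on $i$, proving both assertions together. The engine is the graded structure of $A$ from Lemma~\ref{ATwistedPolynomial}, together with the observation that every ideal $I_j$ factored out so far lies in $(p,x,y)^{n-1-j}+F_p[x,y]$. Hence for $i\le n-3$ we have $I_0+\dots+I_{i-1}\subset (p,x,y)^{n-i}+F_p[x,y]$. Modulo $(p,x,y)^{n-i}+F_p[x,y]$, the ring $R_i$ therefore coincides with $A$ modulo the same ideal, which is commutative (the commutator is killed). This is the comparison to be exploited.

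\emph{Annihilators agree.} Let $w\in R_i$ with $wx=0$. Write $w=\alpha x+\beta y+z$ as in Lemma~\ref{A3GenOverCentre}, with $z\in\Z(A)$. Then $wx-xw=\beta[y,x]$. Thus $xw=wx-\beta[y,x]=\beta[x,y]$, and it suffices to show $\beta=0$. Since $z$ is central, $wx=\alpha x^2+\beta yx+zx$. We would lift the relation $wx=0$ to $A$, where it reads $wx\in I_0+\dots+I_{i-1}\subset(p,x,y)^{n-i}+F_p[x,y]$. Now we use the normal form of Lemma~\ref{ATwistedPolynomial}. Write $z=\gamma+z'$ with $\gamma\in\mathbb{Z}/p^n\mathbb{Z}$ and $z'\in(p,x,y)^2$. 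The degree-one part of $wx$ is $\gamma x$, and the degree-two part is $\alpha x^2+\beta yx$ plus terms coming from $\gamma$. Because $n-i\ge 3$, the target ideal only contributes in degrees $\ge 3$ and along $[x,y]$.

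The delicate point is separating $\beta yx$ from $\beta[x,y]=\beta(xy-yx)$. Modulo $F_p[x,y]$, the term $yx$ equals $xy$, so we must track the full degree-two part in the basis $\{x^2,xy,y^2,px,py,p^2,yx\}$. A relation $\beta yx+\alpha x^2+(\ldots)\equiv c[x,y]$ forces the coefficient of $xy$ to be $-c$ and that of $yx$ to be $\beta+c$. This has to be done carefully, and we expect to need $\gamma$ to be nonunit. If $\gamma$ is a unit, $w$ is invertible modulo the maximal ideal, so $wx=0$ forces $x\in(p,x,y)^{n-i}$-type contradictions. When $\gamma=0$, the degree-two coefficients give $\alpha=0$ and $\beta+c=0=c$, so $\beta=0$. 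This coefficient comparison is the main obstacle, because one must make sure the lower-degree layers ($p$-multiples, where $p\cdot[x,y]=0$) do not interfere. We would handle them by working degree by degree and using that $p x$ and $xp$ agree in $A$. The case of $y$ is symmetric, and the reverse inclusion $\rAnn\subset\lAnn$ is handled identically.

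\emph{Annihilator of the maximal ideal.} Let $w\in R_i$ with $w(p,x,y)=(p,x,y)w=0$. Lift $w$ to $A$ in normal form and let $d$ be its lowest degree outside $F_p[x,y]$. If $d<n-1-i$, multiply by $x$, $y$ or $p$ to raise the degree by one. Using Lemma~\ref{ATwistedPolynomial}, some product among $xw$, $yw$, $pw$ has a nonzero component of degree $d+1<n-i$ not lying in $F_p[x,y]$. Such a component cannot lie in $(p,x,y)^{n-i}+F_p[x,y]$, which contradicts the annihilation. Here the nondegeneracy step is that $x^ty^s p^r$ times one of $p,x,y$ is a basis monomial of strictly larger degree. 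The only exception is $yx$, which we treat separately since $x\cdot yx=x^2y$ is still nonzero when $3<n-i$. Hence $d\ge n-1-i$, which is the claim. This also gives the inductive input for step $i+1$: the new generators $v-m_v[x,y]$ lie in $(p,x,y)^{n-1-i}+F_p[x,y]$.
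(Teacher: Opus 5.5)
Your argument is correct, and the containment $\Ann_{R_i}(p,x,y)\subset(p,x,y)^{n-1-i}+F_p[x,y]$ is obtained essentially as in the paper. By induction the kernel of $A\twoheadrightarrow R_i$ lies in $(p,x,y)^{n-i}+F_p[x,y]$. You then multiply a lift by $x$ and compare low-degree terms. This is really a computation in the abelianisation $\mathbb{Z}[x,y]/(p,x,y)^n$, whose associated graded ring is a truncated polynomial ring, so multiplication by $x$ is injective below the top degree.

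Where you differ is the symmetry of the annihilators. The paper runs the degree argument for \emph{one-sided} annihilators of $x$, $y$ and $p$. This puts them in $(p,x,y)^{n-1-i}+F_p[x,y]\subset(p,x,y)^2$ (since $n-1-i\geq 2$), which is central by Lemma~\ref{CentreofA}. So $wx=0$ gives $xw=0$ for free, and the second claim follows at once from $\Ann_{R_i}(p,x,y)\subset\rAnn_{R_i}(x)$.

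You instead compute $[w,x]=\beta[y,x]$ and kill $\beta$ by a separate comparison in the $7$-dimensional space $(p,x,y)^2/(p,x,y)^3$, where $xy$, $yx$ and $px$ are independent. That comparison also settles the case $\gamma\in p\mathbb{Z}/p^n\mathbb{Z}$ that you left hedged. It works, but your second argument only ever uses $xw=0$, so it already places one-sided annihilators in a central ideal. With the paper's ordering, your first computation is redundant.

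One small correction: the trouble with $yx$ is not whether $x\cdot yx$ vanishes. It is that $x\cdot yx=x\cdot xy=x^2y$, so monomial-wise injectivity fails in $A$. Measuring degree in the abelianisation, where $\lambda(xy-yx)$ is absorbed into $F_p[x,y]$, removes this issue.
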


\begin{proof}
It is enough to show that one-sided annihilators of $x$, of $y$ and of $p$ are in the ideal $(p,x,y)^{n-1-i}+F_p[x,y]$, since the latter is central by Lemma~\ref{CentreofA}.
We proceed by induction on $i$. In $R_0=A$, each one-sided annihilator is equal to $(p,x,y)^{n-1}+F_p[x,y]$.

For $1\leq i\leq n-3$, we prove that $\rAnn_{R_i}(x)$ is contained in $(p,x,y)^{n-1-i}+F_p[x,y]$, the other cases being similar. By construction $R_i=A/I_{i-1}$ where, by the induction hypothesis, $I_{i-1}$ is contained in $(p,x,y)^{n-i}+F_p[x,y]$. Then, by Lemma~\ref{ATwistedPolynomial}, any element $a\in R_i$ can be written as
\[a=\sum_{k=0}^{n-i-2}\sum_{0\leq s+t\leq k} \alpha_{s,t}^{(k)}p^sx^ty^{k-s-t}+r\]
where $r\in (p,x,y)^{n-1-i}+F_p[x,y]$ and $\alpha_{s,t}^{(k)}\in\lbrace0,\ldots,p-1\rbrace$. Now if $xa=0$ then
\[xa=\sum_{k=0}^{n-i-2}\sum_{0\leq s+t\leq k} \alpha_{s,t}^{(k)}p^{s}x^{t+1}y^{k-s-t}+xr\in I_{i-1}\subset(p,x,y)^{n-i}+F_p[x,y].\]
Since $xr\in (p,x,y)^{n-i}$, this implies that $(p,x,y)^{n-i}$ contains
\[\sum_{k=0}^{n-i-2}\sum_{0\leq s+t\leq k} \alpha_{s,t}^{(k)}p^{s}x^{t+1}y^{k-s-t}.\]
This is only possible when each $\alpha_{s,t}^{(k)}$ is zero and therefore $a=r$. We have shown that an arbitrary element $a\in\rAnn_{R_i}(x)$ belongs to $(p,x,y)^{n-1-i}+F_p[x,y]$. The same can be done similarly for $\rAnn_{R_i}(y)$, left annihilators, and $\Ann_{R_i}(p)$, and the proof is complete.
\end{proof}

\begin{lemma}\label{Stepn-2}
If $R_{n-2}$ exists then $\left(\lAnn_{R_{n-2}}(p,x,y)\right)\cap \left(\rAnn_{R_{n-2}}(p,x,y)\right)$ is contained in the ideal $(p)+(x,y)^2$.
\end{lemma}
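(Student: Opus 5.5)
Write an arbitrary element $a$ of $R_{n-2}$ via Lemma~\ref{A3GenOverCentre} and Lemma~\ref{CentreofA} as
\[a=c+\alpha x+\beta y+r,\qquad c,\alpha,\beta\in\lbrace0,\ldots,p-1\rbrace,\ r\in(p)+(x,y)^2 .\]
The ideal $(p)+(x,y)^2=(p,x,y)^2+(p)$ is central, because $p$ is central and $(p,x,y)^2$ is central by Lemma~\ref{CentreofA}. Suppose $a$ annihilates $(p,x,y)$ on both sides. We must show $c=\alpha=\beta=0$. The two conditions will be handled separately: $c$ by reducing modulo $(p,x,y)^2$, and $\alpha,\beta$ by a commutator argument.

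\emph{Step 1: $c=0$.} Write $R_{n-2}=A/I_{n-3}$, where $I_{n-3}$ is the preimage in $A$ of the whole chain of ideals. By construction $I_{n-3}$ is generated by elements of the $A_i$ with $i\leq n-3$. By Lemma~\ref{FirstStepsSymmetric} with $i\leq n-3$, each $A_i$ lies in $(p,x,y)^{n-1-i}+F_p[x,y]$. Since $n-1-i\geq 2$ and $[x,y]\in(p,x,y)^2$, we get $I_{n-3}\subset(p,x,y)^2$. Hence $R_{n-2}$ surjects onto $A/(p,x,y)^2$, and this quotient has $\mathbb{Z}/p^2\mathbb{Z}\oplus F_px\oplus F_py$ as additive group. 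Reduce the relation $xa=0$ modulo $(p,x,y)^2$. The terms $\alpha x^2$, $\beta xy$ and $xr$ all lie in $(p,x,y)^2$, so we are left with $cx=0$ in $A/(p,x,y)^2$. This forces $p\mid c$, so $c=0$. The assumption $n\geq3$ ensures that $x\notin(p,x,y)^2$ in $A$.

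\emph{Step 2: $\alpha=\beta=0$.} Since $xa=ax=0$ in $R_{n-2}$, we have $[x,a]=0$. Because $r$ is central, $[x,a]=\beta[x,y]$, so $\beta[x,y]=0$. In the same way $[y,a]=-\alpha[x,y]=0$. If $(\alpha,\beta)\neq(0,0)$, then one of these scalars is invertible modulo $p$ and $p[x,y]=0$, so $[x,y]=0$ in $R_{n-2}$. But this is impossible. The first part of the proof of Theorem~\ref{ProcedureAllAndOnly} shows that $[x,y]\notin I_k$ for every $k$: minimality of $k$ together with the linear independence of $\lbrace[x,y]\rbrace\sqcup V_k$ gives a contradiction. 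So every ring $R_k$ produced by the procedure, in particular $R_{n-2}$, is noncommutative. Hence $\alpha=\beta=0$, and therefore $a=r\in(p)+(x,y)^2$.

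The main care point is Step 1: one must check that $I_{n-3}$ really lies inside $(p,x,y)^2$, which follows from Lemma~\ref{FirstStepsSymmetric}. This is exactly why the statement needs $R_{n-2}$ to arise from the steps $i\leq n-3$ covered by that lemma. The noncommutativity of the intermediate rings used in Step 2 is already contained in the proof of Theorem~\ref{ProcedureAllAndOnly}. Note that only one-sided annihilation by $x$ and $y$, together with the fact that $a$ commutes with them, is actually used.
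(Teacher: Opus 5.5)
Your proof is correct and follows the paper's argument. Both use Lemma~\ref{FirstStepsSymmetric} to place the kernel of $A\twoheadrightarrow R_{n-2}$ inside $(p,x,y)^2$, then kill the coefficients of $x$ and $y$ via $[x,a]=\beta[x,y]$, $[y,a]=-\alpha[x,y]$ and the noncommutativity of $R_{n-2}$. Your Step~1 (reducing $xa=0$ modulo $(p,x,y)^2$ to get $c=0$) and your appeal to the first part of the proof of Theorem~\ref{ProcedureAllAndOnly} just make explicit two points the paper asserts without justification: that the annihilator lies in $(p,x,y)$, and that $R_{n-2}$ is noncommutative.
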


\begin{proof}
If the procedure has not ended before producing $R_{n-2}$ then we have $R_{n-2}=A/I_{n-3}$ for an ideal $I_{n-3}$ which is contained in $(p,x,y)^2$ by Lemma~\ref{FirstStepsSymmetric}. Then the annihilator
\[A_{n-2}:=\left(\lAnn_{R_{n-2}}(p,x,y)\right)\cap\left( \rAnn_{R_{n-2}}(p,x,y)\right)\]
is contained in $(p,x,y)$. Taking an arbitrary element $a$ in $A_{n-2}$ we can write
\[a=\alpha p+\beta x+\gamma y+r\]
with $\alpha,\beta,\gamma\in\lbrace0,\ldots,p-1\rbrace$ and $r\in (p,x,y)^2$. Then, since $a$ annihilates $x$ and $y$, we have
\[0=[x,a]=\gamma[x,y] \text{ and }0=[a,y]=\beta[x,y].\]
Since $R_{n-2}$ is not commutative it follows that $\beta=0$ and $\gamma=0$. Thus $a=\alpha p+r$ and it belongs to $(p)+(x,y)^2$ as desired.
\end{proof}

\begin{proposition}\label{AtMostn-1}
At each step $R_i$ of the procedure, the intersection
\[\Ann_{R_i}(p,x,y)\cap (p,x,y)^{n-i}\]
is either $0$ or $F_p[x,y]$. In particular, the procedure stops after at most $n-1$ steps.
\end{proposition}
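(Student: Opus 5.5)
We argue by induction on $i$, reading $\Ann_{R_i}(p,x,y)$ as $\left(\lAnn_{R_i}(p,x,y)\right)\cap\left(\rAnn_{R_i}(p,x,y)\right)$. Recall from the proof of Theorem~\ref{ProcedureAllAndOnly} that $[x,y]\notin I_i$ at every step. Hence $F_p[x,y]$ is a nonzero one-dimensional subspace of every $R_i$. It is annihilated by $(p,x,y)$ on both sides and lies in every nonzero power $(p,x,y)^k$ with $k\le 2$. So it suffices to show that the intersection is contained in $F_p[x,y]$. For $i=0$ we have $(p,x,y)^n=0$ in $R_0=A$, so the intersection is $0$ and the base case holds.

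For the inductive step, assume the statement for $R_i$ and let $w\in R_{i+1}=R_i/I_i$ lie in $\Ann_{R_{i+1}}(p,x,y)\cap(p,x,y)^{n-i-1}$. Choose a lift $w'\in(p,x,y)^{n-i-1}\subset R_i$. Then each of $pw',xw',yw',w'p,w'x,w'y$ lies in $I_i\cap(p,x,y)^{n-i}$. Since $I_i\subset A_i=\Ann_{R_i}(p,x,y)$, these elements lie in $\Ann_{R_i}(p,x,y)\cap(p,x,y)^{n-i}$. By the induction hypothesis they therefore lie in $F_p[x,y]$.

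They also lie in $I_i$, and $I_i\cap F_p[x,y]=0$ because $[x,y]\notin I_i$. Hence all six products vanish in $R_i$, so $w'\in A_i$. Write $w'=\lambda[x,y]+\sum_{v\in V_i}\mu_v v$ using the basis $\{[x,y]\}\sqcup V_i$. Modulo $I_i$ each $v$ is congruent to $m_v[x,y]$, so $w=\bigl(\lambda+\sum_v\mu_v m_v\bigr)[x,y]\in F_p[x,y]$, as required. The point needing most care is exactly this step: one must know that products landing in $I_i$ and in the deeper power $(p,x,y)^{n-i}$ already vanish in $R_i$. This is why the statement is phrased with the shifting power $(p,x,y)^{n-i}$, so that the induction hypothesis can be applied to those products.

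For the consequence, suppose the procedure reaches $R_{n-1}$. The ring $R_{n-1}$ is local with maximal ideal $(p,x,y)$, and every element outside it is a unit. A unit annihilating $(p,x,y)$ would force $(p,x,y)=0$, contradicting noncommutativity. So $\Ann_{R_{n-1}}(p,x,y)\subset(p,x,y)=(p,x,y)^{n-(n-1)}$. By the first part, $\Ann_{R_{n-1}}(p,x,y)=F_p[x,y]$. Then $V_{n-1}=\emptyset$ and $I_{n-1}=0$, so the procedure stops with $N\le n-1$.
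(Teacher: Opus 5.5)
Your proof is correct and takes essentially the same route as the paper. You lift to $R_i$, observe that the products with $p,x,y$ lie in $I_i\cap(p,x,y)^{n-i}\subset\Ann_{R_i}(p,x,y)\cap(p,x,y)^{n-i}$, hence in $F_p[x,y]$, hence vanish since $[x,y]\notin I_i$, so the lift lies in $A_i$, whose image is $F_p[x,y]$. The differences are only cosmetic: you check left and right products explicitly, and for the final step you use that elements outside $(p,x,y)$ are units, where the paper instead multiplies $b=\alpha+r$ by $[x,y]$ to force $\alpha=0$.
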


\begin{proof}
Clearly the statement holds for $i=0$ as $(p,x,y)^{n}=0$. We proceed by induction on $i$. For an arbitrary element $\overline{a}\in\Ann_{R_i}(p,x,y)\cap (p,x,y)^{n-i}$, we will show that $\overline{a}$ belongs to $F_p[x,y]$. Take an element $a\in(p,x,y)^{n-i}$ in the ring $R_{i-1}$ such that $a$ is mapped to $\overline{a}$ under the natural projection $\pi:R_{i-1}\twoheadrightarrow R_i=R_{i-1}/I_{i-i}$. Since $0=p\overline{a}=\pi(pa)$, we know that $pa$ belongs to the kernel of $\pi$, that is to $I_{i-1}\subset \Ann_{R_{i-1}}(p,x,y)$. Thus $pa$ belongs to
\[\Ann_{R_{i-1}}(p,x,y)\cap (p,x,y)^{n-i+1}\]
and therefore by induction $pa=\alpha[x,y]$ for some $\alpha\in F_p$. Then
\[0=p\overline{a}=\pi(pa)=\pi(\alpha[x,y])=\alpha[x,y]\]
which implies that $\alpha=0$, i.e. $pa=0$. Similarly one can show that $xa=0$ and $ya=0$. Thus $a$ belongs to $\Ann_{R_{i-1}}(p,x,y)$, whose image under $\pi$ is $F_p[x,y]$. Therefore $\pi(a)=\overline{a}$ belongs to $F_p[x,y]$ as required.

Now, if the procedure has not ended before $n-1$ steps then it has produced a ring $R_{n-1}$. Take an arbitrary element $b$ in $\Ann_{R_{n-1}}(p,x,y)$. Write $b=\alpha + r$ with $\alpha\in\lbrace0,\ldots,p-1\rbrace$ and  $r\in (p,x,y)$. Then $0=(\alpha+r)[x,y]=\alpha[x,y]$ and hence $\alpha=0$ as $R_{n-1}$ is not commutative. Therefore
\[\Ann_{R_{n-1}}(p,x,y)=\Ann_{R_{n-1}}(p,x,y)\cap(p,x,y) =F_p[x,y]\]
and the procedure stops at $R_{n-1}$.
\end{proof}

\begin{numberedexample}\label{Generaln=3}
We now follow the procedure in the smallest case $n=3$. We start with
\[R_0=A=\mathbb{Z}\langle x,y\rangle/\left((p,x,y)^3+(p,x,y)[x,y]\mathbb{Z}\langle x,y\rangle+\mathbb{Z}\langle x,y\rangle[x,y](p,x,y)\right).\]
Then $A_0:=\Ann_{R_0}(p,x,y)=(p,x,y)^2$ and we can take as its basis
\[\left\lbrace [x,y]\right\rbrace\sqcup V_0=\left\lbrace [x,y],p^2,px,py,x^2,xy,y^2\right\rbrace.\]
Now we pick scalars
\[m_{p^2},m_{px},m_{py},m_{x^2},m_{xy},m_{y^2}\in F_p\]
and let $I_0$ be the ideal generated by $\left(v-m_v[x,y]\right)_{v\in V_0}$.

We then let $R_1:=R_0/I_0$ and look at $A_1:=\left(\lAnn_{R_1}(p,x,y)\right)\cap\left(\rAnn_{R_1}(p,x,y)\right)$. The ring $R_1$ is minimal noncommutative if and only if $A_1=F_p[x,y]$. Moreover, by Lemma~\ref{Stepn-2} we know that $A_1$ is contained in the ideal $(p)+(x,y)^2=(p)+F_p[x,y]$. Thus $R_1$ is minimal noncommutative if and only if $p$ does not annihilate $(p,x,y)$. There are two possibilities.

\begin{itemize}[topsep=0pt]
\item[(1)] $p(p,x,y)\neq 0$, i.e. at least one of $m_{p^2},m_{px}$ or $m_{py}$ is nonzero. Then $R_1$ is minimal noncommutative.
\item[(2)] $p(p,x,y)=0$, that is $0=p^2=px=py$ i.e. $0=m_{p^2}=m_{px}=m_{py}$. Then $\left\lbrace [x,y],p\right\rbrace$ is a basis for $A_1$. Picking a scalar $m_p\in F_p$ and letting $I_1$ be the ideal of $R_1$ generated by $p-m_p[x,y]$ we then obtain the ring $R_2:=R_1/I_1$. It is easily checked that the annihilator $A_2$ equals $F_p[x,y]$ i.e. $R_2$ is minimal noncommutative. This also follows from Proposition~\ref{AtMostn-1}, as the procedure must end in at most $n-1=2$ steps.
\end{itemize}

Picking for instance $m_{p^2}=1$ and $0=m_{px}=m_{py}=m_{x^2}=m_{xy}=m_{y^2}$ one obtains the minimal noncommutative ring
\[\left(\mathbb{Z}/p^3\mathbb{Z}\right)\langle x,y\rangle/\left(p^2+yx,px,py,x^2,xy,y^2\right).\]
This is our first example of a minimal noncommutative ring whose characteristic is not prime, contrary to the first two classes $U_p$ and $B_{p,q^m,jq^{m-1}}$.
\end{numberedexample}

We have shown that when $n=3$, the basis elements in $V_0\sqcup V_1$ can always be taken to be monomials. This is however not the case in general. We look at another example for which basis elements are not even homogeneous.

\begin{numberedexample}
Let $n=4$. We also assume that $p\neq 2$. As always, we start with
\[R_0=A=\mathbb{Z}\langle x,y\rangle/\left((p,x,y)^4+(p,x,y)[x,y]\mathbb{Z}\langle x,y\rangle+\mathbb{Z}\langle x,y\rangle[x,y](p,x,y)\right).\]
Then $A_0\coloneqq\Ann_{R_0}(p,x,y)=F_p[x,y]+(p,x,y)^3$ and we can take as its basis
\[\left\lbrace [x,y]\right\rbrace\sqcup V_0=\left\lbrace [x,y],p^3,p^2x,p^2y,px^2,pxy,py^2,x^3,x^2y,xy^2,y^3\right\rbrace.\]
Now we pick scalars $m_{x^3}=1,m_{x^2y}=2$ and $m_v=0$ for all the other $v\in V_0$. Let $I_0$ be the ideal generated by $\left(v-m_v[x,y]\right)_{v\in V_0}$.

We then let $R_1\coloneqq R_0/I_0$ and look at the annihilator $A_1\coloneqq \Ann_{R_1}(p,x,y)$. It is contained in $(p,x,y)^2$ by Lemma~\ref{FirstStepsSymmetric}. Clearly, the elements $[x,y],p^2,px,py$ and $y^2$ belong to $A_1$. They generate $(p,x,y)^2$ together with $x^2$ and $xy$. But if $(\alpha x^2+\beta xy)(p,x,y)=0$ then
\[\begin{cases}
(\alpha x^2+\beta xy)p=0\\
(\alpha x^2+\beta xy)x=0\\
(\alpha x^2+\beta xy)y=0
\end{cases} \text{ i.e. }\begin{cases}
\alpha m_{px^2}+\beta m_{pxy}=0\\
\alpha m_{x^3}+\beta m_{x^2y}=0\\
\alpha m_{x^2y}+\beta m_{xy^2}=0
\end{cases} \text{ i.e. }\begin{cases}
0=0\\
\alpha+2\beta=0\\
2\alpha=0
\end{cases}\]
so $\alpha=\beta=0$. Therefore no nonzero linear combination of $x^2$ and $xy$ lies in $A_1$. It follows that $A_1$ has a basis
\[\left\lbrace [x,y]\right\rbrace\sqcup V_1=\left\lbrace [x,y],p^2,px,py,y^2\right\rbrace.\]
We now pick scalars
\[0=m_{p^2}=m_{y^2}\text{ and }m_{px}=-1\text{ and }m_{py}=-2.\]
We let $I_1$ be the ideal generated by $\left(v-m_v[x,y]\right)_{v\in V_1}$ and construct the ring $R_2\coloneqq R_1/I_1$.
In $R_2$, by Lemma~\ref{Stepn-2}, the annihilator $A_2$ is contained in $(p)+(x,y)^2$ which is generated by $p,[x,y],x^2$ and $xy$. Thus, together with $[x,y]$, the basis elements for $A_2$ will be linear combinations of $p,x^2$ and $xy$. If $(\alpha p+\beta x^2+\gamma xy)(p,x,y)=0$ then
\begin{align*}
\begin{cases}
(\alpha p+ \beta x^2+\gamma xy)p=0\\
(\alpha p +\beta x^2+\gamma xy)x=0\\
(\alpha p+\beta x^2+\gamma xy)y=0
\end{cases}&\text{ i.e. }\begin{cases}
\alpha m_{p^2}+\beta m_{px^2}+\gamma m_{pxy}=0\\
\alpha m_{px}+\beta m_{x^3}+\gamma m_{x^2y}=0\\
\alpha m_{py}+\beta m_{x^2y}+\gamma m_{xy^2}=0
\end{cases}\\
&\text{ i.e. }\begin{cases}
0=0\\
-\alpha+\beta+2\gamma=0\\
-2\alpha+2\beta=0
\end{cases}
\end{align*}
i.e. $\alpha=\beta$ and $\gamma=0$. So a basis for $A_2$ is
\[\left\lbrace [x,y]\right\rbrace\sqcup V_2=\left\lbrace [x,y],p+x^2\right\rbrace.\]
Picking for instance $m_{p+x^2}=3$, we let $I_2$ be the ideal generated by $p+x^2-3[x,y]$ and construct the ring $R_3\coloneqq R_2/I_2$. By Proposition~\ref{AtMostn-1}, the procedure stops there and
\[R_3=A/\big(p+x^2-3[x,y],p^2,px+[x,y],py+2[x,y],y^2,x^3-[x,y],x^2y-2[x,y]\big)\]
is a minimal noncommutative ring. Note that it has a defining relation $p+x^2=3[x,y]$, where the left hand side is not homogeneous.
\end{numberedexample}

As we have seen, it is relatively easy to follow the procedure to construct minimal noncommutative rings in the third class. However, many different choices of scalars will actually generate isomorphic rings. The easiest example, which we will now look at, is a swap of generators $x$ and $y$.

\begin{numberedexample}\label{IsomorphicDifferentScalars}
Again we let $n=3$ and we choose scalars
\[m_{px}=1,m_p^2=m_{py}=m_{x^2}=m_{xy}=m_{y^2}=0.\]
Then as we saw in Example~\ref{Generaln=3}, the ring
\[A/\left(px-[x,y],p^2,py,x^2,xy,y^2\right)\]
is minimal noncommutative. It is however easy to see that, under the map that sends $x$ to $y$ and $y$ to $x$, it is isomorphic to the ring
\[A/\left(py+[x,y],p^2,px,x^2,xy,y^2\right)\]
given by the different choice of scalars
\[m_{py}=-1,m_{xy}=1,m_p^2=m_{px}=m_{x^2}=m_{y^2}=0.\]
More generally, under that same map, the minimal noncommutative ring $R$ given by the choice of scalars
\[m_{p^2},m_{px},m_{py},m_{x^2},m_{xy},m_{y^2}\in F_p\]
will be isomorphic to the ring $R'$ given by the choice of scalars
\begin{align*}
&m'_{p^2}=-m_{p^2},\hspace{10pt} m'_{px}=-m_{py},\hspace{10pt} m'_{py}=-m_{px},\hspace{10pt}m'_{x^2}=-m_{y^2},\\
&m'_{xy}=m'_{yx}+1=-m_{xy}+1\hspace{10pt} \text{ and }\hspace{10pt}m'_{y^2}=-m_{x^2}.
\end{align*}
\end{numberedexample}

\begin{remark}
By Theorem~\ref{ProcedureAllAndOnly}, for fixed prime $p$ and index $n$, the (finitely many) different ways of following the procedure will generate all the minimal noncommutative homomorphic images of $A$. From the point of view of classifying minimal noncommutative rings, we are of course only interested in isomorphism classes. It would be interesting to find a way to use the procedure to construct each such isomorphism class, without generating too many representatives of the same class.

As we showed in Lemma~\ref{A3GenOverCentre}, a minimal noncommutative ring in the third class is generated by three elements as a module over its centre. It would be interesting to know which commutative rings appear as the centre of a minimal noncommutative ring in the third class. Our second approach also exploits this proximity to commutative rings.
\end{remark}

\section{Abelianisation}\label{Abelianisation}

As illustrated for instance by Lemma~\ref{ATwistedPolynomial}, the ring $A$ can be seen as only one monomial away from commutative.

\begin{lemma}\label{AModuleIsomorphism}
As a module over $\mathbb{Z}/p^n\mathbb{Z}$ (i.e. as an abelian group) the direct sum
\[\mathbb{Z}[x,y]/(p,x,y)^n\bigoplus F_p\]
is isomorphic to $A$ under the map $(1,0)\mapsto 1, (x^sy^t,0)\mapsto x^sy^t$ and $(0,1)\mapsto [x,y]$.
\end{lemma}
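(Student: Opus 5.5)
The plan is to define the map $\phi$ from $\mathbb{Z}[x,y]/(p,x,y)^n\oplus F_p$ to $A$ on generators and check that it is well defined, additive and bijective, using Lemma~\ref{ATwistedPolynomial}.

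\textbf{Well-definedness and additivity.} As an abelian group, $\mathbb{Z}[x,y]/(p,x,y)^n$ is generated by the monomials $x^sy^t$. The relations among them are exactly those saying that $p^{n-s-t}x^sy^t=0$ for $s+t<n$, together with $x^sy^t=0$ for $s+t\geq n$. Similarly $F_p$ is cyclic of order $p$.

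So define $\phi$ additively by $\phi(x^sy^t,0)=x^sy^t\in A$ (with the convention $x^0y^0=1$) and $\phi(0,1)=[x,y]$. To see that $\phi$ is a well-defined group homomorphism, it suffices to check that the images satisfy the same relations in $A$:
\begin{itemize}
\item $p^{n-s-t}x^sy^t$ lies in $(p,x,y)^n=0$ in $A$, and $x^sy^t=0$ when $s+t\geq n$;
\item $p[x,y]=0$ in $A$, since $(p,x,y)[x,y]=0$.
\end{itemize}
Additivity is then built into the definition.

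\textbf{Bijectivity.} By Lemma~\ref{ATwistedPolynomial}, every element of $A$ is uniquely of the form
\[\alpha yx+\sum_{k<n}\ \sum_{s+t\leq k}\alpha^{(k)}_{s,t}\,p^sx^ty^{k-s-t},\]
with all coefficients in $\{0,\ldots,p-1\}$. Since $yx=xy-[x,y]$, we can rewrite such an element as a $\mathbb{Z}$-combination of the monomials $x^ty^u$ (with $t+u<n$) plus a multiple of $[x,y]$.

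\begin{itemize}
\item \emph{Surjectivity.} Each $x^ty^u$ with $t+u<n$ is the image of $(x^ty^u,0)$, and $[x,y]$ is the image of $(0,1)$. So $\phi$ is onto.
\item \emph{Counting.} Elements of $\mathbb{Z}[x,y]/(p,x,y)^n$ are uniquely of the form $\sum_{k<n}\sum_{s+t\leq k}\alpha^{(k)}_{s,t}\,p^sx^ty^{k-s-t}$, by the $p$-adic expansion of the coefficient of each monomial $x^ty^u$ with $t+u<n$. Hence the domain has order $p^{N}\cdot p$, where $N$ is the number of index triples $(k,s,t)$. By Lemma~\ref{ATwistedPolynomial}, $A$ has the same order $p^{N+1}$.
\end{itemize}
A surjection between finite groups of equal order is injective, so $\phi$ is an isomorphism of $\mathbb{Z}/p^n\mathbb{Z}$-modules.

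\textbf{Main obstacle.} The only delicate point is the bookkeeping in this order count. One must confirm that the uniqueness in Lemma~\ref{ATwistedPolynomial} gives exactly one extra factor of $p$ beyond the commutative count. That factor comes from the $\alpha yx$ term, which corresponds to $[x,y]$.
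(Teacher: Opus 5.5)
Your proposal is correct and takes essentially the same route as the paper, whose proof simply cites Lemma~\ref{ATwistedPolynomial} and says the map is easily checked to be an isomorphism; your relation check, surjectivity argument and order count $p^{N+1}$ supply exactly the details left to the reader there. Injectivity rests entirely on the uniqueness clause of Lemma~\ref{ATwistedPolynomial}, and the paper's proof of that lemma only establishes spanning. Citing it is legitimate, but checking uniqueness independently would require exhibiting a ring of order $p^{N+1}$ that satisfies the defining relations of $A$.
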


\begin{proof}
By Lemma~\ref{ATwistedPolynomial} this map defines a homomorphism of abelian groups which is easily checked to be an isomorphism.
\end{proof}

\begin{remark}\label{Identification}
We shall repeatedly identify elements of $\mathbb{Z}[x,y]/(p,x,y)^n$ with their natural image in $A$ by composing the isomorphism of Lemma~\ref{AModuleIsomorphism} with the identity map onto the first summand, i.e.
\[\mathbb{Z}[x,y]/(p,x,y)^n\xrightarrow{\text{\tiny{\(\begin{pmatrix}
\id\\
0
\end{pmatrix}\)}}}
\mathbb{Z}[x,y]/(p,x,y)^n\bigoplus F_p\to A.\]
In order not to overcomplicate the presentation, we will do this without indicating it with extra notation.
\end{remark}

Now, let $R$ be a homomorphic image of $A$. We want to study minimal noncommutative rings $R$ via their abelianisation $R/([x,y])$. As we saw in Lemma~\ref{ATwistedPolynomial}, every monomial in $R$ is of the form $p^sx^ty^{k-s-t}$, except for $yx$. Moreover, any defining relation of $R$ of the form $yx+u=m[x,y]$ is equivalently written as $xy+u=(m+1)[x,y]$. In other words, the defining relations of $R$ can always be written in a form that does not contain $yx$.

The natural projection $R\twoheadrightarrow R/([x,y])=:C$ yields a commutative ring $C=\mathbb{Z}[x,y]/\mathfrak{a}$ where $\mathfrak{a}$ is an ideal of the ring $\mathbb{Z}[x,y]/(p,x,y)^n$. We will see that the minimal generators of the ideal $\mathfrak{a}$ correspond to the annihilator elements denoted by $v$ in the iterative procedure.

As in Remark~\ref{Identification}, one can embed $\mathfrak{a}$ into $A$. The composition of this map with the surjection of $A$ onto $R$ and then onto $C$ is zero by construction. In particular, the image of $\mathfrak{a}$ under the composition
\[\mathfrak{a}\xrightarrow{\text{\tiny{$\begin{pmatrix}
\id\\
0
\end{pmatrix}$}}} \mathbb{Z}[x,y]/(p,x,y)^n\bigoplus F_p\cong A\twoheadrightarrow R\]
lies in the kernel of the projection $R\twoheadrightarrow C$, i.e. in the ideal $F_p[x,y]$ of $R$. Thus we obtain the following homomorphism of $\mathbb{Z}/p^n\mathbb{Z}$-modules, which we call $\mu$.
\[\mu:\mathfrak{a}\to F_p[x,y]\hookrightarrow F_p\]
where the last map sends $[x,y]$ to $1$ if $[x,y]\neq 0$, and is the zero map otherwise (if and only if $R$ is commutative). The map $\mu$ determines and is determined by the scalars denoted by $m_v$ in the iterative procedure. In the following diagram, both rows are exact and the right hand side square commutes. We have built the homomorphism $\mu$ such that the left hand side square commutes too.
\[\begin{tikzcd}
\mathfrak{a} \arrow[r,hook] \arrow[d,dashrightarrow,"\mu"]& \mathbb{Z}[x,y]/(p,x,y)^n \arrow[d,hook] \arrow[r,twoheadrightarrow]& C \arrow[dd,equal]\\
F_p \arrow[d,twoheadrightarrow] & A\arrow[d,twoheadrightarrow] &\\
F_p[x,y] \arrow[r,hook] & R \arrow[r,twoheadrightarrow]& C
\end{tikzcd}
\]
We will use this construction throughout the section, e.g. by moving elements up and down the squares. We call the ideal $\mathfrak{a}$ and the map $\mu$ the \textit{pair associated} to the ring $R$.

\begin{lemma}
The ring $R$ is uniquely determined by the pair $(\mathfrak{a},\mu)$ associated to it.
\end{lemma}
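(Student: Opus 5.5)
Since $R$ is a homomorphic image of $A$, it is determined by the kernel $K$ of the projection $A\twoheadrightarrow R$. The plan is therefore to reconstruct $K$ explicitly from $(\mathfrak{a},\mu)$, using the identification $A\cong\mathbb{Z}[x,y]/(p,x,y)^n\oplus F_p$ of Lemma~\ref{AModuleIsomorphism}. Write elements of $A$ as pairs $(f,\lambda)$, where $(0,1)$ corresponds to $[x,y]$. We distinguish two cases according to whether $R$ is commutative, which is read off from $\mu$ together with the convention in its definition. In the commutative case $R=C=\mathbb{Z}[x,y]/\mathfrak{a}$, so $R$ is determined by $\mathfrak{a}$. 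The case of interest is $[x,y]\neq 0$ in $R$.

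In that case I claim
\[K=\lbrace (f,\lambda): f\in\mathfrak{a},\ \lambda=\mu(f)\rbrace,\]
where $\mu(f)\in F_p$ is identified with $\mu(f)[x,y]$.

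For the inclusion $\supseteq$, take $f\in\mathfrak{a}$. By construction of $\mu$, the image of $(f,0)$ in $R$ equals $\mu(f)[x,y]$, which is the image of $(0,\mu(f))$. Hence $(f,0)-(0,\mu(f))$ lies in $K$. This says exactly that the left-hand square of the diagram commutes.

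For the inclusion $\subseteq$, let $(f,\lambda)\in K$. Projecting further to $C$ kills $[x,y]$, so the image of $f$ in $C$ is zero, that is $f\in\mathfrak{a}$. Subtracting the element $(f,\mu(f))\in K$ already produced gives $(0,\lambda-\mu(f))\in K$. This means $(\lambda-\mu(f))[x,y]=0$ in $R$. Since $[x,y]\neq0$ in $R$ and $[x,y]$ has additive order $p$, we get $\lambda=\mu(f)$ in $F_p$.

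Thus $K$, and hence $R=A/K$, depends only on $(\mathfrak{a},\mu)$. The main care point is the sign convention: the minus sign above must match the convention of the diagram (whether $\mu$ records $f\mapsto \mu(f)[x,y]$ or its negative). Either way the description of $K$ is a function of $(\mathfrak{a},\mu)$ alone, so the conclusion is unaffected. It also needs to be checked that the commutative case is distinguishable from the pair. Since $\mu$ is defined to be zero exactly when $R$ is commutative, we should regard the pair as also recording whether the target $F_p$ is the zero map by convention. More simply, in the commutative case $K=\mathfrak{a}\oplus F_p$, while in the noncommutative case $K$ is the graph of $\mu$ over $\mathfrak{a}$. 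If the paper intends the pair to determine $R$ up to this commutative/noncommutative flag, I would state that the lemma is applied to noncommutative $R$, which is the setting of interest.
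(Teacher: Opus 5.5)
Your argument is the paper's own: the paper asserts, ``by construction'', that the kernel of $\mathbb{Z}[x,y]/(p,x,y)^n\oplus F_p\cong A\twoheadrightarrow R$ is generated by the elements $(a_i,-\mu(a_i))$, i.e.\ it is the graph of $-\mu$ over $\mathfrak{a}$, and your two inclusions (using that $[x,y]\neq 0$ has additive order $p$) supply the verification the paper omits. You should fix the sign: your $\supseteq$ step produces $(f,-\mu(f))$, so the claim should read $\lambda=-\mu(f)$, and the $\subseteq$ step should subtract $(f,-\mu(f))$.

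Your commutative-case caveat is genuinely needed and is not a formality. One correction there: it is not true that $\mu$ vanishes \emph{exactly} when $R$ is commutative. It can also vanish for noncommutative $R$, and that is precisely why the pair alone is ambiguous. For example, for $n=3$ the pair $\bigl((p)+(x,y)^2,0\bigr)$ arises both from the noncommutative ring $A/(p,x^2,xy,y^2)$ and from its abelianisation, as the paper itself notes before Theorem~\ref{MinimalRingGivesMGPair}. The paper's exactness claim also fails for commutative $R$, where the kernel is $\mathfrak{a}\oplus F_p$ rather than $\mathfrak{a}\oplus 0$. So both the lemma and the paper's proof are valid only for noncommutative $R$, or with the extra commutativity flag you propose.
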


\begin{proof}
We know that $R$ is uniquely determined by the natural projection $A\twoheadrightarrow R$ and hence by the surjective homomorphism
\[\mathbb{Z}[x,y]/(p,x,y)^n\bigoplus F_p\cong A\twoheadrightarrow R.\]
Let $(a_i)_{i\in I}$ be a generating set for the $\mathbb{Z}/p^n\mathbb{Z}$-module $\mathfrak{a}$. Then the sequence
\[(a_i,-\mu(a_i))_{i\in I}\hookrightarrow \mathbb{Z}[x,y]/(p,x,y)^n\bigoplus F_p\twoheadrightarrow R\]
is short exact by construction of $\mathfrak{a}$ and $\mu$.
\end{proof}

Equivalently, identifying every element $a\in \mathfrak{a}$ with its canonical image in $A$ as in Remark~\ref{Identification}, we obtain
\[R=A/\left(a-\mu(a)[x,y]\right)_{a\in\mathfrak{a}}.\]
One can see the analogy with the presentation
\[R=A/\left(v-m_v[x,y]\right)_{v\in V_0\sqcup\ldots\sqcup V_{N-1}}\]
given by the iterative procedure of Section~\ref{IterativeProcedure}.

\begin{lemma}\label{muSurface}
The restriction of $\mu$ to $\mathfrak{a}(p,x,y)$ is zero.
\end{lemma}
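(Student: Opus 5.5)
The plan is to reduce the statement to multiplication by the three generators $p,x,y$, and then to lift each such product to $A$. There we multiply on a well-chosen side so that the identification of Remark~\ref{Identification} is compatible with the product.

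\textbf{Reduction.} The ideal $\mathfrak{a}(p,x,y)$ of the commutative ring $\mathbb{Z}[x,y]/(p,x,y)^n$ is additively generated by the products $ag$ with $a\in\mathfrak{a}$ and $g\in\{p,x,y\}$. Indeed, an element of $(p,x,y)$ is a sum of terms $g\,b$ with $g\in\{p,x,y\}$, and $ab\in\mathfrak{a}$. Since $\mu$ is additive, it suffices to show $\mu(pa)=\mu(xa)=\mu(ay)=0$ for every $a\in\mathfrak{a}$.

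\textbf{Lifting to $A$.} For $a\in\mathfrak{a}$, write $\tilde a\in A$ for its image under Remark~\ref{Identification}. This is the same $\mathbb{Z}/p^n\mathbb{Z}$-linear combination of the normal monomials $x^sy^t$ as $a$. By construction of $\mu$, the image of $\tilde a$ in $R$ equals $\mu(a)[x,y]$. I then compare $\widetilde{ga}$ with the product of $g$ and $\tilde a$ computed in $A$, on the appropriate side:
\begin{itemize}
\item For $p$: $p$ is central, so $p\tilde a=\widetilde{pa}$.
\item For $x$: multiply on the left. We have $x\cdot x^sy^t=x^{s+1}y^t$ in $A$, which is again a normal monomial, so $x\tilde a=\widetilde{xa}$.
\item For $y$: multiply on the right. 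We have $x^sy^t\cdot y=x^sy^{t+1}$, so $\tilde a y=\widetilde{ay}$.
\end{itemize}

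\textbf{Conclusion.} Map these identities to $R$. There $[x,y]$ is annihilated by $(p,x,y)$ on both sides by Lemma~\ref{CommutatorIdealofR} and the definition of $A$. Hence the image of $\widetilde{pa}$ is $p\,\mu(a)[x,y]=0$, and likewise the images of $\widetilde{xa}$ and $\widetilde{ay}$ are $0$. Therefore $\mu(pa)=\mu(xa)=\mu(ay)=0$. Since the elements $pa,xa,ay$ generate $\mathfrak{a}(p,x,y)$ and $\mu$ is additive, $\mu$ vanishes on $\mathfrak{a}(p,x,y)$.

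\textbf{Main obstacle.} The only delicate point is the choice of side. Multiplying a normal monomial by $x$ on the right, or by $y$ on the left, can create the non-normal monomial $yx=xy-[x,y]$. For example, $y\cdot x^1y^0=yx$. That would introduce a spurious multiple of $[x,y]$ and break the identity $\widetilde{ga}=g\tilde a$. Choosing left multiplication for $x$ and right multiplication for $y$ avoids this entirely. This is legitimate because $\mathfrak{a}$ lives in a commutative ring, so $xa=ax$ and $ya=ay$ there.
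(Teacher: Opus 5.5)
Your proof is correct and follows essentially the same route as the paper: write $a$ in the normal form of Lemma~\ref{ATwistedPolynomial}, multiply its lift in $A$ by $p$, $x$ or $y$, and use that $[x,y]$ is annihilated by $(p,x,y)$ in $R$. Two steps the paper leaves to ``similarly'' and ``the result follows'' are made precise in your version: you multiply by $x$ on the left and by $y$ on the right so that no $yx$ term appears, and you reduce explicitly to products with the generators.
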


\begin{proof}
Take any element $a$ in $\mathfrak{a}$. As in Lemma~\ref{ATwistedPolynomial}, we write
\[a=\sum_{k=0}^{n-1}\sum_{0\leq s+t\leq k}\alpha_{s,t}^{(k)}p^sx^ty^{k-s-t}.\]
Then, by definition of $\mathfrak{a}$, we know that in $R$
\[\sum_{k=0}^{n-1}\sum_{0\leq s+t\leq k}\alpha_{s,t}^{(k)}p^sx^ty^{k-s-t}=m[x,y]\]
for some $m\in F_p$. Thus $ax$ maps to
\[\sum_{k=0}^{n-1}\sum_{0\leq s+t\leq k}\alpha_{s,t}^{(k)}p^{s}x^{t+1}y^{k-s-t}\]
in $R$ which is equal to $mx[x,y]=0$. Similarly, $ap$ and $ay$ map to $0$ and the result follows.
\end{proof}

This shows that $\mu$ is defined entirely on the minimal generators of the ideal $\mathfrak{a}$.

\begin{lemma}\label{SocleIntersectsDegree2}
Let $\mathfrak{b}$ be an ideal of $\mathbb{Z}[x,y]/(p,x,y)^n$ properly contained in $(p)+(x,y)^2$ and denote by $B$ the ring $\mathbb{Z}[x,y]/\mathfrak{b}$. Then $\Ann_B(p,x,y)=\soc\left(\mathbb{Z}[x,y]/\mathfrak{b}\right)$ intersects the ideal $(p)+(x,y)^2$ nontrivially.
\end{lemma}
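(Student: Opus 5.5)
We work throughout in the finite commutative local ring $\mathbb{Z}[x,y]/(p,x,y)^n$, whose maximal ideal $\mathfrak{m}=(p,x,y)$ is nilpotent. Set $J=(p)+(x,y)^2$ there. Since $\mathfrak{b}\subsetneq J$, the image $\overline{J}$ of $J$ in $B$ is a nonzero ideal. The key fact is that in a ring with nilpotent maximal ideal, every nonzero ideal meets the socle nontrivially. So the plan is to produce a nonzero element of $\overline{J}$ that is killed by $\mathfrak{m}$.

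\textbf{Step 1 (the filtration).} Choose $k\geq 0$ maximal such that $\overline{J}\,\overline{\mathfrak{m}}^k\neq 0$. Such a $k$ exists because $\overline{J}\neq 0$, i.e.\ $k=0$ works, and $\mathfrak{m}^n=0$ in $B$ bounds $k$ above. Then pick any nonzero $b\in \overline{J}\,\overline{\mathfrak{m}}^k$.

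\textbf{Step 2 ($b$ lies in the socle).} By maximality of $k$ we have $b\,\overline{\mathfrak{m}}\subset \overline{J}\,\overline{\mathfrak{m}}^{k+1}=0$. Hence $b\in\Ann_B(p,x,y)$.

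\textbf{Step 3 ($b$ lies in $\overline{J}$).} We have $b\in\overline{J}$ because $J$ is an ideal, so $J\mathfrak{m}^k\subset J$. Thus $b$ is a nonzero element of $\Ann_B(p,x,y)\cap\overline{J}$, as required.

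It remains to justify the identification $\Ann_B(p,x,y)=\soc(B)$. Every simple $B$-module is $B/\overline{\mathfrak{m}}\cong F_p$, because $B$ is local with residue field $F_p$. The sum of all such submodules is then exactly the set of elements annihilated by $\mathfrak{m}$. This is the standard identification and is implicit in the statement.

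The only point needing care is interpreting ``intersects nontrivially'' as meeting the image of $J$ in $B$, rather than $J$ itself in the ambient ring. One should check that the image is nonzero; this is where the strictness of $\mathfrak{b}\subsetneq J$ is used. Otherwise the argument is routine.
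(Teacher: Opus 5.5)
Your proof is correct, but it filters a different object than the paper does.

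The paper filters the whole ring. It takes the largest $i\geq 1$ with $(p,x,y)^i\not\subset\mathfrak{b}$, so the image of $(p,x,y)^i$ in $B$ is a nonzero subset of $\Ann_B(p,x,y)$. When $i\geq 2$ this image lies inside $(p)+(x,y)^2$ automatically. The case $i=1$, where $(p,x,y)^2\subset\mathfrak{b}$, is handled separately: $p\notin\mathfrak{b}$, since otherwise $\mathfrak{b}\supset (p)+(x,y)^2$, so $p$ itself is a nonzero socle element in $(p)+(x,y)^2$. The strict inclusion $\mathfrak{b}\subsetneq (p)+(x,y)^2$ is needed only in that second case, and the argument relies on the specific shape $(p)+(x,y)^2=(p)+(p,x,y)^2$.

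You instead filter the image $\overline{J}$ by $\overline{J}\,\overline{\mathfrak{m}}^k$. This removes the case split entirely. It also proves the more general fact that every nonzero ideal of $B$ meets $\Ann_B(p,x,y)$. In your version the strictness of $\mathfrak{b}\subsetneq J$ enters only through $\overline{J}\neq 0$, and the particular form of $J$ is never used.

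The paper's route is more hands-on and names explicit socle elements: the last nonzero power of $\mathfrak{m}$, or $p$. Nothing later depends on that, though. The application in Theorem~\ref{MinimalRingGivesMGPair} only needs some $u\in(p)+(x,y)^2\setminus\mathfrak{a}$ with $u(p,x,y)\subset\mathfrak{a}$, so your shorter argument serves equally well.

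Your reading of ``intersects nontrivially'' as meeting the image of $J$ in $B$ agrees with how the paper uses the lemma. Your justification of $\Ann_B(p,x,y)=\soc(B)$ via $B$ being local with residue field $F_p$ is fine.
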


\begin{proof}
Using that $(p,x,y)^n=0$ is contained in $\mathfrak{b}$, we can take $i\geq 1$ maximal such that $(p,x,y)^i\not\subset \mathfrak{b}$. We know that $(p,x,y)^i$ is contained in $\Ann_B(p,x,y)$. If $i$ is at least 2 then $(p,x,y)^i$ is contained in $(p)+(x,y)^2$, yielding the required nontrivial intersection. If $i=1$ then $(p,x,y)^2$ is contained in $\mathfrak{b}$. Then $p$ does not belong to $\mathfrak{b}$ as otherwise $\mathfrak{b}$ contains $(p)+(x,y)^2$, contradicting our choice of $\mathfrak{b}$. Then $p$ is nonzero in $B$ and it belongs to $\Ann_B(p,x,y)$.
\end{proof}

\begin{definition}\label{MinimalGenerating}
Let $p$ be a prime and $n\geq 3$ an integer. We call a pair $\left(\mathfrak{b},\nu\right)$ \textit{minimal generating} if
\begin{itemize}[topsep=0pt]
\item[(1)] $\mathfrak{b}$ is an ideal of $\mathbb{Z}[x,y]/(p,x,y)^n$ such that $(p,x,y)^{n-1}\subset\mathfrak{b}\subset(p)+(x,y)^2$,
\item[(2)] $\nu:\mathfrak{b}\to F_p$ is a homomorphism of abelian groups satisfying $\nu|_{\mathfrak{b}(p,x,y)}=0$, and
\item[(3)] for any element $u$ in the ideal $(p)+(x,y)^2$ of $\mathbb{Z}[x,y]/(p,x,y)^n$ such that $u(p,x,y)$ is contained in $\mathfrak{b}$, either $u$ is in $\mathfrak{b}$ or at least one of $\nu(pu)$, $\nu(xu)$ and $\nu(yu)$ is nonzero.
\end{itemize}
\end{definition}

We will show that minimal generating pairs correspond exactly to minimal noncommutative rings. There is however a single exception: for $n=3$, the minimal noncommutative ring $A/\left(p,x^2,xy,y^2\right)$ yields the same pair as its abelianisation $\mathbb{Z}[x,y]/\left((p)+(x,y)^2\right)$. We therefore need to exclude the latter in the following result.

\begin{theorem}\label{MinimalRingGivesMGPair}
Let $R$ be a homomorphic image of $A$ which is not $\mathbb{Z}[x,y]/\left((p)+(x,y)^2\right)$. Let
\[\mathfrak{a}\lhd \mathbb{Z}[x,y]/(p,x,y)^n \text{ and }\mu:\mathfrak{a}\to F_p\]
be the ideal and map associated to $R$. Then $R$ is minimal noncommutative if and only if $\left(\mathfrak{a},\mu\right)$ is a minimal generating pair.
\end{theorem}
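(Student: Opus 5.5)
We use Corollary~\ref{OnlyHomoImages}: a noncommutative $R$ is minimal noncommutative if and only if every nonzero ideal contains $[x,y]$. The first step is the analogue of the output condition of Theorem~\ref{ProcedureAllAndOnly}. We show that $R$ is minimal noncommutative if and only if $R$ is noncommutative and
\[\Ann_R(p,x,y):=\lAnn_R(p,x,y)\cap\rAnn_R(p,x,y)=F_p[x,y].\]
The forward direction holds because each nonzero $w$ in this annihilator generates the ideal $F_pw$, which must then contain $[x,y]$, so $w\in F_p[x,y]$. The converse is the argument in the proof of Theorem~\ref{ProcedureAllAndOnly}: starting from any $w\neq 0$, multiply on the left and then on the right to obtain a nonzero element $rws$ of the annihilator. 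We then translate each condition into a property of the pair $(\mathfrak{a},\mu)$, moving elements up and down the diagram.

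\emph{Noncommutativity.} $R$ is commutative if and only if $\mu=0$ and $[x,y]=0$ in $R$. Suppose $R$ is noncommutative. Then $\mathfrak{a}\subset(p)+(x,y)^2$: if $u=\alpha p+\beta x+\gamma y+r\in\mathfrak{a}$, then $u$ is central modulo $F_p[x,y]$, so commuting with $x$ and $y$ forces $\beta=\gamma=0$. The constant term vanishes because $\alpha+\ldots$ with $\alpha$ a unit would make $[x,y]=0$. Moreover, $(p,x,y)^{n-1}\subset\mathfrak{a}$, since $(p,x,y)^{n-1}$ annihilates $(p,x,y)$ and lies in the annihilator, hence in $F_p[x,y]$ when $R$ is minimal noncommutative. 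Condition (2) of Definition~\ref{MinimalGenerating} is Lemma~\ref{muSurface}.

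\emph{Condition (3).} Take $u\in(p)+(x,y)^2$ with $u(p,x,y)\subset\mathfrak{a}$. By Lemma~\ref{CentreofA}, $u$ is central, so in $R$ we have $pu=\nu(pu)[x,y]$, $xu=\nu(xu)[x,y]$ and $yu=\nu(yu)[x,y]$. If all three scalars vanish, then $u$ lies in $\Ann_R(p,x,y)$. Minimality then forces $u\in F_p[x,y]$ in $R$, that is $u\in\mathfrak{a}$; this gives (3). Conversely, assume $(\mathfrak{a},\mu)$ is minimal generating and let $w\in\Ann_R(p,x,y)$. Write $w=u+c[x,y]$ with $u$ in the commutative part, using Lemma~\ref{AModuleIsomorphism}.

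Showing that $u\in(p)+(x,y)^2$ is essentially the argument of Lemma~\ref{Stepn-2}. It uses that $R$ is noncommutative, which we obtain from (1)--(2): were $[x,y]=0$, the relations would force $\mathfrak{a}$ to be everything, since $\mathfrak{a}\subset(p)+(x,y)^2$ and $[x,y]\neq0$ in $A$ modulo the images of $a-\mu(a)[x,y]$. This is exactly where the excluded ring matters. Once $u\in(p)+(x,y)^2$, we have $u(p,x,y)\subset\mathfrak{a}$ with $\nu$ vanishing on it, so (3) gives $u\in\mathfrak{a}$ and hence $w\in F_p[x,y]$.

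The main obstacle is the noncommutativity bookkeeping, namely proving that $[x,y]\neq 0$ in $R=A/(a-\mu(a)[x,y])_{a\in\mathfrak{a}}$ whenever $\mathfrak{a}\subsetneq(p)+(x,y)^2$, or whenever $\mathfrak{a}=(p)+(x,y)^2$ but $R$ is not the excluded abelian ring. I would first try a linear-independence argument as in Theorem~\ref{ProcedureAllAndOnly}. The generators $a-\mu(a)[x,y]$ span the ideal as an abelian group, modulo $\mathfrak{a}(p,x,y)$ on which $\mu$ vanishes. Their first coordinates in the decomposition of Lemma~\ref{AModuleIsomorphism} lie in the proper submodule $\mathfrak{a}$, and that decomposition is what shows $(0,1)$ is not in their span. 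I would then handle the boundary case $n=3$, $\mathfrak{a}=(p)+(x,y)^2$ separately, where $R$ is either $A/(p,x^2,xy,y^2)$-type or the excluded abelian ring. Lemma~\ref{SocleIntersectsDegree2} can be used in the converse direction, to guarantee that some $u$ tests condition (3) when $\mathfrak{a}$ is proper.
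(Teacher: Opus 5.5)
Your overall architecture matches the paper's proof. The forward direction is essentially the paper's argument: the inclusion $\mathfrak{a}\subset(p)+(x,y)^2$ via commutators with $x$ and $y$, then $(p,x,y)^{n-1}\subset\mathfrak{a}$ by minimality, (2) from Lemma~\ref{muSurface}, and (3) by showing that $\overline{u}$ annihilates $(p,x,y)$ on both sides. So is the second half of the converse, where you push an annihilator element into $(p)+(x,y)^2$ by commutators and apply (3). One small correction: in the forward direction, the point is that $u$ \emph{equals} an element of $F_p[x,y]$ in $R$ and is therefore central. Being ``central modulo $F_p[x,y]$'' is vacuous, since $R/F_p[x,y]$ is commutative.

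The genuine gap is the noncommutativity of $R$ in the direction ``minimal generating $\Rightarrow$ minimal noncommutative''. You claim it follows from (1)--(2), via linear independence in $A/(a-\mu(a)[x,y])_{a\in\mathfrak{a}}$. That argument only shows that this \emph{presented} ring is noncommutative. It identifies the presented ring with $R$ only if $\ker(A\twoheadrightarrow R)=\{a-\mu(a)[x,y]:a\in\mathfrak{a}\}$, and this holds exactly when $[x,y]\neq 0$ in $R$, which is what you are trying to prove; when $R$ is commutative the kernel also contains $[x,y]$. The claim itself is false. Take any ideal $\mathfrak{a}$ with $(p,x,y)^{n-1}\subset\mathfrak{a}\subset(p)+(x,y)^2$. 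The commutative ring $\mathbb{Z}[x,y]/\mathfrak{a}$ is a homomorphic image of $A$, and its associated pair is $(\mathfrak{a},0)$, which satisfies (1) and (2). So (1)--(2) cannot force noncommutativity, and your assertion that ``the relations would force $\mathfrak{a}$ to be everything'' is wrong. If your argument worked, the excluded ring would play no role at all.

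The missing idea is that noncommutativity comes from condition (3).
\begin{itemize}
\item If $\mathfrak{a}\subsetneq(p)+(x,y)^2$, Lemma~\ref{SocleIntersectsDegree2} gives $u\in\left((p)+(x,y)^2\right)\setminus\mathfrak{a}$ with $u(p,x,y)\subset\mathfrak{a}$. Condition (3) then forces one of $\mu(pu),\mu(xu),\mu(yu)$ to be nonzero. But $\mu=0$ by construction whenever $R$ is commutative, so $R$ is noncommutative.
\item If $\mathfrak{a}=(p)+(x,y)^2$, a commutative $R$ would equal $C=\mathbb{Z}[x,y]/\mathfrak{a}$, which is precisely the excluded ring. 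Note that this case arises for every $n\geq 3$, not only for $n=3$.
\end{itemize}
Your closing sentence mentions Lemma~\ref{SocleIntersectsDegree2}, but it has to be used exactly here, to obtain $\mu\neq 0$, in place of the circular presentation argument.
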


\begin{proof}
First, assume that $R$ is minimal noncommutative and construct from $R$ the commutative ring $C$, the ideal $\mathfrak{a}$ and the map $\mu$ as above. We start by showing that the ideal $\mathfrak{a}$ satisfies condition (1) of Definition~\ref{MinimalGenerating}.

If $0\neq w\in(p,x,y)^{n-1}\lhd R$ then the ideal $(w)$ is a minimal ideal and, by minimality of $R$, we have $(w)=F_p[x,y]$. Thus the ideal $(p,x,y)^{n-1}\lhd R$ projects to zero under the map $R\twoheadrightarrow C$. This shows that $(p,x,y)^{n-1}\subset \mathfrak{a}$. Clearly, $\mathfrak{a}$ must be contained in $(p,x,y)$. We will show it is contained in $(p)+(x,y)^2$. That is, taking an arbitrary element $a\in(p,x,y)\lhd R$, we will show that if $a$ is mapped to zero under $R\twoheadrightarrow C$ then $a\in(p)+(x,y)^2$. We can write
\[a=\alpha p+\beta x +\gamma y+r\]
with $\alpha,\beta,\gamma\in\lbrace0,\ldots,p-1\rbrace$ and $r\in (p,x,y)^2$. Since $a$ is mapped  to zero in $C$, we know that $a\in F_p[x,y]$. Thus $0=[x,a]=\gamma[x,y]$ and $0=[a,y]=\beta[x,y]$ i.e. $\beta=\gamma=0$ as required.

By Lemma~\ref{muSurface}, the map $\mu$ satisfies condition (2) of Definition~\ref{MinimalGenerating}. Finally we show that the pair $(\mathfrak{a},\mu)$ satisfies condition (3). Take an element $u$ in the ideal $(p)+(x,y)^2$ of $\mathbb{Z}[x,y]/(p,x,y)^n$ such that $u(p,x,y)$ is in $\mathfrak{a}$ and $\mu(pu)=\mu(xu)=\mu(yu)=0$. We will show that $u$ must belong to $\mathfrak{a}$. As in Remark~\ref{Identification}, we identify $u$ with its natural image $\overline{u}$ in the ideal $(p)+(x,y)^2$ of $R$. By construction, we obtain in the ring $R$
\[x\overline{u}=\mu(xu)[x,y]=0.\]
Since $\overline{u}$ belongs to $(p)+(x,y)^2$, it is central by Lemma~\ref{CentreofA}. It follows that $\overline{u}x$ is also zero. Similarly, $p$ and $y$ annihilate $\overline{u}$ from both sides. Thus, the ideal of $R$ generated by $\overline{u}$ is either minimal or zero. Since $R$ is minimal noncommutative, this implies that $\overline{u}$ belongs to the ideal $F_p[x,y]$. Therefore $\overline{u}$ maps to zero under the map $R\twoheadrightarrow C$, i.e. $u$ belongs to $\mathfrak{a}$ as required.

We now prove that if $R$ yields a minimal generating pair $(\mathfrak{a},\mu)$, then $R$ must be minimal noncommutative. We first show that $R$ is not commutative. If $\mathfrak{a}=(p)+(x,y)^2$ this follows from the fact that we assumed that $R$ is not $\mathbb{Z}[x,y]/\left((p)+(x,y)^2\right)$. So we can assume that $\mathfrak{a}$ is properly contained in $(p)+(x,y)^2$. By Lemma~\ref{SocleIntersectsDegree2}, there exists an element $u$ in $(p)+(x,y)^2$ such that $u$ is not contained in $\mathfrak{a}$ but $u(p,x,y)$ is in $\mathfrak{a}$. Since the pair $\left(\mathfrak{a},\mu\right)$ is minimal generating, condition~(3) of Definition~\ref{MinimalGenerating} implies that one of $\mu(pu)$, $\mu(xu)$ and $\mu(yu)$ is nonzero. In particular, the map $\mu$ is not zero. However, if $R$ is commutative then $R=C=\mathbb{Z}[x,y]/\mathfrak{a}$ and the map $\mu$ is zero by construction. So $R$ is not commutative and it suffices to show that $F_p[x,y]$ is its unique minimal ideal by Corollary~\ref{OnlyHomoImages}.

Now take $w\in R$ such that the ideal $(w)\lhd R$ is minimal. We will show that the projection $R \twoheadrightarrow C$ maps $w$ to zero. We first deal with the case when $w$ does not belong to $(p)+(x,y)^2$. We can then write $w=\alpha x+\beta y + r$ for $\alpha,\beta\in\lbrace 0,\ldots,p-1\rbrace$ and $r\in (p)+(x,y)^2$ with at least one of $\alpha$ and $\beta$ not zero. Then the ideal $(w)$ contains $[w,y]=\alpha[x,y]$ and $[x,w]=\beta[x,y]$, and therefore $[x,y]$. Since $(w)$ is a minimal ideal it follows that $(w)=F_p[x,y]$ as required.

Finally, if $w$ belongs to $(p)+(x,y)^2$ then so does its image $\overline{w}$ under $R\twoheadrightarrow C$. Since $w$ generates a minimal ideal, we know that $w\in\Ann_R(p,x,y)$ and hence $\overline{w}$ belongs to $\Ann_C(p,x,y)\cap\left((p)+(x,y)^2\right)$. Take an element $u$ in the preimage of $\overline{w}$ under the natural projection $\mathbb{Z}[x,y]/(p,x,y)^n\twoheadrightarrow C$. Then $u(p,x,y)$ is in $\mathfrak{a}$ and $0=pw=xw=yw$ implies that $\mu(pu)=\mu(xu)=\mu(yu)=0$. Since the pair $\left(\mathfrak{a},\mu\right)$ is minimal generating, condition~(3) of Definition~\ref{MinimalGenerating} implies that $u$ belongs to $\mathfrak{a}$. Thus $\overline{w}$ is zero in $C$, i.e. $w$ belongs to $F_p[x,y]$ as required.
\end{proof}

As a result, given a homomorphic image of $A$, one can check whether it is minimal noncommutative purely within the bounds of commutative algebra, by constructing its associated pair. On the other hand, one can start from the commutative setting to construct any minimal noncommutative homomorphic image of $A$. Again, we implicitly use Remark~\ref{Identification} to identify elements of $\mathbb{Z}[x,y]/(p,x,y)^n$ with their natural image in $A$.

\begin{corollary}\label{MinimalPairGivesMinimalRing}
Let $(\mathfrak{b},\nu)$ be a minimal generating pair. Then the ring
\[R\coloneqq A/\left(a-\nu(a)[x,y]\right)_{a\in\mathfrak{b}}\]
is minimal noncommutative.
\end{corollary}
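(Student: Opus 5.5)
The plan is to reduce to Theorem~\ref{MinimalRingGivesMGPair}. I will show that the pair associated to $R$ is exactly $(\mathfrak{b},\nu)$, and that $R$ is not the excluded commutative ring $\mathbb{Z}[x,y]/\left((p)+(x,y)^2\right)$.

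First I compute the abelianisation $C=R/([x,y])$. Killing $[x,y]$ in $R$ gives $A/\left([x,y]+\mathfrak{b}\right)$, which is $\mathbb{Z}[x,y]/\mathfrak{b}$ by Lemma~\ref{AModuleIsomorphism}. So the associated ideal $\mathfrak{a}$ equals $\mathfrak{b}$.

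Next, for $a\in\mathfrak{b}$ the relation $a=\nu(a)[x,y]$ holds in $R$. Hence $\mu(a)=\nu(a)$, provided $[x,y]\neq 0$ in $R$ (so that the identification $F_p[x,y]\cong F_p$ is used). The main obstacle is therefore to show that $[x,y]\neq0$ in $R$, that is, that the ideal $J$ of $A$ generated by the elements $a-\nu(a)[x,y]$ does not contain $[x,y]$.

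To handle this obstacle, I will show that the set $J':=\{a-\nu(a)[x,y]: a\in\mathfrak{b}\}$, viewed inside $A$ via Remark~\ref{Identification}, is already a two-sided ideal.
\begin{itemize}[topsep=0pt]
\item It is an additive subgroup, because $\nu$ is additive and $p[x,y]=0$.
\item Multiply by a generator $g\in\{p,x,y\}$. Since $\mathfrak{b}\subset(p)+(x,y)^2$ consists of central elements (Lemma~\ref{CentreofA}), and $[x,y]$ is killed by $(p,x,y)$, we get $g(a-\nu(a)[x,y])=ga$.
\item The product $ga$ in $A$ is the image of the commutative product $ga\in\mathfrak{b}$. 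This holds because multiplying a monomial $x^sy^t$ with $s+t\ge 2$ by $x$ or $y$ gives the corresponding monomial without a commutator term, again by centrality. Also $p\in(p)$ is central.
\item Condition~(2) gives $\nu(ga)=0$, so $ga=ga-\nu(ga)[x,y]\in J'$.
\end{itemize}
Since $A$ is generated by $1,x,y$, the set $J'$ is a two-sided ideal. Thus $J=J'$.

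Now suppose $[x,y]\in J'$, say $[x,y]=a-\nu(a)[x,y]$. Under the direct sum decomposition of Lemma~\ref{AModuleIsomorphism}, the first coordinate forces $a=0$, and then $[x,y]=0$ in $A$, which is false. So $[x,y]\notin J$ and $R$ is noncommutative. In particular $R\neq\mathbb{Z}[x,y]/\left((p)+(x,y)^2\right)$.

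Hence $(\mathfrak{a},\mu)=(\mathfrak{b},\nu)$ is a minimal generating pair, and Theorem~\ref{MinimalRingGivesMGPair} shows that $R$ is minimal noncommutative.
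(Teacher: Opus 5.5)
Your proposal is correct and takes the same route as the paper, which simply asserts that $(\mathfrak{b},\nu)$ is the pair associated to $R$ and then invokes Theorem~\ref{MinimalRingGivesMGPair}. Your check that $\{a-\nu(a)[x,y]: a\in\mathfrak{b}\}$ is already a two-sided ideal of $A$ supplies the details the paper calls ``easily checked''. It gives $[x,y]\neq 0$ in $R$, which yields $\mu=\nu$, and it also rules out the excluded commutative ring, a point the paper does not mention.
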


\begin{proof}
It is easily checked that $(\mathfrak{b},\nu)$ is the pair associated to the ring $R$ and the result follows from Theorem~\ref{MinimalRingGivesMGPair}.
\end{proof}

We can now easily construct minimal noncommutative rings of arbitrarily large characteristic.

\begin{numberedexample}
Let $p$ be a prime number and $n\geq 4$ an integer. In the ring $\mathbb{Z}[x,y]/(p,x,y)^n$, consider the ideal
\[\mathfrak{a}=\left(p^{n-1},x^{n-1},y^{n-1},px,py,xy\right).\]
Let $\mu:\mathfrak{a}\to F_p$ be the homomorphism of abelian groups given by
\[\mu(p^{n-1})=1,\qquad\mu(x^{n-1})=1,\qquad\mu(y^{n-1})=1,\qquad\mu(px)=\mu(py)=\mu(xy)=0,\]
and $\mu|_{\mathfrak{a}(p,x,y)}=0$.
We show that $(\mathfrak{a},\mu)$ is a minimal generating pair. By construction, the ideal $\mathfrak{a}$ and the map $\mu$ satisfy the conditions (1) and (2) of Definition~\ref{MinimalGenerating}, so we need only check condition (3). The socle of $\mathbb{Z}[x,y]/\mathfrak{a}$ is generated by $p^{n-2}, x^{n-2}$ and $y^{n-2}$. Thus, if $u$ is an element of $(p)+(x,y)^2$ such that $u(p,x,y)$ is contained in $\mathfrak{a}$, we can write
\[u=\alpha_1 p^{n-2}+\alpha_2 x^{n-2}+\alpha_3 y^{n-2}+a\]
for $\alpha_1,\alpha_2,\alpha_3\in\lbrace 0,\ldots,p-1\rbrace$ and $a\in\mathfrak{a}$. Then using the fact that $\mu$ is zero on the interior $\mathfrak{a}(p,x,y)$, we have
\[\mu(pu)=\alpha_1,\quad \mu(xu)=\alpha_2 \quad \text{ and }\quad \mu(yu)=\alpha_3.\]
All three are zero if and only if $\alpha_1=\alpha_2=\alpha_3=0$, i.e. $u$ belongs to $\mathfrak{a}$.

It follows that $(\mathfrak{a},\mu)$ is a minimal generating pair and therefore the ring
\[A/\left(p^{n-1}-[x,y],x^{n-1}-[x,y],y^{n-1}-[x,y],px,py,xy\right)\]
is minimal noncommutative by Corollary~\ref{MinimalPairGivesMinimalRing}. Its characteristic is $p^{n}$.
\end{numberedexample}

By Theorem~\ref{MinimalRingGivesMGPair} and Corollary~\ref{MinimalPairGivesMinimalRing}, describing minimal noncommutative rings in the third class is equivalent to describing minimal generating pairs. It is then natural to ask which ideals $\mathfrak{a}$ appear in a minimal generating pair, or in other words which commutative rings appear as $R/([x,y])$ where $R$ is a minimal noncommutative ring in the third class. In the following Proposition, we use the commutative colon notation $\left(I:J\right)\coloneqq\lbrace r:rJ\subset I\rbrace$.

\begin{proposition}\label{IdealsInMinimalPairs}
Let $\mathfrak{b}$ be an ideal of $\mathbb{Z}[x,y]/(p,x,y)^n$ for which there exists a homomorphism $\nu:\mathfrak{b}\to F_p$ such that $\left(\mathfrak{b},\nu\right)$ is a minimal generating pair. Denote by $B$ the ring $\mathbb{Z}[x,y]/\mathfrak{b}$ and by $k$ the dimension $\dim_{F_p}\soc(B)\cap\left((p)+(x,y)^2\right)$. Then
\begin{itemize}[topsep=0pt]
\item[(i)] $k\leq 3$;
\item[(ii)] $k\leq \dim_{F_p}\soc(B)\leq k+2$, and
\item[(iii)] $\left(\mathfrak{b}(p,x,y):(p,x,y)\right)=\mathfrak{b}$.
\end{itemize}
\end{proposition}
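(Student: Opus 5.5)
The plan is to encode condition~(3) of Definition~\ref{MinimalGenerating} as the injectivity of an $F_p$-linear map. Parts (i) and (iii) should then follow quickly, and (ii) is a dimension count. Throughout, write $\mathfrak{m}=(p,x,y)$ and $\mathfrak{q}=(p)+(x,y)^2$, both ideals of $\mathbb{Z}[x,y]/(p,x,y)^n$.

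\emph{Part (i).} Let $S=\lbrace u\in\mathfrak{q}: u\mathfrak{m}\subset\mathfrak{b}\rbrace$. Then $S/\mathfrak{b}$ is exactly $\soc(B)\cap\mathfrak{q}B$. It is killed by $\mathfrak{m}$, hence is an $F_p$-vector space of dimension $k$. Define
\[\varphi: S/\mathfrak{b}\to F_p^3,\qquad u+\mathfrak{b}\mapsto\left(\nu(pu),\nu(xu),\nu(yu)\right).\]
This map is well defined: if $u\in\mathfrak{b}$, then $pu,xu,yu\in\mathfrak{b}\mathfrak{m}$, and $\nu$ vanishes there by condition~(2). It is clearly additive, hence $F_p$-linear. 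Condition~(3) says precisely that $\ker\varphi=0$. Therefore $k\leq 3$.

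\emph{Part (ii).} The lower bound $k\leq\dim_{F_p}\soc(B)$ is trivial. For the upper bound, first note that $1\notin\soc(B)$, and more generally no element with a unit component lies in $\soc(B)$. Indeed, $x\neq 0$ in $B$ because $\mathfrak{b}\subset\mathfrak{q}$, so no unit of the local ring $B$ annihilates $\mathfrak{m}$. Hence $\soc(B)\subset\mathfrak{m}B$. Now compose the inclusion with the projection
\[\mathfrak{m}B\to \mathfrak{m}B/\mathfrak{q}B\cong \mathfrak{m}/\mathfrak{q}=F_px\oplus F_py.\]
The last isomorphism uses $\mathfrak{b}\subset\mathfrak{q}$. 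The kernel of this composite restricted to $\soc(B)$ is $\soc(B)\cap\mathfrak{q}B$, which has dimension $k$, and the image has dimension at most $2$. This gives $\dim\soc(B)\leq k+2$.

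\emph{Part (iii).} The inclusion $\mathfrak{b}\subset(\mathfrak{b}\mathfrak{m}:\mathfrak{m})$ is trivial. Conversely, take $r$ with $r\mathfrak{m}\subset\mathfrak{b}\mathfrak{m}$. Write $r=c+\alpha x+\beta y+s$ with $c\in\mathbb{Z}$, $\alpha,\beta\in\lbrace0,\ldots,p-1\rbrace$ and $s\in\mathfrak{q}$. The main point, and the step needing care, is to show that $r\in\mathfrak{q}$. For this, observe
\[\mathfrak{b}\mathfrak{m}\subset\mathfrak{q}\mathfrak{m}=(p^2,px,py)+(x,y)^3.\]
Then $rx=cx+\alpha x^2+\beta xy+sx$ must lie in $(p)+(x,y)^3$. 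In $\mathbb{Z}[x,y]/(p,x,y)^n$ with $n\geq3$, the monomials $x$, $x^2$ and $xy$ are independent modulo $(p)+(x,y)^3$; this can be checked, for instance, by passing to $F_p[x,y]/(x,y)^3$. Hence $p\mid c$ and $\alpha=\beta=0$, so $r\in\mathfrak{q}$.

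Now $r\mathfrak{m}\subset\mathfrak{b}$, and $pr,xr,yr\in\mathfrak{b}\mathfrak{m}$ gives $\nu(pr)=\nu(xr)=\nu(yr)=0$ by condition~(2). Condition~(3) then forces $r\in\mathfrak{b}$, which proves the reverse inclusion.
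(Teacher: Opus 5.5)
Your proof is correct and follows the paper's argument in all three parts. For (i), the paper finds a nontrivial kernel vector of the $3\times 4$ matrix of values $\nu(pv_i),\nu(xv_i),\nu(yv_i)$, which is your injectivity of $\varphi$ phrased elementwise; (ii) is handled the same way, and (iii) likewise first forces $u\in(p)+(x,y)^2$ via $\mathfrak{b}(p,x,y)\subset(p^2,px,py)+(x,y)^3$ and then applies condition (3). Your version is slightly more careful, since you explicitly justify $\soc(B)\subset(p,x,y)B$ and dispose of the constant term in (iii), points the paper takes for granted.
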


\begin{proof}
For (i), let $v_1,\ldots,v_4$ be four elements of $(p)+(x,y)^2$ such that $v_i(p,x,y)$ lies in $\mathfrak{b}$ for each $1\leq i\leq 4$. We will show that a nonzero linear combination of $v_1,\ldots,v_4$ lies in $\mathfrak{b}$. Consider the $3\times 4$ matrix over $F_p$
\[\begin{pmatrix}
\nu(pv_1)&\nu(pv_2)&\nu(pv_3)&\nu(pv_4)\\
\nu(xv_1)&\nu(xv_2)&\nu(xv_3)&\nu(xv_4)\\
\nu(yv_1)&\nu(yv_2)&\nu(yv_3)&\nu(yv_4)
\end{pmatrix}.\]
Take $\gamma_1,\gamma_2,\gamma_3,\gamma_4\in\lbrace 0,\ldots,p-1\rbrace$ not all zero such that $\left(\gamma_1,\gamma_2,\gamma_3,\gamma_4\right)^\top$ lies in its right kernel. Then, by construction, the nonzero linear combination
\[v\coloneqq\gamma_1 v_1+\gamma_2v_2+\gamma_3v_3+\gamma_4v_4\]
satisfies $\nu(pv)=\nu(xv)=\nu(yv)=0$. Since $\left(\mathfrak{b},\nu\right)$ is a minimal generating pair, condition (3) of Definition~\ref{MinimalGenerating} implies that $v$ belongs to $\mathfrak{b}$ as required.

For (ii), the first inequality is trivial as one vector space contains the other. We prove the second inequality. Take any three elements $w_1,w_2$ and $w_3$ of $\soc(B)$. We will show that a nonzero linear combination of $w_1,w_2$ and $w_3$ lies in $\soc(B)\cap\left((p)+(x,y)^2\right)$. Write
\[w_1=\alpha_1 x+\beta_1 y +r_1,\qquad w_2=\alpha_2 x+\beta_2 y +r_2,\qquad \text{ and }\qquad w_3=\alpha_3 x+\beta_3 y +r_3\]
for $\alpha_i,\beta_i\in\lbrace 0,\ldots,p-1\rbrace$ and $r_1,r_2,r_3\in (p)+(x,y)^2$. Take $\gamma_1,\gamma_2,\gamma_3\in\lbrace 0,\ldots,p-1\rbrace$ not all zero such that
\[\begin{pmatrix}
\alpha_1&\alpha_2&\alpha_3\\
\beta_1&\beta_2&\beta_3
\end{pmatrix}
\begin{pmatrix}
\gamma_1\\
\gamma_2\\
\gamma_3
\end{pmatrix}=0 \mod{p}.\]
Then $\gamma_1 w_1+\gamma_2 w_2+\gamma_3 w_3$ belongs to $(p)+(x,y)^2$ as required.

For (iii), one inclusion is trivial so we need to show that given $u$ in $\mathbb{Z}[x,y]/(p,x,y)^n$ such that $u(p,x,y)$ is contained in $\mathfrak{b}(p,x,y)$, then $u$ belongs to $\mathfrak{b}$. First, we will show that $u$ must belong to $(p)+(x,y)^2$. Since $\mathfrak{b}$ belongs to a minimal generating pair, we have $b\subset (p)+(x,y)^2$ by Definition~\ref{MinimalGenerating}~(1). Then $\mathfrak{b}(p,x,y)$ is contained in
\begin{equation}\label{InteriorInclusion}
\left((p)+(x,y)^2\right)(p,x,y)=(p^2,px,py)+(x,y)^3.
\end{equation}
Clearly $u$ belongs to $(p,x,y)$ and we write
\[u=\alpha_1 p+\alpha_2 x+\alpha_3 y+r\]
for $\alpha_1,\alpha_2,\alpha_3\in\lbrace 0,\ldots,p-1\rbrace$ and $r$ in $(p,x,y)^2$. Now $xu$ belongs to $\mathfrak{b}(p,x,y)$, and by the equality (\ref{InteriorInclusion}) this implies that $\alpha_2 x^2+\alpha_3xy$ belongs to $(p^2,px,py)+(x,y)^3$. Thus $\alpha_2$ and $\alpha_3$ are zero, i.e. $u$ belongs to $(p)+(x,y)^2$ as required. Now, $\nu(pu)=\nu(xu)=\nu(yu)=0$ since $\nu$ is zero on $\mathfrak{b}(p,x,y)$. Thus, condition~(3) of Definition~\ref{MinimalGenerating} implies that $u$ belongs to $\mathfrak{b}$ as required.
\end{proof}

\begin{corollary}\label{MoreThanOneStep}
Let $R$ be a minimal noncommutative homomorphic image of $A$ for $n\geq 4$. Let $R/([x,y])=C=\mathbb{Z}[x,y]/\mathfrak{a}$ as usual. Then the ideal $(p,x,y)^{n-1}$ is strictly contained in $\mathfrak{a}$.
\end{corollary}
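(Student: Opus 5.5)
The plan is to argue by contradiction, using Theorem~\ref{MinimalRingGivesMGPair} together with the socle bound of Proposition~\ref{IdealsInMinimalPairs}(i).

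First I check that Theorem~\ref{MinimalRingGivesMGPair} applies to $R$. The excluded ring $\mathbb{Z}[x,y]/\left((p)+(x,y)^2\right)$ is commutative, whereas $R$ is minimal noncommutative. So the associated pair $(\mathfrak{a},\mu)$ is a minimal generating pair. In particular $(p,x,y)^{n-1}\subset\mathfrak{a}$ by condition~(1) of Definition~\ref{MinimalGenerating}, and it only remains to exclude equality.

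Suppose then that $\mathfrak{a}=(p,x,y)^{n-1}$, so that $B=C=\mathbb{Z}[x,y]/(p,x,y)^{n-1}$. The ideal $(p,x,y)^{n-2}$ of $B$ is annihilated by $(p,x,y)$, so it lies in $\soc(B)$. Because $n\geq4$ gives $n-2\geq2$, it is also contained in $(p)+(x,y)^2$: every monomial $p^sx^ty^u$ with $s+t+u\geq2$ either involves $p$ or lies in $(x,y)^2$. Hence
\[(p,x,y)^{n-2}\subset\soc(B)\cap\left((p)+(x,y)^2\right).\]

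Next I count dimensions. Lemma~\ref{ATwistedPolynomial} and Lemma~\ref{AModuleIsomorphism} show that the monomials $p^sx^ty^{k-s-t}$ with $k\leq n-2$ form a basis, in the sense of unique digit expansions, of $\mathbb{Z}[x,y]/(p,x,y)^{n-1}$. Therefore the monomials $p^sx^ty^u$ with $s+t+u=n-2$ are $F_p$-linearly independent elements of $(p,x,y)^{n-2}$. There are $\binom{n}{2}$ of them, and $\binom{n}{2}\geq6>3$ for $n\geq4$. So the dimension $k$ of Proposition~\ref{IdealsInMinimalPairs} is at least $6$, contradicting part~(i), which gives $k\leq3$. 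This contradiction shows $(p,x,y)^{n-1}\subsetneq\mathfrak{a}$.

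The only delicate point is the linear independence of the degree-$(n-2)$ monomials in $B$, and in particular that each one is nonzero. For instance $p^{n-2}$ is nonzero because $(p,x,y)^{n-1}$ only contains multiples of $p^{n-1}$ in the constant part. This follows directly from the unique normal form of Lemma~\ref{ATwistedPolynomial} applied with $n-1$ in place of $n$.
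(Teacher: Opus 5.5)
Your proof is correct and follows the paper's argument. Both assume $\mathfrak{a}=(p,x,y)^{n-1}$ and contradict the bound $k\leq 3$ of Proposition~\ref{IdealsInMinimalPairs}(i) using the socle $(p,x,y)^{n-2}$ of $C$; you also make explicit what the paper leaves implicit, namely that $(p,x,y)^{n-2}\subset(p)+(x,y)^2$ for $n\geq 4$ and that it contains $\binom{n}{2}\geq 6$ independent monomials.
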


\begin{proof}
Since $R$ is minimal noncommutative, the ideal $\mathfrak{a}$ belongs to a minimal generating pair and hence it contains $(p,x,y)^{n-1}$. However if $\mathfrak{a}=(p,x,y)^{n-1}$ then $\soc(C)=(p,x,y)^{n-2}$ which has dimension more than $3$ for $n\geq 4$. This is impossible by Proposition~\ref{IdealsInMinimalPairs}~(i).
\end{proof}

We saw in Example~\ref{Generaln=3} for $n=3$ that the procedure could end after the first step of identifying elements of $(p,x,y)^2\lhd A$ with scalar multiples of $[x,y]$. In this case, $\mathfrak{a}=(p,x,y)^2$. Corollary~\ref{MoreThanOneStep} says that this can only happen for $n=3$, and that for $n\geq 4$ the iterative procedure will never end after a single step.

In general, ideals do not satisfy the equality of Proposition~\ref{IdealsInMinimalPairs}~(iii). We give an example of an ideal for which the socle is small enough but the equality fails, and which therefore cannot belong to any minimal generating pair.

\begin{numberedexample}
For a prime $p$ and $n=6$, consider the ideal $\mathfrak{b}=(p,x^3y,xy^3)+(x,y)^5$ of $\mathbb{Z}[x,y]/(p,x,y)^6$ and let $B$ be the ring $\mathbb{Z}[x,y]/\mathfrak{b}$. The ideal $\mathfrak{b}$ satisfies the inclusion
\[(p,x,y)^{n-1}\subset\mathfrak{b}\subset(p)+(x,y)^2\]
and $\soc(B)$ has dimension three, and is generated by $x^4,y^4$ and $x^2y^2$. However, all three of $p(x^2y^2), x(x^2y^2)$ and $y(x^2y^2)$ belong to $\mathfrak{b}(p,x,y)$. Thus $x^2y^2$ is an element of the quotient ideal $\left(\mathfrak{b}(p,x,y):(p,x,y)\right)$ which does not belong to $\mathfrak{b}$.

Therefore, by Proposition~\ref{IdealsInMinimalPairs}, for any homomorphism $\nu:\mathfrak{b}\to F_p$ the pair $\left(\mathfrak{b},\nu\right)$ cannot be minimal generating. From the noncommutative perspective, if $B$ is the abelianisation of a ring $R$ then the ideal generated by $x^2y^2$ in $R$ is a minimal ideal which does not coincide with $([x,y])$, showing that $R$ cannot be minimal noncommutative.
\end{numberedexample}

\begin{problem}
For fixed prime $p$ and integer $n\geq 3$, describe all ideals $\mathfrak{b}$ in $\mathbb{Z}[x,y]/(p,x,y)^n$ which belong to a minimal generating pair.
\end{problem}

\section{Infinite minimal noncommutative rings}\label{InfiniteCase}

As we mentioned in the introduction, Bell and Danchev showed that, for the purpose of commutativity theorems, one only needs to understand finite minimal noncommutative rings. It is however natural to ask about the infinite case.

\begin{problem}\label{ProblemInfiniteMinimalNoncommutative}
Does there exist an infinite minimal noncommutative ring?
\end{problem}

As noted in \cite[Remark~2.6]{BellandDanchev}, this question ``appears to be difficult". It is pointed out for instance that, even in the case of simple rings (where the minimality condition on homomorphic images is trivially satisfied), the problem is not straightforward.

\begin{remark}
In the context of non-unital rings, noncommutative rings whose proper subrings are all commutative were studied in \cite{Ikeda_OneStepNoncommutative}, where they are called \textit{one-step noncommutative}. It is shown that a one-step noncommutative non-unital ring $R$ must be finite if $R/J$ is finite over its centre, where $J$ is the Jacobson radical of $R$. In fact, it is not known whether infinite one-step noncommutative non-unital rings exist. The result of \cite{Ikeda_OneStepNoncommutative} implies that this question reduces to the problem of the existence of a one-step noncommutative division ring. Interestingly, we will see that Problem~\ref{ProblemInfiniteMinimalNoncommutative} also reduces to the division ring case, and both problems turn out to be equivalent. Of course the condition on all subrings is very strong when one does not require rings to have an identity element.
\end{remark}

In the context of rings with $1$ that we are interested in, one can easily construct infinite one-step noncommutative rings. For instance, for a prime $p$, the ring
\[R\coloneqq\mathbb{Z}\langle x,y\rangle/\left(x^2,xy,y^2,px,py\right)\]
is not commutative, but all of its proper subrings are (the proof is similar to that of Proposition~\ref{SubringsInA_pCommutative}). Its factor ring $R/pR$ is a finite minimal noncommutative ring in the third class, described in Example~\ref{Generaln=3}. However, we will show in Corollary~\ref{OneStepNoncoHasMinimalImage} that every noncommutative ring whose proper subrings are all commutative has a minimal noncommutative homomorphic image. Thus, in our context of unital rings, one should have a condition on both subrings and homomorphic images, and the right formulation is Problem~\ref{ProblemInfiniteMinimalNoncommutative}. We start with an observation that generalises what we saw in the finite case.

\begin{proposition}\label{A22Jul22}
Let $R$ be a noncommutative ring whose proper subrings are all commutative. Then $R$ is a homomorphic image of the ring $\mathbb{Z}\langle x,y\rangle$, and the image in $R$ of the ideal $([x,y])$ of $\mathbb{Z}\langle x,y\rangle$ is the nonzero commutator ideal $[R,R]$. Moreover, $R$ is minimal noncommutative if and only if every nonzero ideal of $R$ contains $[R,R]$.
\end{proposition}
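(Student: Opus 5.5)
Since $R$ is noncommutative, we can pick $a,b\in R$ with $ab\neq ba$. The subring generated by $a$ and $b$ (it contains $1$) is noncommutative. By hypothesis it cannot be a proper subring, so it equals $R$. Hence the ring homomorphism $\varphi:\mathbb{Z}\langle x,y\rangle\to R$ sending $x\mapsto a$, $y\mapsto b$ is surjective, and $R$ is a homomorphic image of $\mathbb{Z}\langle x,y\rangle$.

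Next we identify the image of $([x,y])$. Since $\varphi$ is surjective, the image $J:=\varphi(([x,y]))$ is the two-sided ideal of $R$ generated by $[a,b]$. Clearly $J\subset[R,R]$, where $[R,R]$ denotes the commutator ideal, i.e. the ideal generated by all commutators. For the reverse inclusion, note that $R/J$ is generated as a ring by the images of $a$ and $b$, which commute. So $R/J$ is commutative, and therefore $J$ contains every commutator. Thus $J=[R,R]$, and this ideal is nonzero because $[a,b]\neq 0$.

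Finally, for the characterization, observe that an ideal $I$ of $R$ satisfies $R/I$ commutative if and only if $[R,R]\subset I$. Suppose first that $R$ is minimal noncommutative. Every proper homomorphic image $R/I$ with $I\neq 0$ is then commutative, so every nonzero ideal contains $[R,R]$. Conversely, suppose every nonzero ideal contains $[R,R]$. Then every proper homomorphic image is commutative. Proper subrings are commutative by hypothesis, and $R$ is noncommutative, so $R$ is minimal noncommutative.

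None of these steps presents a real obstacle. The only point needing care is that subrings contain $1$, so the subring generated by $a$ and $b$ is exactly $\varphi(\mathbb{Z}\langle x,y\rangle)$. A second point of care is that the image of an ideal under a surjection is an ideal, which is what lets us describe $\varphi(([x,y]))$ as the ideal generated by $[a,b]$.
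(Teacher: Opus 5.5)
Your proof is correct and follows essentially the same route as the paper. Surjectivity comes from the subring generated by two noncommuting elements, and $R/([a,b])$ is commutative because it is generated by two commuting images; the paper phrases this as $R/([a,b])$ being a homomorphic image of $\mathbb{Z}[x,y]$. The final equivalence then follows directly from the definition.
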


\begin{proof}
Take any two noncommuting elements $u,v$ in $R$ and consider the ring homomorphism $\mathbb{Z}\langle x,y\rangle\to R$, which maps $x$ to $u$ and $y$ to $v$. Its image is a noncommutative subring of $R$ and hence is equal to $R$ since proper subrings are commutative. The factor ring $R/([u,v])$ is a homomorphic image of $\mathbb{Z}[x,y]$, hence $R/([u,v])$ is commutative, i.e. $([u,v])=[R,R]$. The last statement follows from the definition of minimal noncommutative rings.
\end{proof}

\begin{corollary}\label{OneStepNoncoHasMinimalImage}
Let $R$ be a noncommutative ring whose proper subrings are all commutative. Then there exists an ideal $E$ of $R$ such that $R/E$ is minimal noncommutative.
\end{corollary}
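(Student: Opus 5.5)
Let $C=[R,R]$. By Proposition~\ref{A22Jul22}, $R$ is a homomorphic image of $\mathbb{Z}\langle x,y\rangle$ with $x\mapsto u$, $y\mapsto v$ for two noncommuting elements $u,v\in R$, and $C=([u,v])$ is the principal two-sided ideal generated by the single element $c=[u,v]\neq 0$. The plan is a Zorn's lemma argument on ideals not containing $c$. Let $\mathcal{F}$ be the set of ideals $E$ of $R$ with $c\notin E$. It is nonempty, since $0\in\mathcal{F}$ because $c\neq 0$. Order $\mathcal{F}$ by inclusion. A chain $(E_\lambda)$ in $\mathcal{F}$ has union an ideal, and since $c$ lies in none of the $E_\lambda$, it does not lie in the union. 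Hence Zorn's lemma gives a maximal element $E\in\mathcal{F}$.

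Next I check that $R/E$ is minimal noncommutative. It is noncommutative because $[\bar u,\bar v]=\bar c\neq 0$, as $c\notin E$.

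For proper subrings, let $T\subsetneq R/E$ be a subring. Its preimage $\tilde T$ in $R$ is a subring containing $E$, and $\tilde T\neq R$. By hypothesis $\tilde T$ is commutative, hence so is its image $T$. Note that $\tilde T$ contains $1$, so it is a genuine subring.

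For proper homomorphic images, by Proposition~\ref{A22Jul22} applied to $R/E$ (which also has all proper subrings commutative, as just shown), it suffices that every nonzero ideal of $R/E$ contains $[R/E,R/E]=(\bar c)$. A nonzero ideal of $R/E$ has the form $J/E$ with $E\subsetneq J$. By maximality of $E$, $J\notin\mathcal{F}$, so $c\in J$. Thus $J/E\ni\bar c$ and so contains $(\bar c)=[R/E,R/E]$. Therefore every proper homomorphic image of $R/E$ is commutative.

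The main point requiring care is that the commutator ideal is finitely (indeed singly) generated, so that the chain condition in Zorn's lemma works. This is exactly what Proposition~\ref{A22Jul22} supplies, via $[R,R]=([u,v])$. Everything else is routine.
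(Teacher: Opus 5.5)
Your proof is correct and is essentially the paper's argument. Your family of ideals not containing $c=[u,v]$ coincides with the paper's family $\mathcal{P}$ of ideals $V$ with $R/V$ noncommutative, because $[R,R]=(c)$ by Proposition~\ref{A22Jul22}; in both proofs it is the single generation of $[R,R]$ that makes Zorn's lemma apply, and you also spell out why proper subrings and proper quotients of $R/E$ are commutative, which the paper leaves implicit.
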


\begin{proof}
By Proposition~\ref{A22Jul22}, we know that $R=\mathbb{Z}\langle x,y\rangle/I$ for some ideal $I$ which does not contain $[x,y]$. Let $\mathcal{P}$ be the set of those ideals $V$ of $R$ such that $R/V$ is not commutative. The zero ideal belongs to $\mathcal{P}$. If $\lbrace V_\alpha\rbrace$ is a chain in $\mathcal{P}$ then none of the $V_\alpha$ contains $[x,y]$ as $([x,y])=[R,R]$ by Proposition~\ref{A22Jul22}. So the union $U\coloneqq\bigcup_\alpha V_\alpha$ is an ideal of $R$ which does not contain $[x,y]$. In particular, $R/U$ is not commutative and hence $U$ is an upper bound for the chain $\lbrace V_\alpha\rbrace$ in $\mathcal{P}$. By Zorn's lemma, $\mathcal{P}$ must contain a maximal element $E$ and thus $R/E$ is minimal noncommutative.
\end{proof}

We now turn to Problem~\ref{ProblemInfiniteMinimalNoncommutative} on infinite minimal noncommutative rings. From now on, given a minimal noncommutative ring $R$, we will assume the presentation $R=\mathbb{Z}\langle x,y\rangle/I$ of Proposition~\ref{A22Jul22}. Given a ring $R$, we denote by $R^e$ its enveloping algebra $R\otimes_\mathbb{Z} R^{\text{op}}$, where $R^{\text{op}}$ is the opposite ring of $R$. We will use the fact that two-sided ideals in $R$ correspond to $R^e$-submodules of $R$.

We will now see that a minimal noncommutative PI ring is necessarily finite. This statement does not appear in \cite{BellandDanchev}, but it is implicitly showed in the proof of \cite[Theorem~2.4]{BellandDanchev}. We essentially follow their proof, with minor changes to fit the different statement.

\begin{theorem}\label{PI case}
A minimal noncommutative PI ring is finite.
\end{theorem}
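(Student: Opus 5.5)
Let $R=\mathbb{Z}\langle x,y\rangle/I$ be a minimal noncommutative PI ring, presented as in Proposition~\ref{A22Jul22}, so $R$ is generated by $x$ and $y$ and $[R,R]=([x,y])\neq 0$. The strategy is to find a nonzero ideal $E$ of $R$ with $R/E$ finite, and then use minimality to get a finite ideal containing $[R,R]$, from which finiteness of $R$ follows.

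\textbf{Step 1: a finite commutative image.} Every proper homomorphic image of $R$ is commutative and finitely generated. I would first show that $R$ has a nonzero ideal $E$ with $R/E$ finite.
\begin{itemize}[topsep=0pt]
\item If $R$ is not prime, there are nonzero ideals $E_1,E_2$ with $E_1E_2=0$. Each $R/E_i$ is commutative and finitely generated.
\item If $R$ is prime, Posner's theorem makes $R$ a finitely generated prime PI ring, so its centre $Z$ is nonzero. I would then use the standard fact (Kaplansky/Procesi) that an affine PI ring that is prime and noncommutative has a nonzero ideal $E$ which is a maximal ideal of finite index; equivalently, a finitely generated PI ring has enough finite simple images to be residually finite (Jacobson radical nilpotent, Amitsur/Braun). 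For instance, if $R$ is not simple, pick a nonzero proper ideal $E$; then $R/E$ is a finitely generated commutative ring, and some further maximal ideal $\mathfrak m\supset E$ has $R/\mathfrak m$ a finite field.
\item If $R$ is simple, Kaplansky's theorem makes $R$ a central simple algebra finite-dimensional over its centre $Z$, and $Z$ is a field. Since $R$ is finitely generated as a ring and finite over $Z$, the Artin--Tate lemma makes $Z$ a finitely generated ring that is a field, hence finite. So $R$ is finite and we are done.
\end{itemize}

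\textbf{Step 2: make the commutator ideal finite.} With $E\neq 0$ of finite index, $R/E^k$ is also finite for each $k$. This holds because a finitely generated ring that is a finite extension of a finite ring by a module over it has each $E^i/E^{i+1}$ a finitely generated module over the finite ring $R/E$ (or over its enveloping algebra). That finite generation comes from finitely generated PI rings being (left and right) Noetherian-like enough. This is where care is needed, so I would instead use $E^2$ directly: Bell–Danchev show that $E$ is finitely generated as an ideal in the affine PI setting. Now, if some power $E^k=0$, then $R$ is finite. Otherwise $E^k\neq 0$ for all $k$, and by minimality each $E^k$ contains $[R,R]$. Hence $[R,R]\subset\bigcap_k E^k$. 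By a Krull intersection theorem for affine PI rings (Artin–Rees type results due to Braun/Schelter for Noetherian PI rings), I would then derive a contradiction, or use $[R,R]\subset \bigcap_k\mathfrak m_i$ over all finite-index ideals, contradicting residual finiteness of $R$ as above. Either way, some finite-index ideal $E$ fails to contain $[x,y]$. Then $E=0$, since every nonzero ideal contains $[R,R]$, so $R\cong R/E$ is finite.

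\textbf{Main obstacle.} The hard step is residual finiteness of the finitely generated PI ring $R$: for $0\neq[x,y]$ we need a finite-index ideal avoiding it. I would try to extract this from \cite[Theorem~2.4]{BellandDanchev}, whose proof produces a finite noncommutative image of any noncommutative affine PI ring. The route there is to reduce modulo a maximal ideal of the centre via Artin–Procesi or Posner, getting a matrix image over a finite field that is still noncommutative. Once such an image $R/E$ that is finite and noncommutative exists, the final deduction is immediate: minimality forces $E=0$.
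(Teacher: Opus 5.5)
Your Step~1 is fine: the simple case, via Kaplansky and Artin--Tate, is exactly the argument the paper applies to $R^e/Q$. Also, $R/E^k$ is indeed finite for every $k$. The reason is not that affine PI rings are ``Noetherian-like'' (they need not be Noetherian), but that a finite-index ideal of a finitely generated ring is finitely generated as a one-sided ideal.

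The genuine gap is the last step, that some finite-index ideal avoids $[x,y]$. Every nonzero ideal of $R$ contains $[x,y]$, so the only candidate is $0$. This claim \emph{is} the statement that $R$ is finite, so the proposal relocates the crux rather than proving it. None of the tools you name supplies it:
\begin{itemize}
\item \emph{Residual finiteness fails in general.} In the subring $\begin{pmatrix}F_p&F_p[t]&F_p[t]\\0&F_p[t]&F_p[t]\\0&0&F_p\end{pmatrix}$ of $M_3(F_p[t])$, factor out the ideal $We_{13}$. Here $W=\ker\phi$ for a linear functional $\phi$ on $F_p[t]$ such that $(\phi(t^n))_{n\geq 0}$ is not eventually periodic. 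The result is PI, being a quotient of a subring of upper triangular matrices over a commutative ring. Any finite image of it kills some $f(t)e_{22}$ with $f\neq 0$, hence $fF_p[t]e_{13}$. It therefore kills the whole nonzero corner $(F_p[t]/W)e_{13}$, because $fF_p[t]+W=F_p[t]$ (otherwise $(\phi(t^n))_n$ would satisfy the linear recurrence given by $f$).
\item \emph{Krull-intersection and Artin--Rees theorems} need Noetherian hypotheses that you do not have at that point. The paper only proves $R$ Noetherian after showing that $[R,R]$ is finite.
\item \emph{The route you attribute to Bell and Danchev cannot work.} You aim for a noncommutative matrix image over a finite field, via Posner or by reducing modulo a maximal ideal of the centre. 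But in every ring of Theorem~\ref{RefinedClassification} the commutator ideal lies in the Jacobson radical: $U_p/\J(U_p)\cong F_p\times F_p$, $B_{p,n,i}/\J(B_{p,n,i})\cong F_{p^n}$, and the third class consists of local rings with residue field $F_p$. So actual minimal noncommutative rings have no noncommutative simple images at all.
\item \emph{The citation is circular.} What you would need to extract from their Theorem~2.4 is precisely the present statement, which the paper notes is only implicit in their proof.
\end{itemize}

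The missing idea is to apply primitive-ring theory to the enveloping ring rather than to $R$. Since $L=([x,y])$ is the unique minimal ideal, it is a simple $R^e$-module, so $Q=\Ann_{R^e}(L)$ is primitive. Regev's theorem makes $R^e$ PI, and Kaplansky's theorem makes $R^e/Q$ finite-dimensional over its centre. Artin--Tate then makes that centre a field finitely generated as a ring, hence finite. So $R^e/Q$, and with it $L$, is finite.

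Along your own case split one can avoid Regev:
\begin{itemize}
\item if $R$ is not prime, then $L^2=0$ and $L$ is a simple module over the affine commutative ring $(R/L)\otimes_{\mathbb{Z}}(R/L)$, hence finite;
\item if $R$ is prime, Rowen's theorem gives $0\neq c\in L\cap\Z(R)$, and $Rc=L=L^2=Rc^2$ makes $c$ a unit, so $R$ is simple and your Step~1 applies.
\end{itemize}

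The paper then bootstraps from $L$ to $R$:
\begin{itemize}
\item $R/L$ is commutative and affine, so $R$ is Noetherian;
\item $r\mapsto([x,r],[y,r])$ takes values in the finite set $L\times L$, so $R$ is a finite $\Z(R)$-module;
\item for central $z$ one has $L(z^l-z^k)=0$ for some $k<l$, and the stabilising chain $\{s: s(z^l-z^k)^m\in L\}$ together with minimality forces $z^l-z^k$ to be nilpotent;
\item hence $\Z(R)$ is integral over some $\mathbb{Z}/q\mathbb{Z}$, and a second application of Artin--Tate gives $|R|<\infty$.
\end{itemize}
Neither the finiteness of $L$ nor this bootstrap has a counterpart in your proposal.
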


\begin{proof}
Let $R$ be a minimal noncommutative PI ring. By Proposition~\ref{A22Jul22}, we know that $R=\mathbb{Z}\langle x,y\rangle/I$ for some ideal $I$, and that the ideal $([x,y])$ in $R$ is nonzero and contained in every nonzero ideal. Denote this unique minimal ideal by $L$. Then $L$ is a simple module over the enveloping algebra $R^e$. Its annihilator $Q\lhd R^e$ is therefore a primitive ideal. By Regev's tensor product theorem (see e.g. \cite[Theorem~6.1.1]{RowenBook}), the tensor product ring $R^e$ is PI too. Thus $R^e/Q$ is a primitive PI ring, and by Kaplansky's theorem (see e.g. \cite[Theorem~1.5.16]{RowenBook}) it is of the form $M_n(D)$, where $D$ is a division ring finite dimensional over its centre. Letting $b\geq 0$ be the characteristic of $R^e/Q$, we have the ring inclusions
\[\mathbb{Z}/b\mathbb{Z}\hookrightarrow \Z(R^e/Q)\hookrightarrow R^e/Q.\]
The ring $R^e/Q$ is finite dimensional over its centre $\Z(R^e/Q)$, and finitely generated as an algebra over $\mathbb{Z}/b\mathbb{Z}$. It follows by the Artin-Tate Lemma (see e.g. \cite[Lemma~13.9.10]{McConnell-Robson}) that the centre $\Z\left(R^e/Q\right)$ is a finitely generated $\mathbb{Z}$-algebra. 

However $\Z\left(R^e/Q\right)$ is a field. It is well known (e.g. from the Nullstellensatz) that a field finitely generated as an algebra over $\mathbb{Z}$ must be a finite field. Thus $\Z\left(R^e/Q\right)$ is finite, and so is the finite dimensional vector space $R^e/Q$. Since $Q$ is the annihilator of $L$ in $R^e$, this implies that $L$ is finite too. Now, the ring $R/L$ is a homomorphic image of the commutative ring $\mathbb{Z}[x,y]$, hence it is Noetherian. Since the ideal $L$ is finite, then $R$ is Noetherian too.

As $R$ is of the form $\mathbb{Z}\langle x,y\rangle/I$, it is countable. Take an enumeration $r_1, r_2,\ldots$ of all the elements of $R$. We will show that $R$ is a finite module over its centre. For each $r$ in $R$, both commutators $[x,r]$ and $[y,r]$ belong to the ideal $L$ since $R/L$ is commutative. Thus each $r$ yields a map $f_r:\lbrace x,y\rbrace\to L$ given by $f_r(x)=[x,r]$ and $f_r(y)=[y,r]$. Since $L$ is finite, there are only finitely many maps from $\lbrace x,y\rbrace$ to $L$. Hence there is a natural number $N$ such that for all $m>N$ there exists $i\leq N$ such that $f_{r_m}=f_{r_i}$. This means $[x,r_m]=[x,r_i]$ i.e. $[x,r_m-r_i]=0$. Similarly, $r_m-r_i$ commutes with $y$ too. Therefore, $r_m-r_i$ is central in $R$, and it follows that the finitely many elements $r_1,\ldots,r_N$ generate $R$ as a module over its centre $\Z(R)$.

Now, take any element $z$ in $R$. The set $\lbrace [x,y]z^i:i\geq 0\rbrace$ is contained in the ideal $L$ and is therefore finite. Thus there exist natural numbers $k<l$ such that $[x,y](z^l-z^k)=0$. Now we assume further that $z$ is central, and it follows that $L(z^l-z^k)=0$. We will show that the central element $z^l-z^k$ is nilpotent in $R$. For each $m\geq 1$, denote by $I_m$ the ideal
\[\left\lbrace x\in R: x\left(z^l-z^k\right)^m\in L\right\rbrace.\]
Then $I_1\subset I_2\subset\cdots$ is an ascending chain of ideals in $R$, which we showed is Noetherian. We then have $I_j=I_{j+1}$ for some $j\geq 1$. We assume that $(z^l-z^k)^{j+1}$ is not zero and seek a contradiction. Since $R$ is minimal noncommutative, the nonzero ideal $R(z^l-z^k)^{j+1}$ contains the minimal ideal $L$. So there exists $s\in R$ such that
\[[x,y]=s(z^l-z^k)^{j+1}=s(z^l-z^k)^j(z^l-z^k).\]
Therefore $s$ belongs to $I_{j+1}=I_j$ and hence $[x,y]$ belongs to $L(z^l-z^k)$ which is zero by our choice of $k$ and $l$. This is absurd as $R$ is not commutative. This contradiction shows that $(z^l-z^k)^{j+1}=0$. Since $z$ was an arbitrary central element, it follows that $\Z(R)$ is algebraic over $\mathbb{Z}$. Taking $z=2$, we also obtain that $R$ has finite characteristic $q>0$. Thus $R$ is a finitely generated $\mathbb{Z}/q\mathbb{Z}$-algebra and we showed that it is a finite $\Z(R)$-module. So we can again use the Artin-Tate Lemma on the ring inclusions
\[\mathbb{Z}/q\mathbb{Z}\hookrightarrow\Z(R)\hookrightarrow R\]
to obtain that $\Z(R)$ is a finitely generated $\mathbb{Z}/q\mathbb{Z}$-algebra. Since we showed that $\Z(R)$ is algebraic over $\mathbb{Z}/q\mathbb{Z}$, it follows that $\Z(R)$ is a finitely generated module over the finite ring $\mathbb{Z}/q\mathbb{Z}$. In particular $\Z(R)$ is finite. As $R$ is finitely generated as a module over its centre, it must be finite too.
\end{proof}

Thus, if an infinite minimal noncommutative ring exists, it is not PI. Since every proper homomorphic image of $R$ is commutative (hence PI), the ideal $I=0$ of $R$ is maximal with respect to the ring $R/I$ not being PI. This is well-known to imply that $I$ is prime (see e.g. \cite[Proposition~3.4]{Reyes_PIPforIdeals}). In our special case, we can give a short direct proof.

\begin{lemma}\label{InfiniteMinimalNoncommutativeIsPrime}
An infinite minimal noncommutative ring $R$ is prime.
\end{lemma}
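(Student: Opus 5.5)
We argue by contradiction. Suppose $R$ is an infinite minimal noncommutative ring and $I,J$ are nonzero ideals with $IJ=0$. The idea is to show that the existence of such $I,J$ forces $R$ to satisfy a polynomial identity. Theorem~\ref{PI case} then makes $R$ finite, which contradicts the hypothesis.

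First, since $R$ is minimal noncommutative, Proposition~\ref{A22Jul22} tells us that every nonzero ideal contains the commutator ideal $L:=[R,R]$. In particular $L\subset I$ and $L\subset J$.

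Next we build the polynomial identity. The quotients $R/I$ and $R/J$ are proper homomorphic images of $R$, so both are commutative. Hence every commutator $[a,b]$ lies in $I$, and likewise in $J$. It follows that for all $a,b,c,d\in R$ the product $[a,b][c,d]$ lies in $IJ=0$. Thus $R$ satisfies the identity $[X_1,X_2][X_3,X_4]=0$. This is a nonzero polynomial identity (it is multilinear with coefficient $\pm1$), so $R$ is PI.

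Finally, Theorem~\ref{PI case} says $R$ is finite, contradicting our assumption. Therefore no such pair $I,J$ exists, and $R$ is prime.

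The only point needing care is that the identity is genuinely nontrivial, so that the PI hypothesis of Theorem~\ref{PI case} applies. Expanding $[X_1,X_2][X_3,X_4]$ gives the monomial $X_1X_2X_3X_4$ with coefficient $1$, so this is immediate. Note also that the argument never uses $I\neq J$, so the case $I=J$ is covered automatically. This also recovers the remark before the lemma: $0$ is maximal among ideals with non-PI quotient.
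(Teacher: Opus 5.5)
Your proof is correct and follows essentially the same route as the paper. The paper notes that every nonzero ideal contains $([x,y])=[R,R]$, so it suffices to show $([x,y])^2\neq 0$; otherwise $R$ would satisfy $[X_1,X_2]X_3[X_4,X_5]=0$ and hence be finite by Theorem~\ref{PI case}. You argue directly from nonzero ideals $I,J$ with $IJ=0$ and use the identity $[X_1,X_2][X_3,X_4]=0$ instead, which works equally well.
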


\begin{proof}
Since $R$ is minimal noncommutative, every nonzero ideal contains $([x,y])$. So to show that $R$ is prime, it is enough to prove that the ideal $([x,y])^2$ is not zero. But if $([x,y])^2=0$ then $[X_1,X_2]X_3[X_4,X_5]$ is a polynomial identity for $R$ and $R$ is then finite by Theorem~\ref{PI case}.
\end{proof}

\begin{lemma}\label{SemiprimitiveIsDivisionRing}
Let $R$ be an infinite minimal noncommutative ring. If $R$ is semiprimitive then it is a division ring.
\end{lemma}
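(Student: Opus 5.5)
The plan is to show first that $R$ is primitive, and then, using only the condition on proper subrings, that the faithful simple module must be one-dimensional over its endomorphism division ring.

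\emph{Step 1: $R$ is primitive.} Since $R$ is semiprimitive, the intersection of all primitive ideals of $R$ is zero. Let $P$ be a primitive ideal, say $P=\Ann_R(M)$ for a simple left $R$-module $M$, and suppose $P\neq 0$. By Proposition~\ref{A22Jul22}, every nonzero ideal contains $L\coloneqq[R,R]=([x,y])$, which is nonzero. Hence every nonzero primitive ideal contains $L$. If all primitive ideals were nonzero, their intersection would contain $L\neq 0$, contradicting semiprimitivity. So $0$ is a primitive ideal, and $R$ has a faithful simple left module $M$. (Lemma~\ref{InfiniteMinimalNoncommutativeIsPrime} is consistent with this but is not needed here.)

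\emph{Step 2: density.} Let $D=\mathrm{End}_R(M)$, which is a division ring by Schur's lemma, and regard $M$ as a $D$-vector space. By the Jacobson density theorem, for any finitely many $D$-linearly independent $v_1,\dots,v_k\in M$ and any $w_1,\dots,w_k\in M$ there is $r\in R$ with $rv_i=w_i$ for all $i$. I claim $\dim_D M=1$. Suppose instead that $v_1,v_2\in M$ are $D$-independent, and set $W=Dv_1+Dv_2$. Consider
\[S\coloneqq\lbrace r\in R: rW\subset W,\ rv_1\in Dv_1\rbrace .\]
This is a subring of $R$ containing $1$. It is proper, because density gives $r\in R$ with $rv_1=v_2$, and such an $r$ is not in $S$. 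Restriction to $W$ gives a ring homomorphism from $S$ to the ring of $D$-endomorphisms of $W$ that stabilise the line $Dv_1$. With respect to the basis $v_1,v_2$, this is a ring of triangular $2\times 2$ matrices over $D$, up to the usual opposite-ring convention. By density, this homomorphism is surjective: every such endomorphism of $W$ is the restriction of some $r\in R$, and that $r$ automatically lies in $S$. The target ring is noncommutative, since the matrix units $e_{11}$ and $e_{12}$ do not commute. Therefore $S$ is a proper noncommutative subring of $R$, contradicting minimality. Hence $\dim_D M=1$.

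\emph{Step 3: conclusion.} When $\dim_D M=1$, density with a single basis vector shows that $R\to\mathrm{End}_D(M)\cong D^{\mathrm{op}}$ is surjective, and it is injective because $M$ is faithful. Thus $R\cong D^{\mathrm{op}}$ is a division ring.

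The main obstacle is Step 2: choosing a subring that contains $1$, is visibly proper, and still has a noncommutative image. Taking the stabiliser of both the flag $Dv_1\subset W$ and the subspace $W$ itself is what lets density produce the full triangular ring.
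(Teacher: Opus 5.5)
Your proof is correct, and after the first step it takes a genuinely different route from the paper's.

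Your Step 1 is essentially the paper's argument. The paper picks a maximal right ideal not containing $([x,y])$ and notes that it contains no nonzero two-sided ideal. That is the same observation as yours: the primitive ideals cannot all contain $[R,R]$.

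From there the paper cites the structural form of the density theorem. Either $R\cong M_n(D)$, or for every $t$ some subring of $R$ maps onto $M_t(D)$. The paper rules out the second alternative by playing the subring condition against the quotient condition. In the case $R\cong M_n(D)$, it needs Theorem~\ref{PI case} together with Amitsur--Levitzki to exclude $D$ commutative. Only then does it use the scalar copy of $D$ to force $n=1$.

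You instead apply the module-level density theorem directly. Whenever $\dim_D M\geq 2$, you exhibit an explicit proper subring, the stabiliser of the flag $Dv_1\subset Dv_1+Dv_2$, that surjects onto a noncommutative triangular ring. I checked the three points this needs:
\begin{itemize}
\item $S$ is closed under multiplication, because $R$ acts $D$-linearly, so $r(dv_1)=d(rv_1)$.
\item $S$ is proper, since an $r$ with $rv_1=v_2$ lies outside it.
\item Restriction maps $S$ onto the flag-stabilising endomorphisms of $W$.
\end{itemize}
Your Step 3 identification $R\cong D^{\mathrm{op}}$ is also fine.

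Your route treats all cases $\dim_D M\geq 2$, finite or infinite, uniformly, and uses only the subring condition there. So it never needs Theorem~\ref{PI case}, which rests on Regev, Kaplansky and Artin--Tate, nor the hypothesis that $R$ is infinite. It therefore shows that every semiprimitive minimal noncommutative ring is a division ring, and hence infinite by Wedderburn's little theorem.

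The paper's route is shorter to write down given the cited dichotomy. However, it leans on considerably heavier machinery than the lemma actually requires.
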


\begin{proof}
Assume that $R$ is semiprimitive, i.e. its Jacobson radical is zero. Then there exists a maximal right ideal $B$ that does not contain $([x,y])$. Since every nonzero ideal contains $[x,y]$ as $R$ is minimal noncommutative, it follows that $B$ contains no nonzero ideal. Therefore $R$ is right primitive and similarly it is left primitive. Then, by Jacobson's density theorem (see e.g. \cite[Corollary~0.3.7]{McConnell-Robson}), there exists a division ring $D$ such that one of the following holds:
\begin{enumerate}[topsep=0pt]
\item[(a)] either $R$ is isomorphic to $M_n(D)$ for some natural number $n$, or
\item[(b)] for every natural number $t$ there is a subring of $R$ having $M_t(D)$ as a homomorphic image.
\end{enumerate}
We will show that (a) holds with $n=1$. Suppose first that (b) holds. We seek a contradiction. By minimality of $R$, each of its proper subrings is commutative. Thus the only subring of $R$ that can have the noncommutative ring $M_2(D)$ as a homomorphic image is $R$ itself. But every proper homomorphic image of $R$ is commutative and it follows that $M_2(D)$ is isomorphic to $R$. Then $M_3(D)$ cannot be a homomorphic image of a subring of $R$, contradicting that (b) holds.

So (a) holds and $R$ is isomorphic to $M_n(D)$ for some natural number $n$ and division ring $D$. If $D$ is a commutative field, then $M_n(D)$ is PI by the Amitsur-Levitzki theorem \cite[Theorem~1]{Standidformatrices}. But $R$ cannot be PI by Theorem~\ref{PI case}. Therefore $D$ is a noncommutative division ring. Note that $R\cong M_n(D)$ contains a subring isomorphic to $D$. Since every proper subring of $R$ is commutative, it follows that $n=1$ and $R$ is isomorphic to $D$.
\end{proof}

\begin{lemma}\label{MinimalIdealFinGenThenDivisionRing}
Let $R$ be an infinite minimal noncommutative ring. If the ideal $([x,y])$ is finitely generated as a right module, then $R$ is a division ring.
\end{lemma}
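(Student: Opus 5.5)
The plan is to reduce to Lemma~\ref{SemiprimitiveIsDivisionRing}: it suffices to show that the Jacobson radical $J=\J(R)$ is zero. Write $L=([x,y])$ for the unique minimal ideal of $R$ (Proposition~\ref{A22Jul22}). If $J\neq 0$, then since $J$ is a two-sided ideal and $R$ is minimal noncommutative, $L\subset J$. We aim to derive a contradiction from this inclusion using Nakayama's lemma.

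The key step is as follows. $L$ is a finitely generated right $R$-module by hypothesis, and $LJ$ is a two-sided ideal of $R$ contained in $L$. By Lemma~\ref{InfiniteMinimalNoncommutativeIsPrime}, $R$ is prime. Since $L\neq 0$ and $J\neq 0$, primeness gives $LJ\neq 0$. Now $LJ$ is a nonzero two-sided ideal, so minimality yields $L\subset LJ$, that is, $L=LJ$. Nakayama's lemma for the finitely generated right module $L_R$ then forces $L=0$. This contradicts the noncommutativity of $R$, so $J=0$. Hence $R$ is semiprimitive, and Lemma~\ref{SemiprimitiveIsDivisionRing} shows that $R$ is a division ring.

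I expect the only delicate point to be confirming the hypotheses of each cited result. First, $LJ$ must be a two-sided ideal: this holds because $L$ and $J$ are both two-sided. Second, Nakayama's lemma requires $L$ to be finitely generated on the same side on which $J$ acts, and indeed $LJ$ uses the right action, matching the hypothesis. Third, primeness is used in the form that $IJ\neq 0$ for nonzero ideals $I,J$; this is exactly the statement of Lemma~\ref{InfiniteMinimalNoncommutativeIsPrime}, which in turn depends on Theorem~\ref{PI case}. The argument is otherwise routine.
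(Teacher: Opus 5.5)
Your proof is correct and uses the same ingredients as the paper's: Nakayama's lemma on $L_R$, minimality of $L$, primeness of $R$, and then Lemma~\ref{SemiprimitiveIsDivisionRing}. The paper runs them directly ($L\J(R)\subsetneq L$ gives $L\J(R)=0$, hence $\J(R)=0$ by primeness), whereas you argue by contradiction; your preliminary observation $L\subset J$ is true but never used.
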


\begin{proof}
If $([x,y])$ is a finitely generated right $R$-module then $([x,y])\J(R)$ is properly contained in $([x,y])$ by Nakayama's Lemma. But every nonzero ideal contains $([x,y])$ and hence the only ideal properly contained in $([x,y])$ is $0$. Thus $([x,y])\J(R)=0$. By Lemma~\ref{InfiniteMinimalNoncommutativeIsPrime} we know that $R$ is prime, and since $([x,y])\neq 0$, it follows that $R$ has zero Jacobson radical. Therefore $R$ is a division ring by Lemma~\ref{SemiprimitiveIsDivisionRing}.
\end{proof}

\begin{theorem}\label{C22July22}
Every infinite minimal noncommutative ring is a division ring.
\end{theorem}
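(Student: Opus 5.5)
The plan is to reduce to Lemma~\ref{SemiprimitiveIsDivisionRing}, i.e.\ to show that $\J(R)=0$, and to use Lemma~\ref{MinimalIdealFinGenThenDivisionRing} as a fallback. Write $R=\mathbb{Z}\langle x,y\rangle/I$ as in Proposition~\ref{A22Jul22}, and let $L=([x,y])$ be the unique minimal ideal of $R$. By Lemma~\ref{InfiniteMinimalNoncommutativeIsPrime}, $R$ is prime. Hence $L^2$ is a nonzero ideal contained in $L$, so $L^2=L$. Suppose, for a contradiction, that $\J(R)\neq 0$. Since $\J(R)$ is a nonzero ideal, it contains $L$.

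The first step is to exploit the commutative quotient $\overline{R}:=R/L$. It is a finitely generated commutative $\mathbb{Z}$-algebra, hence Noetherian and a Hilbert (Jacobson) ring. Every maximal right ideal $B$ of $R$ contains $\J(R)\supset L$, so $R/B$ is a simple module over the commutative ring $\overline{R}$. Therefore $R/B$ is a residue field of $\overline{R}$ at a maximal ideal, and by the Nullstellensatz argument used in Theorem~\ref{PI case} it is a finite field. Thus the right primitive ideals of $R$ are exactly the preimages of the maximal ideals of $\overline{R}$. It follows that $\J(R)/L=\J(\overline{R})$, which is the nilradical of $\overline{R}$ because $\overline{R}$ is a Hilbert ring. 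Since $\overline{R}$ is Noetherian, this nilradical is nilpotent, so $\J(R)^m\subset L$ for some $m$.

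The second step is to turn this into a contradiction. Combining $L\subset \J(R)$, $L=L^2$ and $\J(R)^m\subset L$, I want to show that $L$ is finitely generated as a right ideal, and then invoke Lemma~\ref{MinimalIdealFinGenThenDivisionRing}. (That lemma would give that $R$ is a division ring, contradicting $\J(R)\neq 0$; alternatively it already finishes the proof directly.) My first attempt at finite generation: $L=RcR$ with $c=[x,y]$. Working modulo $L^2=L$ is useless, so instead I would use that $x$ and $y$ commute with every element of $L$ up to commutators lying in $L$. I would then try to show that $L=\sum_{w}w\,cR$, with $w$ ranging over the finitely many monomials needed to generate the Noetherian $\overline{R}$-module structure, using Noetherianity of $\overline{R}$ to bound the left coefficients. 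If that fails, I would try a second route: show that $R$ is left and right primitive directly. Pick a maximal right ideal not containing $L$. Such an ideal exists if some simple right module is not annihilated by $L$. By the first step, every simple module is annihilated by $L$ exactly when $L\subset \J(R)$. So the real task is to rule out the situation where every simple module is finite and killed by $L$.

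The main obstacle is therefore this last point: excluding an infinite prime ring whose simple modules are all finite fields killed by the idempotent ideal $L=L^2$. I expect the decisive tool to be the PI result, Theorem~\ref{PI case}. If $L\subset\J(R)$ and $\J(R)^m\subset L$, then I would try to show that $R$ embeds in a product of the finite rings $R/M^k$, where $M$ runs over maximal ideals of $R$ containing $L$. Each such quotient is finite, and I would expect it to satisfy the identities of Proposition~\ref{CommonFamiliesIdentities}, since its images are commutative or of third-class type. If the embedding holds, then $R$ satisfies a polynomial identity and is therefore finite by Theorem~\ref{PI case}, a contradiction. Proving that this intersection is zero, or equivalently that $\bigcap_k \J(R)^k$ does not contain $L$, is the step I expect to require the most work.
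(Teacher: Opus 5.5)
Your first step is fine: if $\J(R)\neq 0$ then $L\subset\J(R)$, every simple right $R$-module is a finite residue field of $\overline{R}=R/L$, and $\J(R)^m\subset L$. But the proposal stops exactly where the content of the theorem lies, and the mechanism you offer for the decisive step is circular. Since $L=L^2\subset\J(R)$, you have $L=L^k\subset\J(R)^k$ for every $k$. So showing that $\bigcap_k\J(R)^k$ does not contain $L$ is the same as showing $L\not\subset\J(R)$, which is your original goal. Likewise, every maximal ideal $M$ contains $L$, hence $M^k\supset L^k=L$. The map $R\to\prod_{M,k}R/M^k$ therefore kills $L\neq 0$ and factors through the commutative ring $R/L$. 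Its injectivity is not a route to the contradiction, it \emph{is} the contradiction, and Theorem~\ref{PI case} does no work there. The finite-generation route is only sketched. Moving $x,y$ past $c=[x,y]$ is circular, because the error terms are again arbitrary elements of $L=RcR$. The ``second route'' (a maximal right ideal not containing $L$) is just a restatement of $\J(R)=0$.

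In fact nothing you extract can give a contradiction by itself. Let $L$ be a simple nil algebra over the countable field $F_p$, which exists by a theorem of Smoktunowicz. The unitisation $F_p\oplus L$ is infinite, prime and not PI. It has $L=L^2=\J(F_p\oplus L)$ as its unique minimal ideal, with commutative quotient $F_p$, and its only simple module is $F_p$, killed by $L$. Yet it is not a division ring. So the hypothesis that proper subrings are commutative must be used again, in an essential way.

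The missing idea is to use the centre, as the paper does. If $0\neq c\in\Z(R)\cap L$, then $L=RcR=cR$ is a principal right ideal and Lemma~\ref{MinimalIdealFinGenThenDivisionRing} applies. Otherwise $\Z(R)\oplus L$ is a subring. It is noncommutative, since otherwise $[[X_1,X_2],[X_3,X_4]]$ would be an identity of $R$, contradicting Theorem~\ref{PI case}; hence $R=\Z(R)\oplus L$. Now take noncommuting $r_1,r_2\in L$. The subring $\Z(R)\oplus(r_1R+r_2R)$ is noncommutative. It is proper because $L_R$ is not finitely generated (Lemma~\ref{MinimalIdealFinGenThenDivisionRing} again), which contradicts minimality. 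This argument never passes through $\J(R)$.
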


\begin{proof}
Let $R$ be an infinite minimal noncommutative ring. Suppose that $R$ is not a division ring. We seek a contradiction. First, we prove that the minimal ideal generated by the commutator $[x,y]$ intersects trivially the centre of $R$. If $\Z(R)\cap ([x,y])$ contains a nonzero element $c$ then by Proposition~\ref{A22Jul22} we have $([x,y])=RcR=cR$. In particular, $([x,y])$ is a principal right ideal which is impossible by Lemma~\ref{MinimalIdealFinGenThenDivisionRing} as $R$ is not a division ring.

Thus $\Z(R)\cap([x,y])=0$. Note that the direct sum $\Z(R)\oplus ([x,y])$ forms a subring of $R$. If it is commutative, then every pair of elements of $([x,y])=[R,R]$ commutes, and in particular any two commutators commute. Thus $R$ satisfies the identity $\big[[X_1,X_2],[X_3,X_4]\big]$, which is impossible as $R$ is not PI by Theorem~\ref{PI case}.

Therefore $\Z(R)\oplus ([x,y])$ is a noncommutative subring of $R$. Since $R$ is minimal noncommutative this implies that $R=Z(R)\oplus ([x,y])$. So we can take two noncommuting elements $r_1,r_2$ in the ideal $([x,y])$. By Lemma~\ref{MinimalIdealFinGenThenDivisionRing}, the right module $([x,y])_R$ is not finitely generated and hence the right ideal $r_1R+r_2R$ is a proper submodule of $([x,y])_R$. Thus $\Z(R)\oplus \left(r_1R+r_2R\right)$ forms a proper subring of $R=\Z(R)\oplus ([x,y])$. It must be commutative by minimality of $R$, which is absurd as we chose noncommuting elements $r_1$ and $r_2$. This is the desired contradiction, hence $R$ must be a division ring.
\end{proof}

From now on, rather than `infinite minimal noncommutative ring' we can then say minimal noncommutative division ring (the infinite cardinality is implied by Wedderburn's little theorem that a finite division ring is a field). We reformulate Problem~\ref{ProblemInfiniteMinimalNoncommutative} accordingly.

\begin{problem}\label{ProblemMinimalNoncommutativeDivision}
Does there exist a minimal noncommutative division ring?\end{problem}

We will see that if such a division algebra exists, it exhibits very strange properties. In fact, Problem~\ref{ProblemMinimalNoncommutativeDivision} is a special case of several longstanding conjectures.

\begin{remark}[Barbaumov's problem]\label{RemarkBarbaumov}
The following question is listed in the Dniester Notebook of unsolved problems in (associative and nonassociative) ring theory, where it is attributed to Barbaumov \cite[Problem~1.14]{DniesterNotebook}. Does there exist a division algebra, infinite dimensional over its centre, in which all proper subalgebras are PI? In the special case when `PI' is replaced with `commutative', this is Problem~\ref{ProblemMinimalNoncommutativeDivision}.
\end{remark}

\begin{remark}[Latyshev problem]\label{RemarkLatyshev}
Whether an infinite division algebra can be finitely generated as a ring (i.e. as a $\mathbb{Z}$-algebra) is an open problem (see e.g. \cite[Problem~1.171]{DniesterNotebook}), sometimes referred to as the Latyshev problem. A division ring is minimal noncommutative if and only if it is generated as a ring by any pair of noncommuting elements. Thus, the existence of a minimal noncommutative division ring would yield a most extreme answer to the Latyshev problem.
\end{remark}

Though there are no known examples of infinite division algebras finitely generated over $\mathbb{Z}$, a better understood phenomenon is division rings affine over subfields. However, it is unknown whether such a division ring can be infinite dimensional. In \cite[p.412]{SkewFieldsCohn}, Cohn raises the question of whether a division ring finitely generated as an algebra over a subfield $L$ is necessarily algebraic over $L$. We will see that the existence of a minimal noncommutative division ring would give a very pathological negative answer to Cohn's problem.

\begin{lemma}\label{MaximalSubfields}
Let $R$ be a minimal noncommutative division ring with centre $\Z(R)=K$. Then
\begin{itemize}[topsep=0pt]
\item[(1)] The centraliser $C_R(a)$ of any noncentral element $a$ of $R$ is a maximal subfield of $R$.
\item[(2)] If $L$ is a maximal subfield of $R$ then $L=\C_R(a)$ for any noncentral element $a$ in $L$.
\item[(3)] If $L$ is a maximal subfield of $R$ and $S$ is any proper subring of $R$ then either $S$ is contained in $L$ or $L\cap S$ is contained in $K$.
\item[(4)] If $L$ is a maximal subfield of $R$ then $\dim_K L=\dim\left(R_L\right)=\dim\left({}_{L}R\right)=\infty$.
\end{itemize}
\end{lemma}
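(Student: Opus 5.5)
We prove the four parts in order, leaning on one elementary observation. In a minimal noncommutative division ring $R$, any subring containing two noncommuting elements is all of $R$. Equivalently, every proper subring of $R$ is commutative.

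For (1), let $a$ be noncentral. The centraliser $\C_R(a)$ is a division subring of $R$, and it is proper because $a\notin K$. Hence $\C_R(a)$ is commutative, i.e. a subfield, and it contains $a$. Now let $L\supseteq \C_R(a)$ be any subfield. Then $L$ is commutative and contains $a$, so $L\subseteq \C_R(a)$. Thus $\C_R(a)$ is a maximal subfield.

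For (2), let $a\in L$ be noncentral. Since $L$ is commutative, $L\subseteq \C_R(a)$. By (1), $\C_R(a)$ is a subfield, so maximality of $L$ gives $L=\C_R(a)$.

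For (3), let $S$ be a proper subring. Then $S$ is commutative. Suppose $L\cap S\not\subseteq K$, and pick a noncentral $a\in L\cap S$. Every element of $S$ commutes with $a$, so $S\subseteq \C_R(a)=L$ by (2).

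Part (4) is the real content, and we expect it to be the main obstacle. Suppose first that $R_L$ is finite dimensional, say $\dim(R_L)=d$. The natural route is a double centraliser style argument. Right multiplication by $L$ together with left multiplication by $R$ makes $R$ a faithful module over $R\otimes_K L^{\mathrm{op}}$. Since $L=\C_R(L)$ is maximal commutative, standard results then force $R$ to be finite dimensional over $K$. A cleaner alternative: if $\dim(R_L)<\infty$, then $R$ is left algebraic over $L$, and a division ring finite dimensional as a one-sided vector space over a subfield is PI. That would contradict Theorem~\ref{PI case}, since $R$ is infinite and non-PI.

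Concretely, we would invoke the known fact that if a division ring $D$ is finite dimensional as a right vector space over a subfield $L$, then $D$ is finite dimensional over its centre (a theorem of Jacobson/Cohn on one-sided dimension over a commutative subfield). Then $R$ is PI, which is impossible by Theorem~\ref{PI case}. The same argument applies to ${}_LR$.

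It remains to show $\dim_K L=\infty$, and we argue by contradiction. If $\dim_K L=m<\infty$, take a noncentral $a\in L$, so that $L=\C_R(a)$ and $a$ is algebraic over $K$. For any $b\notin L$, consider the $K$-subalgebra generated by $a$ and $b$. It is all of $R$, since $a$ and $b$ do not commute. Here we would like to use a result that a division ring with a maximal subfield finite over the centre is finite dimensional over its centre. That holds in the centrally finite setting via double centralisers, and for general division rings it is again the one-sided dimension theorem: $[\C_R(a):K]<\infty$ for $a$ algebraic implies $[R:K]<\infty$, by the theorem that $[R:K]=[\C_R(K(a)):K]\cdot[K(a):K]$ when $K(a)$ is finite over $K$ (a Brauer/Cartan–Jacobson type result). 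Under either route we again get $R$ PI, contradicting Theorem~\ref{PI case}.

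So the plan reduces (4) to citing a suitable double centraliser or one-sided dimension theorem for division rings, and then applying Theorem~\ref{PI case}. Choosing and verifying the correct citation is where the work lies.
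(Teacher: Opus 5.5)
Your proof is correct and follows the paper's own argument. Parts (1)--(3) match it step for step. For (4), the paper likewise just invokes the standard fact that finiteness of any of the three dimensions forces $\dim_K R<\infty$, so that $R$ would be PI, contradicting Theorem~\ref{PI case}.

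The citation you worry about is in fact cheap. If $\dim(R_L)=d<\infty$, then left multiplication embeds $R$ into $\operatorname{End}(R_L)\cong M_d(L)$, which is PI, and the case ${}_LR$ is symmetric. If $\dim_K L<\infty$, the double centraliser theorem together with $\C_R(L)=L$ makes $R$ finite dimensional over $L$, which reduces to the previous case.
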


\begin{proof}
To prove (1), take $a$ a noncentral element of $R$. It is well known and easily checked that the centraliser $C_R(a)$ is a division ring. Since $a$ is not central in $R$, its centraliser $C_R(a)$ is properly contained in $R$. Then, by minimality of $R$, the division ring $C_R(a)$ is a field. It remains to show it is maximal as a subfield of $R$. By definition, any subring $B$ of $R$ properly containing $\C_R(a)$ will contain both $a$ and an element that does not commute with $a$. Hence $B$ must be noncommutative and therefore equal to $R$ by minimality of $R$. It follows that $\C_R(a)$ is a maximal subfield of $R$.

For (2), the maximal subfield $L$ has to contain a noncentral element $a$. Since $L$ is commutative, it is a subfield of $\C_R(a)$. If the inclusion is proper, then $C_R(a)=R$ which is impossible as $a$ is not central. Therefore $L=C_R(a)$.

For (3), suppose that $L\cap S$ contains a noncentral element $a$. Since $S$ is commutative, it is contained in $\C_R(a)$. But $\C_R(a)=L$ by (2).

(4) It is well known that if one of the dimensions is $d<\infty$ then they are all equal to $d$ and $\dim_K R=d^2$. In particular $R$ is then PI which is impossible by Theorem~\ref{PI case}.
\end{proof}

\begin{proposition}\label{AlgebraicIsCentral}
Let $R$ be a minimal noncommutative division ring. Then no noncentral element of $R$ is algebraic over the centre of $R$.
\end{proposition}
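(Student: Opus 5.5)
Suppose $a\in R$ is noncentral and algebraic over $K=\Z(R)$; we aim for a contradiction with Lemma~\ref{MaximalSubfields}(4). The natural object is the subfield $K(a)=K[a]$, which is finite dimensional over $K$ because $a$ is algebraic. Since $a$ is noncentral, $K(a)$ is a commutative subring containing the noncentral element $a$, so it lies in $\C_R(a)=:L$, which is a maximal subfield by Lemma~\ref{MaximalSubfields}(1), and $\dim_K L=\infty$ by Lemma~\ref{MaximalSubfields}(4). The goal is to show that $L$ is nevertheless finite dimensional over $K$, or else to produce a proper noncommutative subring.

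The plan is to use the centraliser theorem. The subring $K(a)$ is a simple $K$-subalgebra of $R$ of finite dimension $d=[K(a):K]$. By the double centraliser theorem for finite dimensional simple subalgebras of a (possibly infinite dimensional) central simple algebra, specifically a division algebra, we have $\C_R(\C_R(K(a)))=K(a)$, and $R$ has dimension $d$ as a left (or right) vector space over $\C_R(K(a))=L$. Since $a\in L$ is noncentral, we also have $\C_R(L)=L$ by maximality. The double centraliser property then gives $L=\C_R(\C_R(K(a)))=K(a)$. Hence $L$ is finite dimensional over $K$, and $\dim(R_L)=d<\infty$, which contradicts Lemma~\ref{MaximalSubfields}(4).

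The key step, and the expected obstacle, is justifying $\dim(R_L)=[K(a):K]$ when $R$ is infinite dimensional over $K$. We would argue this via $R\otimes_K K(a)^{\mathrm{op}}$ acting on $R$. The ring $R$ is a left module over $R\otimes_K K(a)$, which is simple Artinian over $R$ because $K(a)$ is a finite separable-or-not field extension of the centre. The tensor product $D\otimes_K F$ of a central division algebra with a finite field extension is simple, and it is finite dimensional over $D$, hence isomorphic to $M_r(D')$. The module $R$ is then a direct sum of copies of the simple module, and its endomorphism ring $\C_R(K(a))$ is identified with $D'^{\mathrm{op}}$ with the right dimension count. Equivalently, we can appeal directly to the standard infinite-dimensional centraliser theorem (Cohn, Skew Fields, Theorem 3.3.x), which states exactly $[R:\C_R(F)]=[F:K]$ and $\C_R\C_R(F)=F$ for a finite dimensional subfield $F\supseteq K$ of a division ring with centre $K$. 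I would cite it, with the sketch above as backup. With that in hand, the contradiction $\dim(R_L)<\infty$ finishes the proof. The result shows that $\dim_K L=\infty$ forces every noncentral element to be transcendental.
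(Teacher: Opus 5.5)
Your proof is correct and follows essentially the same route as the paper. Both arguments identify $\C_R(K[a])=\C_R(a)$ as a maximal subfield equal to its own centraliser, then apply the double centraliser theorem for finite-dimensional subfields of a division ring (the paper cites Lam, Theorem~15.4) to get $\C_R(a)=K[a]$, and finally contradict Lemma~\ref{MaximalSubfields}(4) by finite dimensionality; your extra observation that $\dim(R_L)=[K(a):K]<\infty$ gives the same contradiction through another of that lemma's dimensions, but it is not needed.
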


\begin{proof}
Denote by $K$ the centre of $R$. Suppose there is an element $a$ in $R\setminus K$ which is algebraic over $K$. Then $K[a]$ is a finite field extension of $K$. Since $K$ is central, we have $\C_R(K[a])=\C_R(a)$, which is a maximal subfield of $R$ by Lemma~\ref{MaximalSubfields}~(1). In particular $\C_R(a)=\C_R\left(\C_R(a)\right)$. But by the Double Centraliser Theorem (see e.g. \cite[Theorem~15.4]{FirstCourseLam}), it follows that
\[K[a]=\C_R(\C_R(K[a]))=\C_R(\C_R(a))=\C_R(a).\]
Therefore, $K[a]$ is a maximal subfield of $R$. This is impossible by Lemma~\ref{MaximalSubfields}~(4) since $K[a]$ is finite dimensional over $K$.
\end{proof}

So a minimal noncommutative division ring would give an example of a division algebra generated by only $2$ elements as an algebra over its centre, but without a single algebraic element.

\begin{definition}
Let $L$ be a subfield of a division ring $R$, and let $L^\ast$ and $R^\ast$ denote their respective multiplicative groups. Then $L$ is called \textit{self-invariant} if the normaliser $\N_{R^\ast}(L^{\ast})$ coincides with $L^\ast$.
\end{definition}

In \cite[Question~1.2]{Self-InvariantMaximalSubfields}, it is asked whether a division ring whose maximal subfields are all self-invariant must be commutative. This question is motivated by its connection with Albert's conjecture and a weak version of the Kurosh problem for division rings. The authors went on to construct a counterexample in \cite[Theorem~4.4]{Self-InvariantMaximalSubfields}, and the question is still open in the centrally finite case. We will show that again, a minimal noncommutative division ring would exhibit pathological behaviour, and constitute another counterexample.

\begin{lemma}\label{VectorSpaceOverMaximalSubfield}
Let $R$ be a minimal noncommutative division ring, and $L$ be a noncentral subfield of $R$. Then for any element $a$ in $R\setminus C_R(L)$, the $L$-vector space $L+La+La^2+\ldots$ is not closed under multiplication, i.e. it does not form a subring of $R$.
\end{lemma}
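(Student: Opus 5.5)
Suppose, for a contradiction, that $S\coloneqq L+La+La^2+\cdots$ is closed under multiplication. It contains $1$, so it is a subring of $R$ containing both $L$ and $a$.

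First we show that $S$ is noncommutative. Since $a\notin C_R(L)$, there is some $\ell\in L$ with $\ell a\neq a\ell$. Both $\ell$ and $a$ lie in $S$, so $S$ is not commutative. Minimality of $R$ then forces $S=R$.

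It follows that $R=\sum_{i\geq 0}La^i$ is spanned as a left $L$-vector space by the powers of $a$. We now derive a contradiction from this, using the standing facts about $R$. By Proposition~\ref{AlgebraicIsCentral}, $a$ is noncentral (because $a\notin C_R(L)$), so $a$ is transcendental over $K=\Z(R)$.

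\textbf{Main step.} The key observation is that $a^{-1}\in R=\sum La^i$. Hence there is an equation
\[a^{-1}=\sum_{i=0}^{m}\ell_i a^i,\qquad \ell_i\in L,\]
which after right multiplication by $a$ becomes
\[1=\sum_{i=0}^m \ell_i a^{i+1}.\]
This says that $a$ satisfies a nontrivial "left $L$-polynomial" relation. We want to turn this into something contradicting either the transcendence of $a$ or the infinite dimensionality in Lemma~\ref{MaximalSubfields}~(4).

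\textbf{Route one: finite dimensionality.} Set $V\coloneqq\sum_{i=0}^{m}La^i$. We aim to show that $V$ is closed under right multiplication by $a$; then by induction $R=V$, so $\dim({}_LR)<\infty$. This would contradict Lemma~\ref{MaximalSubfields}~(4), after first enlarging $L$ to a maximal subfield $L'\supseteq L$. For that enlargement, note that $L$ is noncentral, so $L\subset C_R(b)=L'$ for any noncentral $b\in L$, and $L'$ is maximal by Lemma~\ref{MaximalSubfields}~(1). Also $\sum L'a^i$ contains $\sum La^i=R$, so the argument can be run with $L'$.

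The difficulty with route one is that the relation $1=\sum\ell_ia^{i+1}$ is linear in the left $L$-coefficients, and it does not obviously express $a^{m+1}$ in terms of lower powers. The leading coefficient $\ell_m$ is invertible, but $a^{m+1}$ appears multiplied on the left by $\ell_m$. So we would need to isolate the top term, for instance via $\ell_m^{-1}$, and then use that $a^{m+1}\in V$ to get $a^{m+2}=a\cdot a^{m+1}$. That last step requires commuting $a$ past elements of $L$, which is exactly what is unavailable.

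\textbf{Route two: commutation relations.} As an alternative, we use closure of $S$ under products of the form $a\ell$ with $\ell\in L$. Each such product lies in $\sum La^i$, so we can write
\[a\ell=\sum_i\sigma_i(\ell)a^i.\]
This is reminiscent of a skew polynomial (Ore) structure. We would then show that the degree must be $\le 1$ by comparing degrees of products. This gives $a\ell=\sigma(\ell)a+\delta(\ell)$, so $R$ is a quotient of an Ore extension $L[a;\sigma,\delta]$. Since $R$ is a division ring generated by $a$ over $L$ and $a$ is not algebraic, $R$ would then be the Ore localisation, the skew field of fractions of $L[a;\sigma,\delta]$. But the quotient of a polynomial-type ring $\sum La^i$ is not itself of this form unless $a^{-1}\in\sum La^i$. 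Degree considerations in the Ore domain show that $a^{-1}\in\sum La^i$ is impossible when $a$ is transcendental, i.e.\ when the map $L[a;\sigma,\delta]\to R$ is injective. If instead that map has a kernel, then $a$ satisfies a polynomial relation, so $R$ is finite dimensional over $L$, contradicting Lemma~\ref{MaximalSubfields}~(4) as in route one.

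\textbf{Expected obstacle.} The main obstacle is the step that forces the commutation relation to have degree $\le 1$, or in some other way makes degrees behave well. The first thing to try is a leading-term argument on $a\ell_1\ell_2=(a\ell_1)\ell_2$, i.e.\ comparing the top-degree parts of $\sigma(\ell_1\ell_2)$ and $\sigma(\ell_1)\sigma(\ell_2)$.
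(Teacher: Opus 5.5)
Your proposal never reaches a contradiction. Route one is abandoned at exactly the point where it works. Route two depends on the claim that the commutation rule $a\ell=\sum_i\sigma_i(\ell)a^i$ has degree at most $1$, which you leave open yourself, along with unproved assertions about the Ore localisation. So as written there is a gap at the main step.

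The obstacle you identify in Route one is not real. You correctly set out to show that $V=\sum_{i=0}^{m}La^i$ is closed under \emph{right} multiplication by $a$. You then wrote $a^{m+2}=a\cdot a^{m+1}$, which is left multiplication; use $a^{m+2}=a^{m+1}\cdot a$ instead.

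Since $a^{-1}\neq0$, not all $\ell_i$ vanish, so take $m$ maximal with $\ell_m\neq0$. Left-multiplying $1=\sum_{i=0}^{m}\ell_ia^{i+1}$ by $\ell_m^{-1}$ gives $a^{m+1}=\ell_m^{-1}-\sum_{i=0}^{m-1}\ell_m^{-1}\ell_ia^{i+1}\in V$. Then for any $v=\sum_{i=0}^{m}\beta_ia^i$ in $V$,
\[va=\sum_{i=0}^{m-1}\beta_ia^{i+1}+\beta_ma^{m+1}\in V+\beta_mV=V.\]
No element of $L$ ever has to move past $a$, because the coefficients stay on the left.

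By induction every power $a^k$ lies in $V$, so $R=S=V$ is spanned on the left over $L$ by $1,a,\ldots,a^m$. It is then also finitely spanned on the left over the maximal subfield $L'=C_R(b)\supseteq L$ you described, contradicting Lemma~\ref{MaximalSubfields}~(4). This is precisely the paper's argument. The transcendence of $a$ from Proposition~\ref{AlgebraicIsCentral} and the Ore-extension machinery of Route two are not needed.
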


\begin{proof}
Suppose that $S\coloneqq L+La+La^2+\ldots$ forms a subring of $R$. We seek a contradiction. Since $a$ does not commute with $L$, the subring $S$ is noncommutative and is therefore equal to $R$ by minimality of $R$. In particular, $a^{-1}$ belongs to $S=R$, i.e. there are elements $\alpha_0,\ldots,\alpha_n$ in $L$ such that
\[a^{-1}=\alpha_0+\alpha_1a+\ldots+\alpha_na^n.\]
Thus $a$ is left algebraic over $L$ as
\[0=-\alpha_n^{-1}+\alpha_n^{-1}\alpha_0a+\alpha_n^{-1}\alpha_1a^2+\ldots+a^{n+1}\]
and it follows that $R=L+La+\ldots+La^n$ has $\dim\left({}_{L}R\right)=n+1$. In particular, $R$ is finite dimensional as a left vector space over the maximal subfield $C_R(L)$, which is impossible by Lemma~\ref{MaximalSubfields}~(4).
\end{proof}

\begin{proposition}\label{MaximalSubfieldsNormal}
Let $R$ be a minimal noncommutative division ring. Then every maximal subfield of $R$ is self-invariant.
\end{proposition}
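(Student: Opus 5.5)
Let $L$ be a maximal subfield of $R$ and suppose, aiming for a contradiction, that some $a\in\N_{R^\ast}(L^\ast)$ does not lie in $L^\ast$. We will show that the $L$-vector space $L+La+La^2+\cdots$ is then closed under multiplication, which contradicts Lemma~\ref{VectorSpaceOverMaximalSubfield}.

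First we check that Lemma~\ref{VectorSpaceOverMaximalSubfield} applies. The field $L$ is noncentral, since a maximal subfield equal to the centre $K$ would force $R$ to be commutative: any $b\notin K$ generates with $K$ a commutative subring $K[b]$ strictly larger than $K$. By Lemma~\ref{MaximalSubfields}~(2), $L=\C_R(c)$ for any noncentral $c\in L$, and hence $\C_R(L)=L$. Indeed, $\C_R(L)\subset\C_R(c)=L$, and $L\subset\C_R(L)$ because $L$ is commutative. Since $a\notin L=\C_R(L)$, the element $a$ is in $R\setminus\C_R(L)$, as the lemma requires.

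Next we use normality. Because $aL^\ast a^{-1}=L^\ast$, conjugation $\sigma(\ell)=a\ell a^{-1}$ restricts to a ring automorphism of $L$ (it fixes $0$ and is additive and multiplicative). This gives the twisting rule $a\ell=\sigma(\ell)a$, and by induction $a^j\ell=\sigma^j(\ell)a^j$ for all $j\geq 0$. For a product of two elements of the space, we then have
\[(\ell a^i)(\ell' a^j)=\ell\,\sigma^i(\ell')\,a^{i+j}\in La^{i+j}.\]
Since $1\in L$ and the space is closed under addition, $S\coloneqq L+La+La^2+\cdots$ is a subring of $R$. This contradicts Lemma~\ref{VectorSpaceOverMaximalSubfield}, so $\N_{R^\ast}(L^\ast)=L^\ast$.

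The only point needing care is that $\sigma$ maps $L$ onto $L$. This follows directly from the normaliser condition $aL^\ast a^{-1}=L^\ast$, with $0$ handled separately. The remainder is a routine computation.
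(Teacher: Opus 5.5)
Your proof is correct and follows the paper's route: normality gives $aL=La$, so $L+La+La^2+\cdots$ is a subring, which Lemma~\ref{VectorSpaceOverMaximalSubfield} rules out unless $a\in\C_R(L)$. You also spell out two points the paper leaves implicit, namely that $L$ is noncentral and that $\C_R(L)=L$ by Lemma~\ref{MaximalSubfields}~(2).
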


\begin{proof}
Let $L$ be a maximal subfield of $R$. Take an arbitrary element $a$ in the normaliser $\N_{R^\ast} (L^\ast)$. Then $aL= La$ and it follows that the set $L+La+La^2+\ldots$ is closed under multiplication, hence it forms a subring of $R$. By Lemma~\ref{VectorSpaceOverMaximalSubfield}, this is only possible if $a$ belongs to $L$. Since $a$ was arbitrary in $\N_{R^\ast}(L^\ast)$, we have shown that $\N_{R^\ast}(L^\ast)=L^\ast$, i.e. the subfield $L$ is self-invariant.
\end{proof}

Therefore, for any maximal subfield $L$ and $a\in R\setminus L$, there exists $b\in L$ such that $aba^{-1}\not\in L$. In fact, this is the case for every noncentral $b$ in $L$.

\begin{corollary}\label{ConjugacyMovesSubfields}
Let $R$ be a minimal noncommutative division ring and $L$ be a maximal subfield of $R$. For any elements $a\in R\setminus L$ and $b\in L$, the following are equivalent.
\begin{enumerate}[topsep=0pt]
\item[(i)] $aba^{-1}$ belongs to $L$;
\item[(ii)] $aba^{-1}$ commutes with $b$;
\item[(iii)] $b$ is central in $R$.
\end{enumerate}
\end{corollary}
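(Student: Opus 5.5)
We prove the cycle of implications (iii)$\Rightarrow$(i)$\Rightarrow$(ii)$\Rightarrow$(iii). Throughout, $K$ denotes the centre of $R$, and we use Lemma~\ref{MaximalSubfields} together with Proposition~\ref{MaximalSubfieldsNormal}.

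The implication (iii)$\Rightarrow$(i) is immediate. If $b$ is central, then $aba^{-1}=b\in L$.

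For (i)$\Rightarrow$(ii), note that $aba^{-1}$ and $b$ both lie in the field $L$. Since $L$ is commutative, they commute.

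The substantive step is (ii)$\Rightarrow$(iii). Suppose $aba^{-1}$ commutes with $b$, and suppose for contradiction that $b$ is not central.
\begin{itemize}[topsep=0pt]
\item By Lemma~\ref{MaximalSubfields}~(2), $L=\C_R(b)$.
\item The hypothesis says $aba^{-1}\in\C_R(b)=L$.
\item Conjugation by $a$ is an automorphism of $R$, so $a\C_R(b)a^{-1}=\C_R(aba^{-1})$. Since $aba^{-1}$ is also noncentral, Lemma~\ref{MaximalSubfields}~(1) says $\C_R(aba^{-1})$ is a maximal subfield.
\item The element $aba^{-1}$ is a noncentral element of the maximal subfield $L$. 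Lemma~\ref{MaximalSubfields}~(2) applied to $L$ and $aba^{-1}$ therefore gives $\C_R(aba^{-1})=L$.
\item Combining these, $aLa^{-1}=a\C_R(b)a^{-1}=\C_R(aba^{-1})=L$. Hence $a$ normalises $L^\ast$.
\item Proposition~\ref{MaximalSubfieldsNormal} now gives $a\in L$, contradicting the assumption $a\in R\setminus L$.
\end{itemize}
So $b$ must be central.

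The main point needing care is the identity $\C_R(aba^{-1})=L$. It needs both parts of Lemma~\ref{MaximalSubfields}~(2): $L$ is the centraliser of \emph{every} noncentral element it contains, and conjugation preserves noncentrality. Once this is in place, self-invariance finishes the argument.
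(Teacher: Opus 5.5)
Your proof is correct and is essentially the paper's argument: both reduce to self-invariance (Proposition~\ref{MaximalSubfieldsNormal}), using the fact that a noncentral element $aba^{-1}$ of $L\cap aLa^{-1}$ forces $aLa^{-1}=L$. The difference is only bookkeeping. The paper runs the cycle (iii)$\Rightarrow$(ii)$\Rightarrow$(i)$\Rightarrow$(iii) and gets the last step from Lemma~\ref{MaximalSubfields}(3) with $S=aLa^{-1}$, whereas you compute $aLa^{-1}=\C_R(aba^{-1})=L$ directly from Lemma~\ref{MaximalSubfields}(2). That gives equality outright, instead of leaving implicit that $aLa^{-1}\subset L$ forces equality; your appeal to Lemma~\ref{MaximalSubfields}(1) is not needed.
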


\begin{proof}
That (iii) implies (ii) is trivial. For (ii) implies (i), we consider two cases. First, if $b$ is central then $aba^{-1}=b$ and both (ii) and (i) are always true. In the case that $b$ is not central then, by Lemma~\ref{MaximalSubfields}~(2), we have $L=C_R(b)$ and therefore (ii) implies (i). Finally, we prove that (i) implies (iii). Since $a$ does not belong to $L$, we know by Proposition~\ref{MaximalSubfieldsNormal} that $aLa^{-1}\neq L$. Moreover, it is easily checked that $aLa^{-1}$ forms a proper subring of $R$. Then, by Lemma~\ref{MaximalSubfields}~(3), the intersection $aLa^{-1}\cap L$ is contained in $\Z(R)$. In particular, $aba^{-1}=z$ for some central element $z\in\Z(R)$. Then $ab=za=az$ and hence $b=z$ as required.
\end{proof}

In other words, for noncommuting elements $a,b\in R$, the conjugate $aba^{-1}$ never commutes with $b$. Of course, it does not commute with $a$ either (this is true in general in any group).

\begin{corollary}
Let $R$ be a minimal noncommutative division ring. For any two elements $a$ and $b$ in $R$, either $a$ and $b$ commute or
\begin{itemize}[topsep=0pt]
\item[(1)] the commutator $[a,b]$ commutes with neither $a$ nor $b$, and
\item[(2)] the multiplicative commutator $aba^{-1}b^{-1}$ commutes with neither $a$ nor $b$.
\end{itemize}
\end{corollary}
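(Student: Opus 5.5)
Suppose $a$ and $b$ do not commute. Then neither is central, so by Lemma~\ref{MaximalSubfields}~(1) the centralisers $L_a\coloneqq\C_R(a)$ and $L_b\coloneqq\C_R(b)$ are maximal subfields. Moreover $a\notin L_b$ and $b\notin L_a$, and by Lemma~\ref{MaximalSubfields}~(2) we have $L_a=\C_R(c)$ for every noncentral $c\in L_a$. The plan is to reduce both claims to Corollary~\ref{ConjugacyMovesSubfields}.

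For (1), I argue by contradiction. Suppose $[a,b]$ commutes with $a$, so $ab-ba=c\in L_a$. If $c=0$ then $a$ and $b$ commute, which is excluded, so $c\neq 0$ and $c$ is invertible. Then $aba^{-1}-b=ca^{-1}$, and $ca^{-1}\in L_a$ commutes with $a$. This alone gives no contradiction, so I will use additivity instead. The relation $ab=ba+c$ with $c$ commuting with $a$ says that $\operatorname{ad}_a(b)=c$ and $\operatorname{ad}_a^2(b)=0$. Consider the set $S=L_a+L_ab$. From $ab=ba+c$ one gets $\lambda b=b\lambda+\delta(\lambda)$ for every $\lambda\in L_a$, where $\delta(\lambda)=[\lambda,b]$. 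I would then try to show that $S'=L_a[b]=L_a+L_ab+L_ab^2+\cdots$ is closed under multiplication. This needs $[\lambda,b]\in L_a$ for all $\lambda\in L_a$, and that is the \emph{main obstacle}: we only know it for $\lambda=a$.

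Here is how I would try to get around it. Consider $K[a,b]$ where $K=\Z(R)$. Since $[a,[a,b]]=0$, the ring generated by $a$ and $b$ is spanned by the monomials $b^ia^j c^k$ over $K$, using $ab=ba+c$, $ac=ca$, and a commutation rule for $bc$. The difficulty is that $bc$ is not controlled. Instead I use the centraliser: $c\in L_a$, and if $c$ is noncentral then $\C_R(c)=L_a$ by Lemma~\ref{MaximalSubfields}~(2). Conjugating, $aba^{-1}=b+ca^{-1}$. Now check Corollary~\ref{ConjugacyMovesSubfields} applied with $L=L_b$: the element $aba^{-1}$ does not commute with $b$, so $[b,ca^{-1}]\neq 0$. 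Symmetrically, apply the corollary with the roles of $a$ and $b$ reversed, using $b^{-1}ab=a-b^{-1}c$. That conjugate does not commute with $a$, so $b^{-1}c\notin L_a$. I would then combine these with Lemma~\ref{VectorSpaceOverMaximalSubfield} for $L=K(a)$ (or $L=L_a$), aiming to show that $L_a+L_ab+L_ab^2+\cdots$ is a ring. Since $\operatorname{ad}_a$ acts locally nilpotently on it, one can try an Ore-extension argument inside $\C_R(c)$, which would contradict Lemma~\ref{VectorSpaceOverMaximalSubfield}. The case $[a,b]\in L_b$ is symmetric.

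For (2), set $g=aba^{-1}b^{-1}$ and suppose $g$ commutes with $a$, so $g\in L_a$. Then $aba^{-1}=gb$, and hence $b^{-1}(aba^{-1})b^{-1}\cdots$; more usefully, $b^{-1}ab=b^{-1}(g^{-1}a\,\cdot)\ldots$. The clean route is that $g\in L_a$ gives $b a^{-1}b^{-1}=a^{-1}g$, so $bab^{-1}=g^{-1}a\in L_a$, since $L_a$ is a field containing $a$ and $g$. Corollary~\ref{ConjugacyMovesSubfields} with $L=L_a$, conjugating element $b\notin L_a$, and the element $a\in L_a$ then forces $a$ to be central, a contradiction. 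If instead $g$ commutes with $b$, then $aba^{-1}=gb\in L_b$, and the same corollary forces $b$ to be central, again a contradiction.
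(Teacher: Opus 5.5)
Your proof of (2) is correct and is the paper's argument. From $g\in\C_R(a)$ you get $bab^{-1}=g^{-1}a\in\C_R(a)$, from $g\in\C_R(b)$ you get $aba^{-1}=gb\in\C_R(b)$, and Corollary~\ref{ConjugacyMovesSubfields} then makes $a$, resp.\ $b$, central.

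Part (1), however, has a genuine gap. You correctly reduce it to showing that $L_a+L_ab+L_ab^2+\cdots$ is a subring, i.e.\ that $[b,\lambda]\in L_a$ for every $\lambda\in L_a$. You flag this as the main obstacle, but nothing you write afterwards establishes it:
\begin{itemize}
\item The facts you extract from Corollary~\ref{ConjugacyMovesSubfields} ($[b,ca^{-1}]\neq 0$ and $b^{-1}c\notin L_a$; incidentally $b^{-1}ab=a+b^{-1}c$, not $a-b^{-1}c$) are never used.
\item Switching to $L=K(a)$ would require $c\in K(a)$, which you have no way to obtain.
\item An ``Ore-extension argument inside $\C_R(c)$'' makes no sense: for noncentral $c$ one has $\C_R(c)=L_a$, which does not contain $b$.
\item The claimed local nilpotence of $\mathrm{ad}_a$ on $L_a+L_ab+\cdots$ already presupposes the missing commutation rule. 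For example, $[a,b^2]=cb+bc$, and rewriting $bc$ in that space needs $[b,c]\in L_a$.
\end{itemize}
So as it stands (1) is unproved.

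The missing idea is the Jacobi identity. Take any $\lambda\in L_a=\C_R(a)$. Both $a$ and $c=[a,b]$ lie in the field $L_a$, so $\lambda$ commutes with both, and
\[[a,[b,\lambda]]=-[\lambda,[a,b]]-[b,[\lambda,a]]=0,\]
i.e.\ $[b,\lambda]\in\C_R(a)=L_a$. Hence $bL_a\subset L_a+L_ab$, so $L_a+L_ab+L_ab^2+\cdots$ is closed under multiplication. Since $\C_R(L_a)\subset\C_R(a)=L_a$ and $b\notin L_a$, Lemma~\ref{VectorSpaceOverMaximalSubfield} with $L=L_a$ gives the contradiction. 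The case $[a,b]\in\C_R(b)$ is symmetric. This is exactly how the paper proves (1), and it completes your reduction in a few lines.
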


\begin{proof}
(1) Let $a,b$ be two noncentral elements of $R$ such that $[a,b]$ commutes with $a$. We will show that $a$ commutes with $b$. The proof in the case where $[a,b]$ commutes with $b$ is identical, with the roles of $a$ and $b$ reversed. Since $a$ is not central then by Lemma~\ref{MaximalSubfields}~(1), its centraliser $C_R(a)$ is a field. Thus any element $r$ in $\C_R(a)$ also commutes with $[a,b]$. Hence, the Jacobi identity
\[[r,[a,b]]+[a,[b,r]]+[b,[r,a]]=0\]
implies that $[a,[b,r]]=0$, i.e. $[b,r]$ also belongs to $\C_R(a)$. As $r$ was an arbitrary element of $C_R(a)$, it follows that $[b,\C_R(a)]$ is contained in $\C_R(a)$. Then $b\C_R(a)$ is contained in $\C_R(a)+ \C_R(a)b$ and therefore
\[C_R(a)+C_R(a)b+C_R(a)b^2+\ldots\]
forms a subring of $R$. By Lemma~\ref{VectorSpaceOverMaximalSubfield} this is impossible unless $b$ commutes with $a$.

(2) Let $a,b$ be two noncentral elements of $R$ such that $aba^{-1}b^{-1}$ commutes with $b$. Then the maximal subfield $C_R(b)$ also contains $(aba^{-1}b^{-1})b=aba^{-1}$. By Corollary~\ref{ConjugacyMovesSubfields}, since $b$ is not central this is impossible unless $a$ commutes with $b$ as required. The proof is the same if $aba^{-1}b^{-1}$ commutes with $a$ instead, using that $bab^{-1}$ then belongs to $C_R(a)$.
\end{proof}

\begin{remark}[Makar-Limanov conjecture]\label{RemarkMakar-Limanov}
The conjecture that is most opposed to the existence of a minimal noncommutative division ring is due to Makar-Limanov. Let $R$ be a division ring finitely generated (as a division algebra) and infinite dimensional over its centre $K$. In \cite[p.~284]{Makar-Limanov1984}, Makar-Limanov conjectured that $R$ must contain a free $K$-algebra in two generators. This was originally motivated by his proof in \cite{Makar-Limanov_DivisionWeyl} that the quotient division ring of the first Weyl algebra contains such a free algebra. In its full generality, the Makar-Limanov conjecture implies the Kurosh problem for division rings since if $R$ contains a free $K$-subalgebra, it cannot be algebraic over $K$. The conjecture has attracted a lot of interest and it has been proved to hold in many special cases, often with the assumption that $K$ is uncountable (see e.g. \cite{BellRogalski_FreeSubalgebrasDivision}).

These results do not apply to the case we are interested in, a minimal noncommutative ring being countable by Proposition~\ref{A22Jul22}. In fact, a minimal noncommutative division ring could be considered as strong a counterexample to the Makar-Limanov conjecture as one can imagine. Indeed, a minimal noncommutative division ring $R$ is an infinite dimensional finitely generated algebra over its centre. Moreover, the condition that each of its proper subrings is commutative not only means that $R$ does not contain a copy of the free algebra on two generators, but that every pair of noncommuting elements generates the whole of $R$ over $\mathbb{Z}$.
\end{remark}

\paragraph{Acknowledgements.} This work was conducted as part of the second author's PhD thesis at the University of Sheffield, supported by EPSRC grant EP/T517835/1. The second author would also like to thank Jason Bell for interesting discussions on the topic of this paper.

\begin{tabular}{l  l}
V. V. Bavula & \quad \quad\quad \quad \quad \quad Nathan Blacher \\ 
School of Mathematical and & \quad \quad\quad \quad\quad \quad School of Mathematical and\\
Physical Sciences &\quad \quad\quad \quad\quad \quad Physical Sciences\\
University of Sheffield & \quad \quad\quad \quad\quad \quad University of Sheffield\\
Hicks Building & \quad \quad\quad \quad\quad \quad Hicks Building\\
Sheffield S3 7RH & \quad \quad\quad \quad\quad \quad Sheffield S3 7RH\\
United Kingdom & \quad \quad\quad \quad\quad \quad United Kingdom\\
email: v.bavula@sheffield.ac.uk & \quad \quad\quad \quad\quad \quad email: nlblacher@gmail.com\\
\end{tabular}

\end{document}